\def\algbackskip{\hskip-\ALG@thistlm}
 \providecommand\BibTeX{{%
   \normalfont B\kern-0.5em{\scshape i\kern-0.25em b}\kern-0.8em\TeX}}}
\begin{document}
\title{Algorithmic Design and Graph-Based Classification for Rectilinear-Shaped Modules in Floor Plans}
\author{Rohit Lohani, Ravi Suthar, Krishnendra Shekhawat}
\institute{Department of Mathematics, Birla Institute of Technology and Science, Pilani, Pilani Campus, Vidya Vihar, Pilani, Rajasthan 333031, India}

\maketitle
 \begin{abstract}

This paper introduces a graph-theoretic framework for constructing floor plans that accommodate non-rectangular modules, with a focus on $L$-shaped and $T$-shaped geometries. In contrast to conventional methods that primarily address outer boundaries, the proposed approach incorporates structural constraints that arise when realizing modules with these more complex shapes. The study investigates how tools from algorithmic graph theory can be employed to embed such modules within rectangular floor-plan representations derived from triangulated graphs.\\
The analysis shows that not every triangulated graph can support the specified module geometries. To characterize feasible instances, a shape-preservation constraint is formulated, preventing the geometry of a module from being altered through boundary deformation. Such changes would either increase the combinatorial complexity of adjacent modules or disrupt the intended adjacency structure.\\
A linear-time algorithm is presented, based on a prioritized canonical ordering, that constructs $L$ and $T$-shaped modules within a floor plan. The method applies to triangulated graphs containing at least one internal $K_4$, or two internal $K_4$ subgraphs that satisfy certain existence conditions. The paper details the construction process, outlines the conditions required to realize the desired module shapes, and demonstrates how these modules can be produced within the final floor plan. The algorithm’s simplicity and direct implementability make it suitable for integration into practical layout-generation workflows. Future work includes extending the framework to additional module shapes and exploring broader classes of supporting graph structures.
\end{abstract}


\keywords{Graph Theory, Graph Algorithms, Complex Triangle, Triangulated Graph, Rectangular Floor plan, Orthogonal Floor plan}
\begin{figure}
\centering
    \includegraphics[width = 1.00 \textwidth]{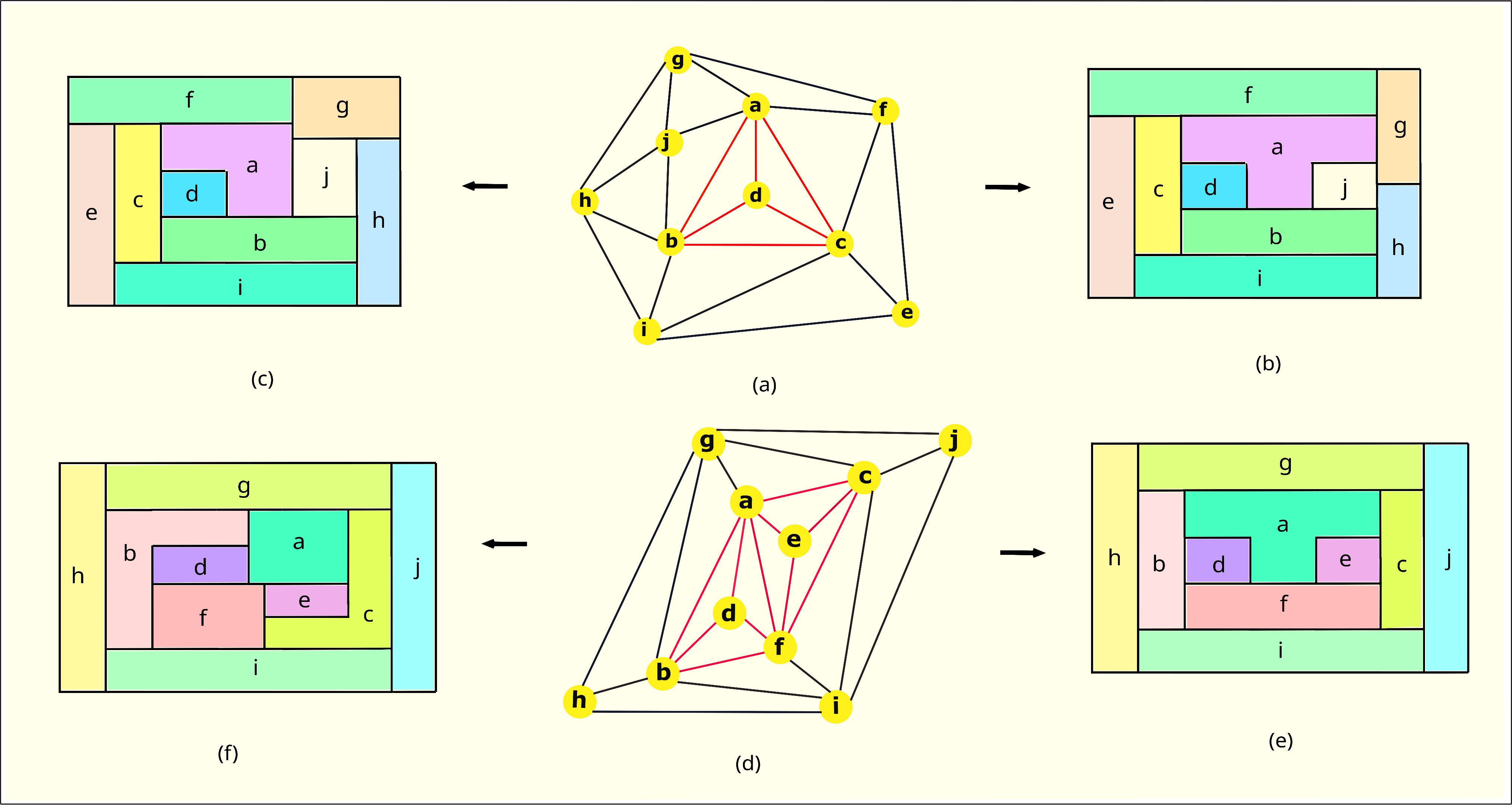}
    \caption{(a) Input graph $G_L$ containing $K_4$ (i.e., $K_L$). (b,c) Floor plans incorporating $L$ and $T$-shaped modules corresponding to $G_L$. (d) Input graph $G_T$ containing two $K_4$ sharing a common edge (i.e., $K_T$). (e,f) Floor plans containing a single $T$-shaped and two $L$-shaped modules corresponding to $G_T$.}
    \label{L-13}
\end{figure}

\section{Introduction}
\begin{figure}[H]
\centering
    \includegraphics[width = 1.05 \textwidth]{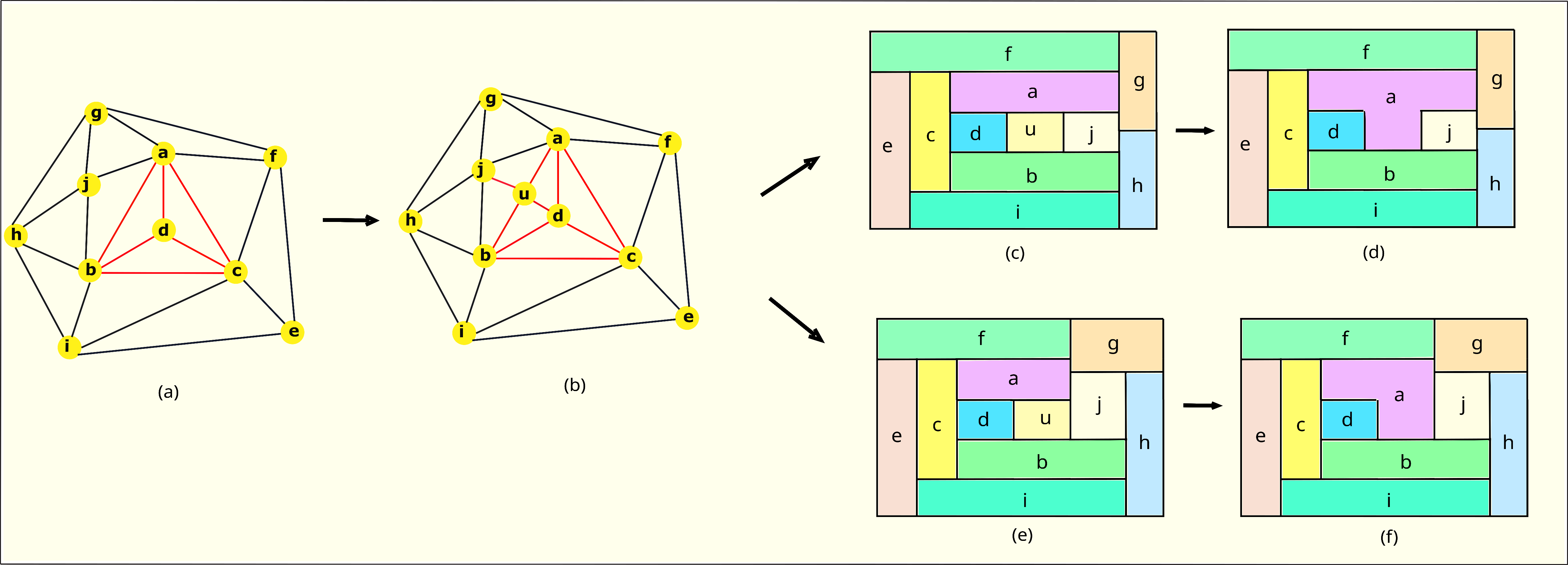}
    \caption{(a) Input graph $G_L$ containing $K_4$ (i.e., $K_L$). (b) Modified graph derived from $G_L$ after eliminating the complex triangle by adding an additional node $u$. (c,e) Obtained the rectangular floor plan for the modified graph. (d,f) Merging module $u$ to module $a$ to obtain a $L$-shaped module and a $T$-shaped module in the floor plans.}
    \label{L-14}
\end{figure}
\begin{figure}[H]
\centering
    \includegraphics[width = 1.05 \textwidth]{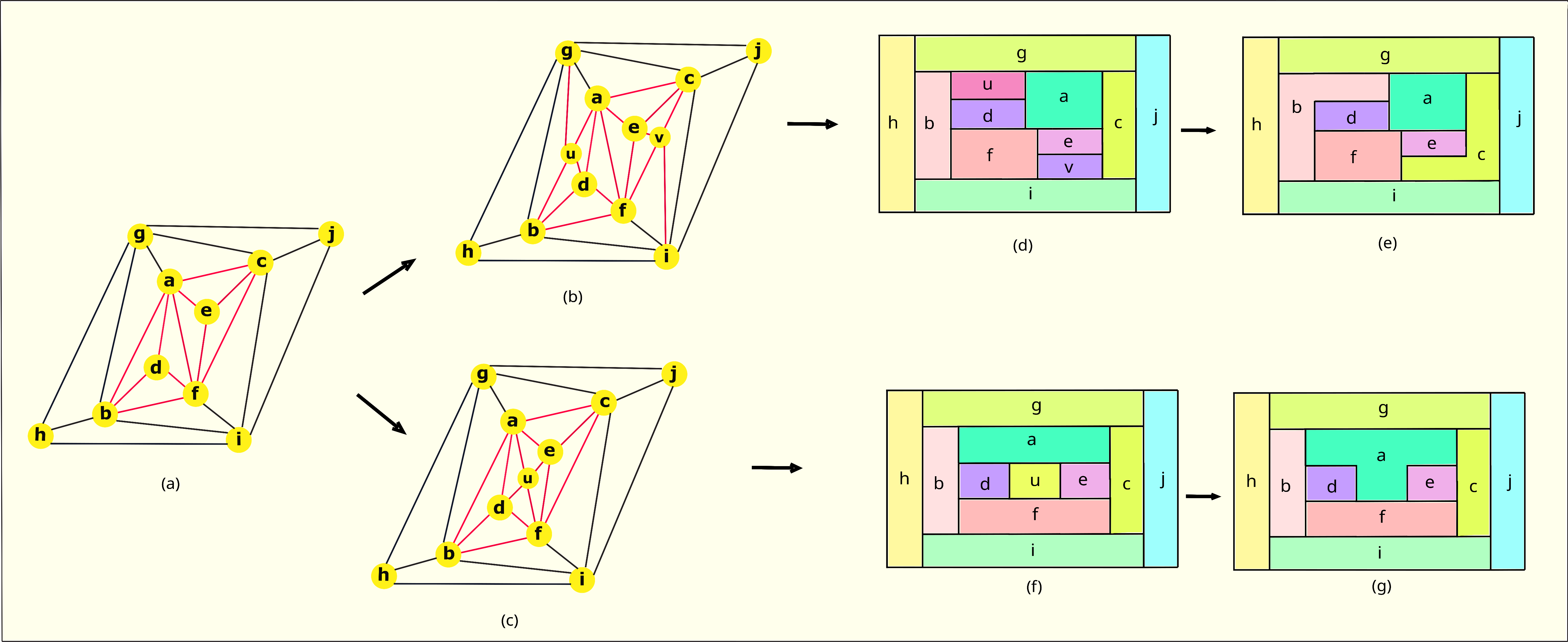}
    \caption{(a) Input graph $G_T$ containing two $K_4$ sharing common edge (i.e., $K_T$). (b, c) Two modified graphs derived from $G_T$ are shown, after eliminating a complex triangle by introducing either one node ($u$) or two extra nodes ($u$ and $v$). (d,f) Rectangular floor plans corresponding to the modified graphs. (e,g) The module $u$ is merged with module $a$ to obtain a $T$-shaped module, and the modules $u$ and $v$ are merged with module $b$ and $c$, respectively, to obtain two $L$-shaped modules ($b$ and $c$) in the floor plans.}
    \label{L-15}
\end{figure}
A floor plan refers to the division of a polygonal space into distinct rooms or modules using straight-line segments. Creating such layouts/floor plans is akin to assembling a complex puzzle, where each module represents a unique piece, and the spatial relationships among them add layers of complexity. Floor plan design has long been a key application of graph theory, particularly in areas like VLSI chip layout, architectural planning, and various domains within computer science. In architectural contexts, floor plans are essential for defining the spatial organization of a construction site. While much of the earlier research has focused on layouts composed of purely rectangular modules, there is increasing interest in exploring floor plans that incorporate non-rectangular shapes. These are referred to as orthogonal floor plans. Although multiple techniques exist for generating such layouts, developing algorithms that consistently produce specific non-rectangular modules remains a mathematically intricate and demanding task.\\
A triangulated graph \cite{he1995efficient} that lacks complex triangles having no more than four corner-implying paths can be represented using a rectangular floor plan. However, if a graph contains a complex triangle, then any floor plan derived from it will include at least one non-rectangular module \cite{sun1993floorplanning}. Therefore, to create a $L$-shaped module within a floor plan, the given graph must consists an internal complex triangle (see section \ref{5.2}), referred to as $K_L$ (as illustrated in Figure \ref{L-13}a, where $\triangle abc$ serves as the complex triangle). Similarly, constructing a $T$-shaped module requires the input graph to contain a specific internal configuration, namely, two complex triangles sharing a common edge (see section \ref{5.5}), denoted as $K_T$ (also illustrated in Figure \ref{L-13}b, where subgraph induced by vertices $a,b,c,d,e,f$ represents $K_T$).\\
Secondly, a notable limitation of the existing approach presented in \cite{shekhawat2023automated} is the lack of control over the specific shape of the module generated, i.e., breaking a complex triangle does not necessarily lead to the formation of a user defined desired $L$-shaped or $T$-shaped module (see Figures \ref{L-14}d, \ref{L-14}f, \ref{L-15}e, \ref{L-15}g). In contrast, the algorithms proposed in this work address this issue by ensuring the targeted creation of either a $L$ or $T$ module within the floor plan with minimal requirements in input graphs (i.e., either the presence of $K_L$ or $K_T$), see Figures \ref{L-14}f, \ref{L-15}g. This is achieved through the use of canonical ordering, a well-established method for ordering vertices in 4-connected triangulated graphs.\\
To generate a $L$-shaped module within a floor plan, the process begins by transforming the input graph into a 4-connected triangulated structure through the addition of auxiliary vertices and edges. Following this, canonical ordering is applied specifically prioritizing the vertices of the modified $K_L$ subgraph (refer to Section \ref{Preliminaries}) based on a defined category-wise priority scheme (Categories A–F, detailed in Section \ref{Category}), examining each category sequentially. A rectangular floor plan is then constructed using this priority-based canonical ordering, after which the auxiliary modules are integrated to produce the final floor plan containing the $L$-shaped module. Similarly, for constructing a $T$-shaped module, the input graph undergoes the same transformation into a 4-connected triangulated graph. Canonical ordering is applied, a rectangular floor plan is created, and then the added modules are merged to form the resulting floor plan featuring a $T$-shaped module.\\
Therefore, we introduce two linear-time algorithms, each with a complexity of $O(n)$ (where $n$ denotes the number of vertices), namely Algorithm \ref{L-shaped} and Algorithm \ref{TLabel}. These algorithms are designed to generate a $L$-shaped or $T$-shaped module within a floor plan corresponding to a given input graph that includes a minimum of one interior subgraph, either a $K_L$ or $K_T$.\\
The structure of this paper is organized as follows: Section \ref{Preliminaries} introduces the fundamental definitions and notations that serve as the foundation for our study. In Section \ref{Literature}, we present a detailed review of relevant literature, followed by an analysis of existing research gaps in Section \ref{Gaps}. Section \ref{Methodology}: (Methodology) outlines the essential conditions required in the input graphs for generating the desired modules, $L$ and $T$, along with their respective construction procedures through proposed algorithms. In Sections \ref{correctness} and \ref{Time}, we provide a thorough analysis of each algorithm's correctness and computational complexity. Finally, Section \ref{conclusion} concludes with a discussion on the study's limitations and potential directions for future work.

\section{Terminology}
\label{Preliminaries}

This section defines the key terms and notations employed in this paper to ensure clarity and consistency in the subsequent discussion of graph-based and floor plan-related concepts.

A graph \cite{kozminski1985rectangular} $G$ consists of two components: a vertex set $V(G)$ and an edge set $E(G)$. The 
vertex set is a finite, nonempty collection of nodes. In contrast, the edge set contains subsets $V_i \subset P(V(G))$ with a cardinality of exactly 2, representing the connections between vertices. A graph is categorized as a planar graph if it can be drawn on a 2-dimensional plane without edge intersections. When a planar graph is drawn this way, it is referred to as a plane graph \cite{kozminski1985rectangular}, and this representation divides the two-dimensional plane into regions known as faces. The unbounded region is called the external face, whereas all other regions are referred to as internal faces.

\begin{definition}
    \textit{k-connected graph}: A graph with a path between every pair of vertices is called a connected graph. This concept can be further generalized to k-connected graphs. Specifically, a graph with an order more than k is said to be k-connected if it remains connected after removing fewer than k vertices (see Figure \ref{connected}).
         \begin{figure}
   \centering
    \includegraphics[width=0.7\textwidth]{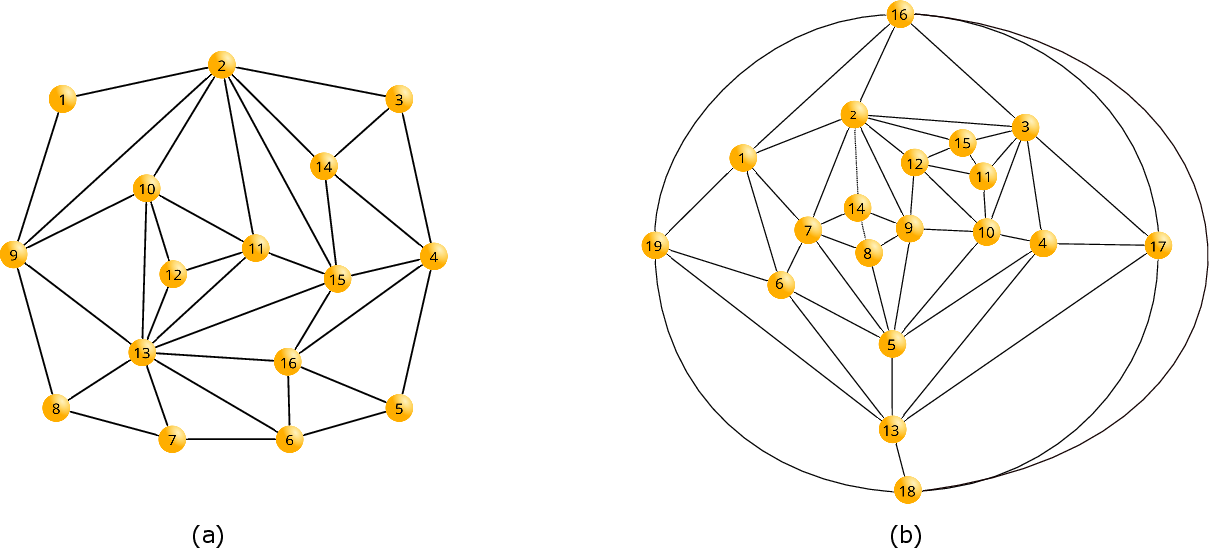}
    \caption{(a) A 3-connected graph. (b) A 4-connected graph.}
   \label{connected}
 \end{figure}
    
\end{definition}
\begin{definition}
\label{PTG}
    \textit{Plane triangulated graph [PTG] \cite{he1995efficient}}: A bi-connected graph is termed PTG if each of its faces, except the external one, is triangular (see Figure \ref{e1}a).
\end{definition}

\begin{definition}
\label{CT}
    \textit{Complex Triangle [CT]}: A CT is a cycle of length three, which contains at least one vertex positioned inside it  (see Figure \ref{e1}a).
      \begin{figure}
   \centering
    \includegraphics[width=0.5\textwidth]{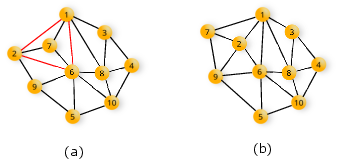}
    \caption{ (a) A PTG with complex triangle (1, 6, 2). (b) PTPG.}
   \label{e1}
 \end{figure}
\end{definition}

\noindent A plane triangulated graph (PTG, see Definition \ref{PTG}) is termed a Properly Triangulated Plane Graph (PTPG) if it contains no complex triangles (see Figure \ref{e1}b).

\begin{definition}
    Canonical Ordering \cite{he1995efficient}: Let $G$ be a 4-connected plane triangulated graph (PTG) that contains three distinct exterior vertices denoted as $N, S,$ and $W$. A canonical ordering of $G$ is a vertex ordering $(v_n,v_{n−1},...,v_1)$ satisfying the following conditions:
    \begin{itemize}
        \item[(i)] Vertices $v_1, v_2,$ and $v_n$ must correspond to $W, S,$ and $N$ respectively. For $4 \leq j \leq n$, the subgraph $G_{j−1}$ induced by $\{v_1,v_2,...,v_{j−1}\}$ is biconnected, with its exterior face forming a cycle $C_{j−1}$ containing the edge $(W, S)$.
        \item[(ii)] Vertex $v_j$ lies strictly outside $G_{j−1},$ $(i.e., v_j \notin V(G_{j−1})$. Furthermore, all neighbors of $v_j$ within $G_{j-1}$ must be arranged consecutively along the path $C_{j-1} \setminus (W, S)$, with at least two such neighbours.
        \item[(iii)] For $j \leq n-2$, it is necessary that $v_j$ has at least two neighbors in the graph $G \setminus G_{j-1}$, ensuring progressive connectivity during the ordering process. (See Figure \ref{CL1} where for input graph $G$, a canonical ordered graph is generated).
    \end{itemize}
   
      \begin{figure}
   \centering
\includegraphics[width=0.9\textwidth]{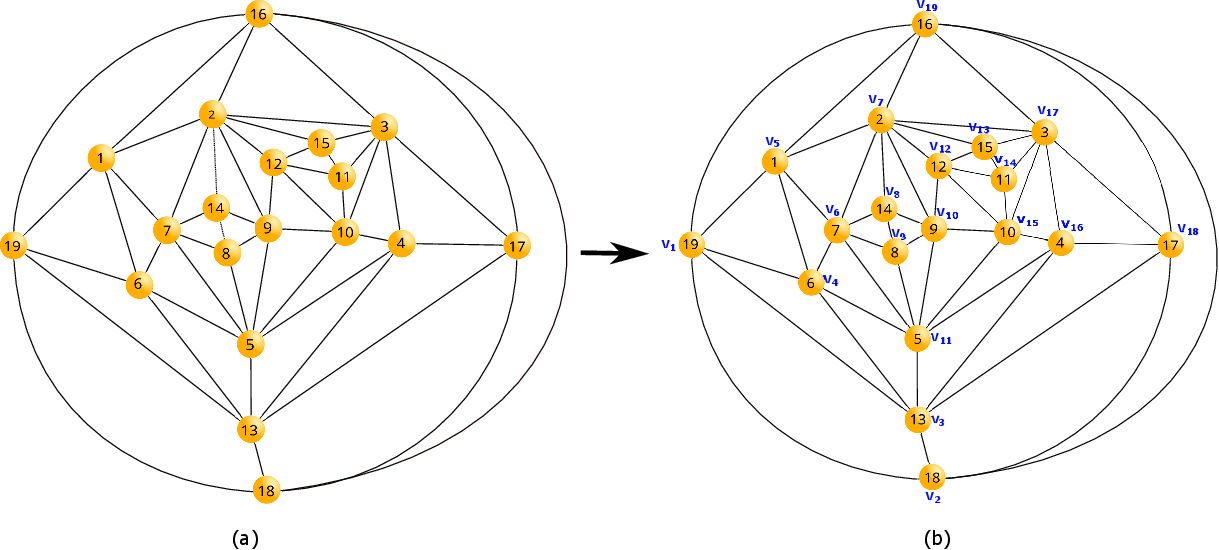}
    \caption{(a) A 4-connected plane graph $G$. (b) Canonical ordered graph of $G$.}
   \label{CL1}
 \end{figure}
\end{definition}

\begin{definition}
    Regular Edge Labeling [REL] \cite{kant1997regular}: A regular edge labeling for a bi-connected PTPG $G$ with four outer vertices N, W, S, E (ordered counterclockwise), constitutes a partition and orientation of its interior edges into two disjoint subsets, $T_1$ and $T_2$, satisfying the following conditions:
    \begin{itemize}
        \item[(I)] For every interior vertex $v$, the incident edges are arranged counterclockwise around $v$ in the sequence:
        \begin{itemize}
            \item Directed toward $v$: incoming $T_1$ edges.
            \item Directed away from $v$: outgoing $T_2$ edges
            \item Directed away from $v$: outgoing $T_1$ edges
            \item Directed toward $v$: incoming $T_2$ edges
        \end{itemize}

        \item[(ii)] Edges incident to vertex N are included in $T_1$ and are directed toward N. Conversely, edges incident to W are part of $T_2$ and are directed away from W. Edges incident to S are contained in $T_1$ and are directed away from S, while edges incident to E lie in the set $T_2$ and are directed toward E.
    \end{itemize}
\end{definition}

\noindent See Figure \ref{Def-REL} (a-b) where for input graph $G$, the regular edge labeling is generated. For every regular edge labeling (REL) of a PTPG $G$, there exists a rectangular floor plan $\mathcal{F}$ (see Figure \ref{Def-REL} (b-c)). In this context, each vertex in $G$ corresponds to a rectangular module in $\mathcal{F}$. Moreover, the directed edges in subset $T_1 $ represent vertical wall sharing between modules, which are aligned along the $y-axis$, while the directed edges in subset $T_2$ signify horizontal wall sharing between modules, aligned along the $x-axis$.
   
\begin{definition}
    Floor plan $(\mathcal{F})$ [\cite{rinsma1988existence}]: A Floor plan decomposes a polygon into smaller component polygons via straight-line segments. The outer polygon is called boundary of the floor plan, while the smaller component polygons are termed modules. Two modules are adjacent if they share a wall segment; mere point contact (four joints) does not constitute adjacency. A special class of floor plans is the rectangular floor plan (RFP), in which the boundary and all modules are rectangular (see Figure \ref{Def-REL}c). Another generalization is the orthogonal floor plan (OFP). In an OFP, the boundary is rectangular, but unlike RFPs, modules in an OFP may be rectilinear shapes, such as L-shaped or T-shaped polygons, provided all edges remain axis-aligned (see Figure \ref{L-3}b).
 \begin{figure}[H]
   \centering
\includegraphics[width=0.70\textwidth]{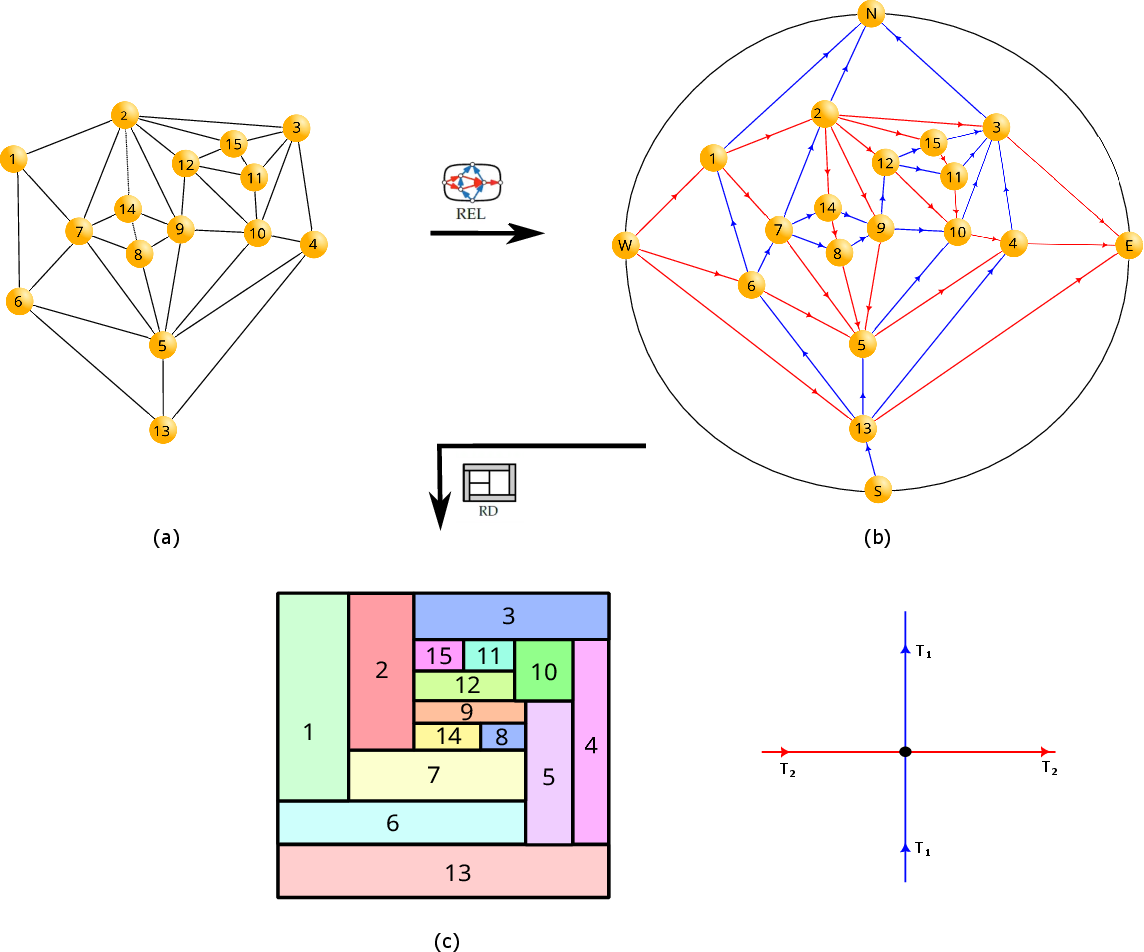}
    \caption{(a) A PTPG $G$. (b) Regular Edge labeling of graph $G$. (c) Floor plan generated using the corresponding REL of graph $G$.}
   \label{Def-REL}
 \end{figure}
\end{definition}
\textbf{Notations}: \\

\begin{itemize}
    \item[1.] \textbf{$G_L$}: An input triangulated plane graph includes at least one interior complex triangle ($K_L$), with the exterior face having a length $\geq 3$ (see Figure \ref{L-14}a).
    \item[2.] \textbf{$G_T$}: An input triangulated plane graph includes at least one subgraph ($K_T$), with the exterior face having a length $\geq 3$ (see Figure \ref{L-15}a).
    \item[3.] \textbf{$n$}: The total number of nodes in the input graph $G_a$ (a can be $L$ or $T$).
    \item[4.] \textbf{$ST_a$} (a can be $L$ or $T$): A set containing a collection of complex triangles $s_i$ of $G_a$.
    \item[5.] $M(l,m,F_a)$: Merge module $l$ with module $m$ in floor plan $F_a$.
    \item[6.] $K_L$: Complex triangle $K_4$ (the vertices of $K_L$ are ordered as: first $a$, then $b$, followed by $c$, and concluding with $d$ in a counter-clockwise sequence in $G_L$ (see Figure \ref{L-14}a).
    \item[7.] $K_T$: Two complex triangle ($K_4$) sharing an edge (the vertices of $K_T$ are ordered as: first $a$, then $b$, followed by $c$, $d$, $e$ and concluding with $f$ in a counter-clockwise sequence in $G_T$ (see Figure \ref{L-15}a).
    \item[8.] $modified$ $K_L$: The exterior edge $(a, b)$ of the complex triangle $K_L$ is subdivided by inserting a new vertex $u$, with additional edges introduced to maintain triangulation, yielding the $modified$ $K_L$ (Figure \ref{L-14}b).
    \item[9.] $modified$ $K_T$: The interior edge $(a, f)$ of $K_T$ is subdivided by inserting a new vertex $u$, with additional edges introduced to maintain triangulation, yielding the $modified$ $K_T$ (Figure \ref{L-15}c).
    \item[10.] \textbf{$G^1_a$ (where $a$ is $L$ or $T$)}: A 4-connected graph formed by augmenting $G_a$ with additional vertices and edges.
    \item[11.] \textbf{$G_{a_r}^1$}: The vertex-induced subgraph of $G^1_a$ defined by the set ${v_1, v_2,..., v_r}$.
    \item[12.] \textbf{ $C^a_r$}: A cycle representing the outer face boundary of $G_{ar}^1$, composed of the edge $(v_1, v_2)$.
    \item[13.] \textbf{$G^2_a$}: A canonically ordered graph derived for generated graph $G^1_a$.
    \item[14.] \textbf{$F'_a$ (where $a$ is $L$ or $T$)}: A rectangular floor plan structurally represent the graph $G^2_a$.
    \item[15.] \textbf{$F_a$}: An orthogonal floor plan incorporating an $a$-shaped module, corresponding to the input graph $G_a$.
    \item[16.] $S_a$: A subset of edges needed to remove complex triangles in $G_a$, excluding the $K_a$ subgraph, i.e., S $=$ \{($a_1$, $b_1$),.....($a_i$, $b_i$)\}.
    \item[17.] $Enodes_{a}$ (where $a$ is $L$ or $T$): A set of extra vertices introduced into $G_a$ to enable the removal of complex triangle within $G_a$ (excluding the $K_a$ subgraph), i.e., $extra$ $nodes$ $=$ \{$u_1$, .., $u_i$\} whereas each $u_i$ is added in $G_a$ corresponding to each ($a_i$, $b_i$) of $S_a$.
    \item[18.] $PLabel(L)$: Assign canonical orders to the vertices in the set $L$ ($L$ $=$ \{$p_1$, $p_2$, $p_3$, $p_4$\})  in descending order, beginning with $p_1$, then $p_2$, followed by $p_3$, and concluding with $p_4$.
    
\end{itemize}

\section{Literature Survey}  
\label{Literature}
\begin{itemize}
    \item[1.] \textbf{Origins of Architectural Graph Theory:} In the late 1900s, Paul and Grason represented architectural floor plans through the use of graphs. 
\end{itemize}
\begin{center}

\label{table1}
\scriptsize
\noindent\setlength\tabcolsep{5pt}
\centering
\begin{longtable}{p{1.5cm} p{3cm} p{5cm} p{5cm}}

\toprule 

 \textbf{\textit{ Reference}}  &  \textbf{\textit{ Contributors}} & \textbf{\textit{ Approach}}  &  \textbf{\textit{ Insight}} 
 
 \\
 
\midrule

\cite{levin1964use} & Paul Levin $(1960s)$ & Graph-theoretic approach & This paper introduced dual graphs for spatial adjacency in architecture by representing rooms as vertices and adjacencies as edges. Levin’s framework abstracted floor plans into connectivity graphs. However theoretical framework lacked computational implementation.

\\

\cite{grason1971approach}& John Grason $(1970)$ & Graph-theoretic approach  &  Grason formalized automated floor plan synthesis using dual graphs. He implemented his findings with the experimental CAD tool GRAMPA. But it is limited to axis-aligned rectangular modules.
\\

\bottomrule
\end{longtable}

\end{center}

\begin{itemize}
    \item[2.] \textbf{Foundational Theory:} A notable advancement in this direction occurred in the 1980s when contributors developed methodologies for automatically generating rectangular floor plans derived from abstract adjacency graphs.
\end{itemize}
\begin{center}

\label{table1}
\scriptsize
\noindent\setlength\tabcolsep{5pt}
\centering
\begin{longtable}{p{1.5cm} p{3cm} p{5cm} p{5cm}}

\toprule 

 \textbf{\textit{ }}  &  \textbf{\textit{ }} & \textbf{\textit{ }}  &  \textbf{\textit{ }} 
 \\

\cite{kozminski1985rectangular} & Kozminski and Kinnen $(1984-1985)$ & Graph theoretic approach & They developed the necessary and sufficient conditions for rectangular duals, which include planar triangulation and exclusion of complex triangles. They produced a Quadratic-time $O(n^2)$ algorithm for verification of triangulation and generation of rectangular floor plans.

\\

 \cite{bhasker1988linear}& Bhasker and Sahni $(1988)$ & Graph-theoretic approach & The authors developed an algorithm that operates in linear time $(O(n))$ for constructing rectangular duals and checking the triangulation. However, the coordinates of the rectangular dual (floor plan) generated from the proposed algorithm are real and do not relate to the structure of the input plane graph.

\\

\cite{he1995efficient} &   Xin He $(1995)$   &  Graph-theoretic approach  &  The author discusses a parallel algorithm for constructing rectangular duals of plane triangular graphs in $O(log^2n)$ time with $O(n)$ processors on a CRCW PRAM

\\

 \cite{kant1997regular}& Goos Kant, Xin He $(2019)$ & Graph-theoretic approach & This paper presents two linear-time $O(n)$ algorithms, namely edge contraction and canonical ordering, developed for constructing regular edge labeling (REL) for 4-connected triangulated plane graphs. The algorithms ensure that the coordinates of the constructed rectangular dual are integers, addressing a notable limitation of \cite{bhasker1988linear}.

\\
\bottomrule
\end{longtable}

\end{center}

\begin{itemize}
    \item[3.] \textbf{Recent Developments on Rectangular Floor plans (RFPs):} Over the years, floor plan generation has progressively developed from optimization techniques and rule-based graph transformations to advanced machine learning and deep-learning approaches. The initial methods include reproducing floor plans using graph algorithms and mathematical optimization. Later, hybrid methods integrated evolutionary and greedy algorithms, followed by reinforcement learning for better constraint handling. Most recently, deep learning and graph neural networks have enabled the generation of realistic, constraint-compliant floor plan layouts, marking a shift toward data-driven, intelligent design systems.
\end{itemize}
\begin{center}

\label{table1}
\scriptsize
\noindent\setlength\tabcolsep{5pt}
\centering
\begin{longtable}{p{1.5cm} p{3cm} p{5cm} p{5cm}}

\toprule 

 \textbf{\textit{ }}  &  \textbf{\textit{ }} & \textbf{\textit{ }}  &  \textbf{\textit{ }} 
 \\

\cite{wang2018customization}& X. Wang \textit{et al.} $(2018)$ & Graph-transformations  & The authors introduced a graphical approach to design generation (GADG) of RFPs based on existing legacy floor plans using dual graphs of PTPGs. This approach employs a rectangular dual-finding technique to automatically reproduce a new set of floor plans, which may be further refined and customized.

\\

\cite{nisztuk2019hybrid} & M. Nisztuk, P. Myszkowski $(2019)$ & Greedy-based and Evolutionary approaches  & The authors present a hybrid framework combining Evolutionary and Greedy algorithms for automated floor plan generation that meets adjacency and dimensional constraints. The Evolutionary Algorithm optimizes room sequences, scaling, and axis transformations, while the Greedy method incrementally places rooms based on these parameters, ensuring adjacency and non-overlap.

\\

\cite{shi2020addressing}& Feng shi \textit{et al.} $(2020)$ & Graph-theoretic approach and reinforcement learning  & The Authors present the Monte-Carlo Tree Search approach, which is based on reinforcement learning algorithms. To use this approach, a decision tree is required where the layout and constraints of the rectangular floor plan are defined as the input and output for an optimised layout.

\\

\cite{upasani2020automated}& N. Upasani \textit{et al.} $(2020)$ & Graph theory and mathematical optimisation & This research introduces a computational method for generating dimensional rectangular floor plans while preserving adjacencies derived from existing rectangular layouts. The authors employ linear optimisation techniques on the vertical and horizontal flow networks to accommodate the user-defined dimensional constraints and to obtain a feasible solution with minimum area that satisfies the given adjacencies. However, no remarks concerning the optimality of the solution were discussed.

\\

\cite{hu2020graph2plan} & Ruizhen Hu \textit{et al.} $(2020)$ &  Machine learning approach and deep neural network & Authors present a learning framework to automate floor plan generation, incorporating user-defined constraints (boundary, number of rooms, and adjacencies) to produce layout graphs retrieved from the floor plans within the RPLAN training dataset. These layout graphs are then adjusted to the input boundary. The Graph2Plan model, based on a graph neural network (GNN), generates a corresponding floor plan.

\\

\cite{wang2023automated}  & Lufeng Wang \textit{et al.} $(2023)$ & Deep learning and graph theory techniques & The authors present a framework combining deep learning and graph algorithms for automated building layout generation. The algorithm was trained over the unique GeLayout annotated dataset. The system optimises layout selection by employing Euclidean distance, Dice coefficient, and a force-directed graph algorithm.

\\

\cite{liu2024intelligent} & J. Liu \textit{et al.} $(2024)$ & Graph theoretic approach and Deep learning techniques & The paper presents a framework that employs a Graph-Constrained Generative Adversarial Network (GC-GAN) specifically for generating Modular Housing and Residential Building (MHRB) floor plans. This GC-GAN includes knowledge graphs to guarantee that the generated floor plans are realistic. It also incorporates an image-to-vector conversion algorithm for compatibility with a flat-design standardisation library. A significant aspect is the automated advancement of BIM models that adhere to modularity standards for efficient formation.

\\

\bottomrule
\end{longtable}

\end{center}

\begin{itemize}
    \item[4.] \textbf{Transition to Non-Rectangular Modules (1990s–2025s):} The transition from RFPs (1993) to OFPs incorporates non-rectangular rooms through advancements in graph theory. Early research focused on pruning of complex triangles for RFPs and the verification of $L$-shaped modules. Following these methodologies, linear-time algorithms were developed that facilitated the inclusion of $T$-shaped, $I$-shaped modules (1999-2003), and optimizations including spanning trees (2003). The exploration of rectilinear polygons (2011) and hexagonal tiling (2012) further enhanced geometric flexibility. Recently, the evolution of methodologies has transitioned from obstruction removal to module merging and ultimately to topological manipulation, resulting in a reduction in time complexity from quadratic to linear while accommodating irregular contours and user-defined geometries.
\end{itemize}

\begin{center}

\label{table1}
\scriptsize
\noindent\setlength\tabcolsep{5pt}
\centering
\begin{longtable}{p{1.5cm} p{3cm} p{5cm} p{5cm}}

\toprule 

 \textbf{\textit{ }}  &  \textbf{\textit{ }} & \textbf{\textit{ }}  &  \textbf{\textit{ }} 
 \\

 \cite{tsukiyama1993algorithm}& S. Tsukiyama \textit{et al.} $(1993)$ & Graph theoretic approach & This paper demonstrated that certain planar triangulated graphs (PTGs) resist a standard rectangular floor plan (RFP) due to embedded complex triangles. To address this limitation, they introduced an algorithm that removes these obstructive substructures and reconstructs an RFP on the pruned graph. Their algorithm runs in quadratic $O(n^2)$ time.

\\

 \cite{sun1993floorplanning}& Y. Sun \textit{et al.} $(1993)$ & Graph-theoretic approach & This paper presents an algorithm for whether a given graph admits a $L$-shaped dual with the complexity of this determination being $O(n^{\frac{3}{2}})$. If a $L$-shaped module exists, it can construct the rectangular floor plan with a $L$-shaped module in quadratic $O(n^2)$ time.

\\

 \cite{he1999floor}& X. He $(1999)$ & Graph-theoretic approach & The paper presents a linear time algorithm for the construction of floor plans for PTG using only 1- and 2-rectangle modules. The findings demonstrate a clear advancement over previous research \cite{sun1993floorplanning} conducted by Yeap and Sarrafzadeh, which demonstrated that PTG could be represented using 1-, 2-, and 3-rectangle modules.

\\

 \cite{kurowski2003simple}& M. Kurowski $(2003)$ & Graph-theoretic approach & The paper presents an algorithm for computing a floor plan in linear $O(n)$ time. The theory employs modules formed by merging two rectangles: $T-$, $L-$, or $I$-shaped. The dimension of the generated floor plan is at most $n\times n- 1$.

\\

 \cite{liao2003compact}&   CC Liao \textit{et al.} $(2003)$   &  Graph-theoretic approach  &  The paper introduces the algorithm, which is based upon orderly spanning trees to extend canonical ordering to plane graphs that do not require triangulation. This approach bypasses the complicated rectangular-dual phase and facilitates the computation of an orderly pair in linear time.

\\

 \cite{alam2011linear}&   MJ Alam \textit{et al.} $(2011)$  &  Graph-theoretic approach  &  This paper presents a study on proportional contact representations that use rectilinear polygons without wasted areas. The authors introduced a novel algorithm that ensures 10-sided rectilinear polygons and operates in linear O(n) time. These results improve the previous work that claimed to generate 12-sided rectilinear polygons within time complexity $O(nlogn)$. Additionally, they proposed a linear-time algorithm for proportional contact representation of planar 3-trees with 8-sided rectilinear polygons and showed that this is optimal.

 \\
 
\cite{duncan2012optimal} & CA Duncan \textit{et al.} $(2012)$ & Graph theoretic approach & The authors present a study demonstrating that hexagons are necessary and sufficient for depicting all planar graphs that pentagons cannot represent. It is possible to construct a touching hexagon representation of graph G in linear time on an $O(n)\times O(n)$ grid with convex regions.

\\

\cite{shekhawat2017rectilinear} & K. Shekhawat \textit{et al.} $(2017)$ & Graph theoretic approach & The author proposes a graph theory-based framework for generating rectilinear floor plans within non-rectangular contours, satisfying room adjacency and size constraints. Building upon prior rectangular models, it supports complex layouts, such as hospitals and offices, through polygonal boundaries and user-defined adjacencies. Limitations include a lack of real-time adaptability and limited multi-story integration.

\\

\cite{shekhawat2023automated} & K. Shekhawat \textit{et al.} $(2023)$ & Graph theoretic approach & The author presents an innovative algorithm for the generation of rectilinear floor plans. This research advances prior investigations focusing on orthogonal floor plans without considering specific room shapes. The proposed research framework utilizes complex triangles to generate $L$-shaped, $T$-shaped, $C$-shaped $F$-shaped, stair-shaped, and plus-shaped (cross-shaped) rooms. However, the proposed algorithm does not possess the capability to generate a specific shape for a given PTPG

\\

\bottomrule
\end{longtable}

\end{center}

\section{A Comparative Review of Literature Gaps}\label{Gaps}
Early research on automated floor plan generation based on graph-theoretic approaches [\cite{kozminski1985rectangular},
 \cite{bhasker1988linear}, 
\cite{he1995efficient}, 
 \cite{kant1997regular}] has primarily been focused on creating rectangular floor plans. As the field progressed, algorithms [\cite{duncan2012optimal}, \cite{shekhawat2023automated}] have emerged to generate more complex non-rectangular modules, including $L$-shaped, $T$-shaped, $C$-shaped, $F$-shaped, stair-shaped, and plus-shaped (cross-shaped) rooms, by leveraging the classification of graphs with complex triangles. These frameworks mainly operate by first identifying complex triangles within the graph, subdividing their edges according to specific rules, and then merging the resulting modules to form non-rectangular modules. However, a notable limitation of these methodologies is their inability to guarantee the generation of a specific non-rectangular shape for a given input graph; for the same graph and subdivision process, different non-rectangular shapes may emerge unpredictably. As a result, these frameworks lack precise control over the final module shapes, making them unsuitable for applications where specific room geometries are required.

In contrast, our work emphasizes the systematic generation of $L$ and $T$-shaped modules through the integration of rectangular components. We present a refined classification of graphs and establish necessary conditions for the construction of $L$ and $T$ shapes by manipulating complex triangles in conjunction with priority-based canonical ordering. Our approach recognizes that both $L$ and $T$ shapes can arise from the same graph classification; however, achieving the desired outcome requires a specific order in the placement of modules associated with the complex triangle. By incorporating Regular Edge Labeling (REL) and prioritizing canonical ordering, we ensure that the generation process is both deterministic and efficient.

A notable advancement of our research over previous work [\cite{shekhawat2023automated}, \cite{shekhawat2017rectilinear}] is the substantial reduction in computational complexity. Our proposed method achieves linear time generation for $L$ and $T$-shaped rooms. This not only guarantees the existence of the desired module shape for a given graph but also makes the approach scalable and practical for real-world architectural design applications. Both $L$ and $T$ shapes are classified under the same foundational graphs, their distinction arises only when further classified using Regular Edge Labeling (REL) and priority canonical ordering. Thus, it is necessary to have a separate algorithmic approach for $L$ and $T$ shapes.

\section{Methodology} \label{Methodology}
This section outlines a linear-time algorithm designed to construct either a $L$-shaped or a $T$-shaped module within a floor plan, starting from a triangulated plane graph whose outer face has at least three edges. The algorithm relies on the existence of one or more interior complex triangles (i.e., subgraphs isomorphic to $K_4$) depending on the specific module to be generated. In the upcoming sections, we will describe the proposed algorithms for generating each module type (i.e., either $L$ or $T$ shaped) separately. It is important to emphasize that the generation of such modules is feasible only when the graph contains the necessary complex triangles to guide the construction.

\begin{figure}
   \centering
    \includegraphics[width=1.0\textwidth]{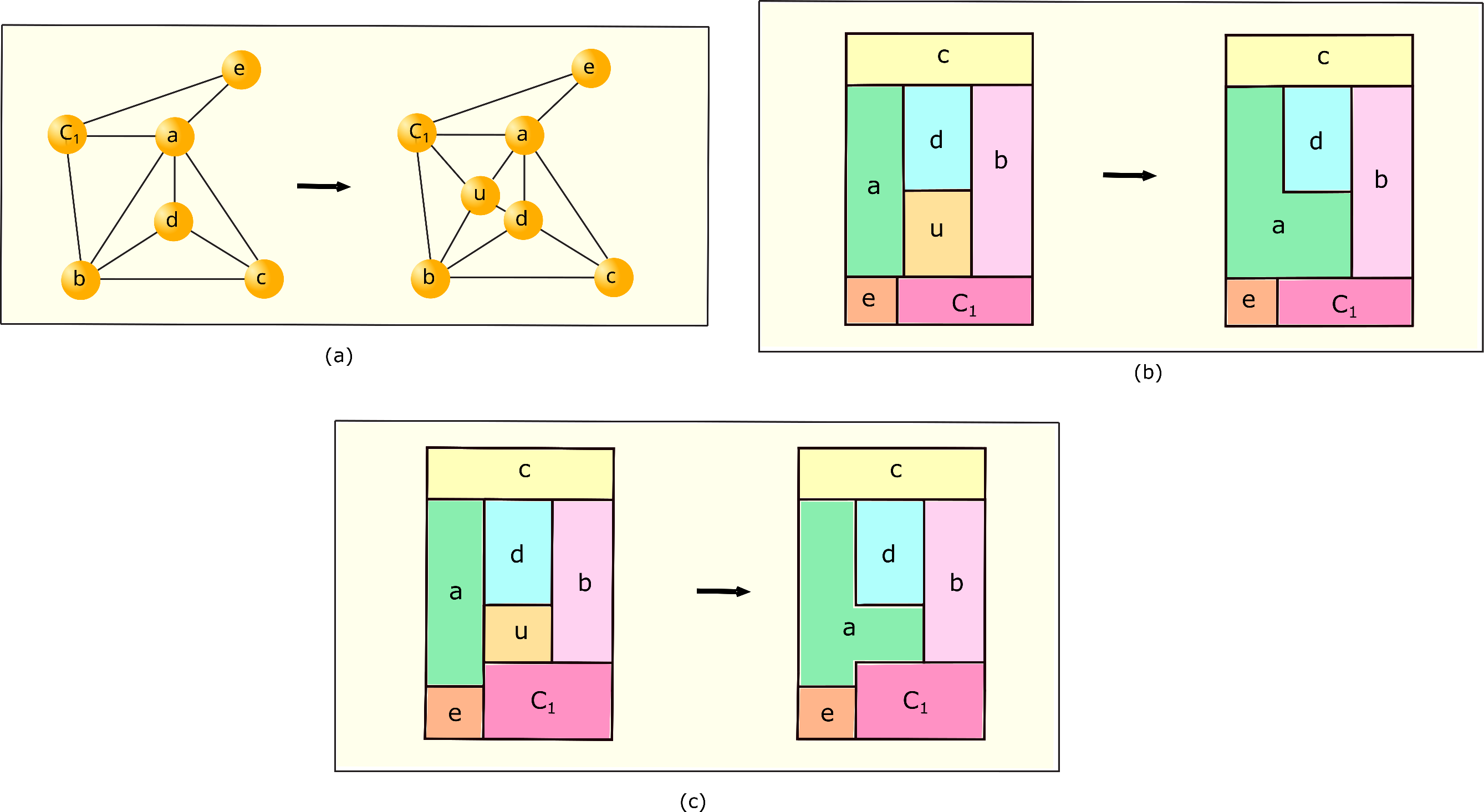}
    \caption{ (a) Inserting an additional vertex $u$ into the input graph $K_L$ for complex triangle removal. (b–c) A $L$-shaped module is generated in the resulting floor plan, while a trivial $T$-shaped module is created for the same $modified$ $K_L$ graph.}
   \label{L-2}
 \end{figure}
\begin{figure}
   \centering
    \includegraphics[width=0.7\textwidth]{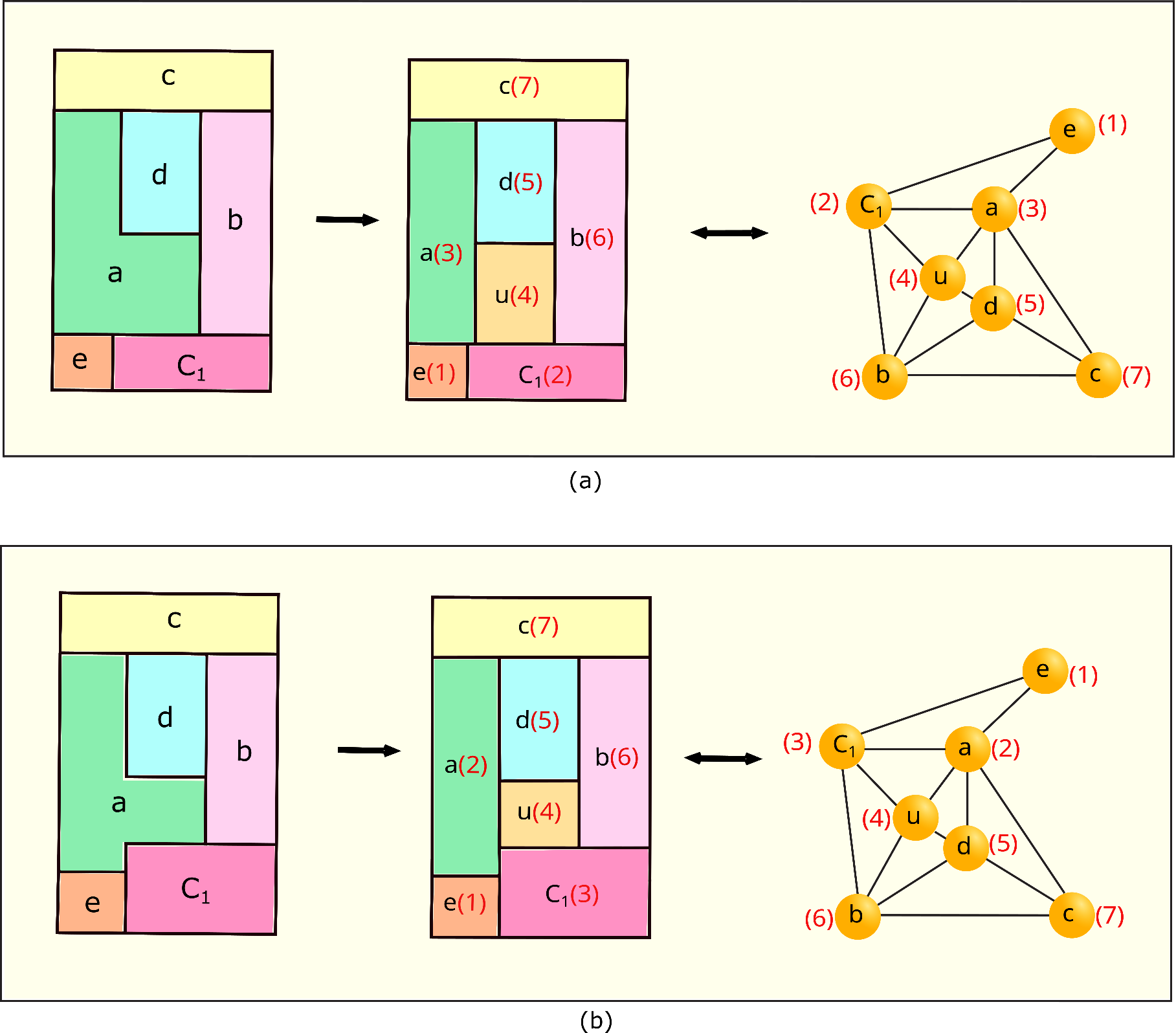}
    \caption{ (a-b) Canonical ordered graphs generated from floor plans, utilizing the algorithm in \cite{kant1997regular}, which include $L$-shaped and trivial $T$-shaped modules.}
   \label{L-4}
 \end{figure}
  \begin{figure}
   \centering
    \includegraphics[width=0.50\textwidth]{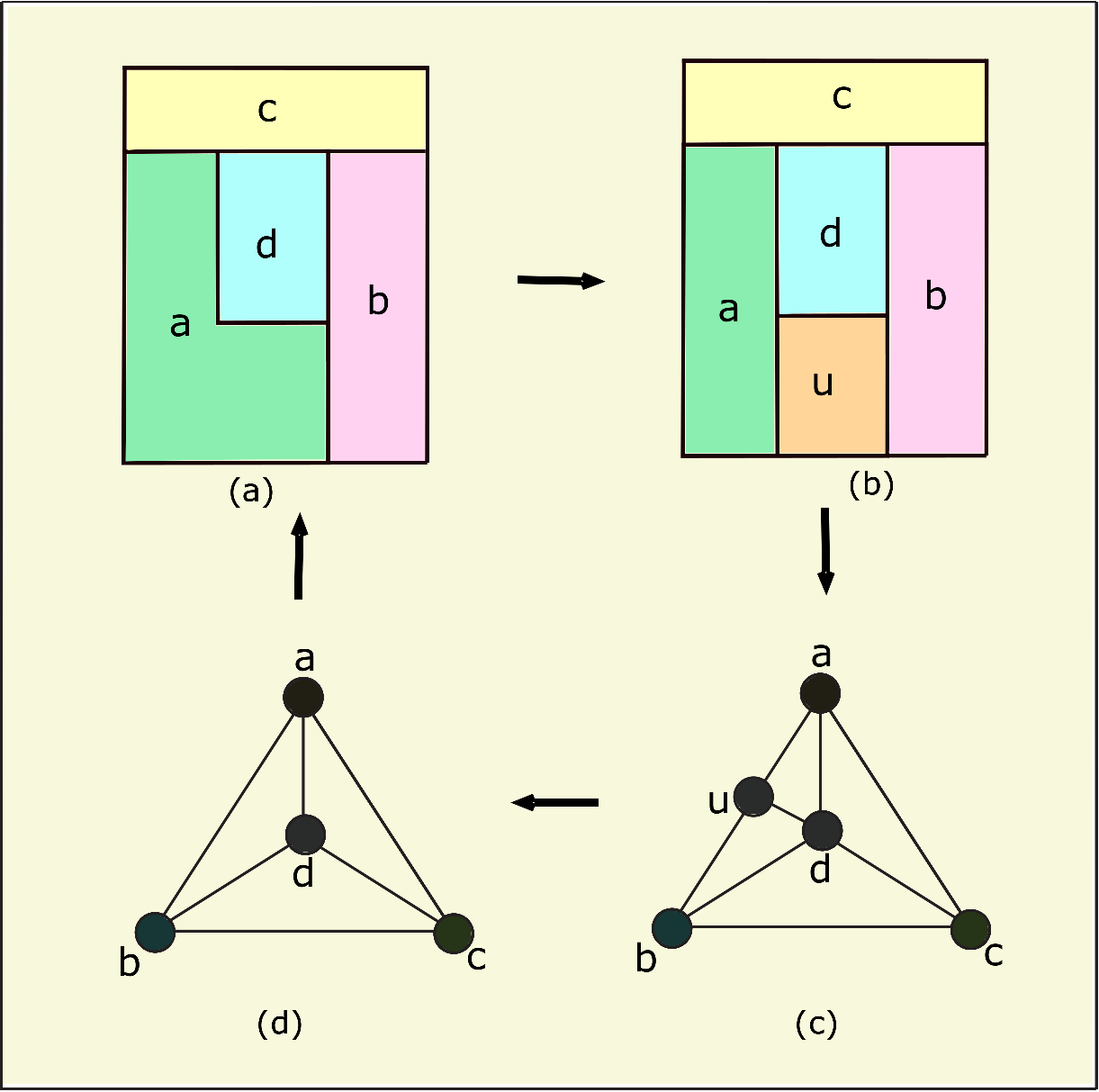}
    \caption{ (a-d) Requirement of an interior complex triangle $K_L$ for the generation of a $L$-shape module.}
   \label{L-3}
\end{figure}
\begin{figure}
    \centering
    \includegraphics[width=0.75\textwidth]{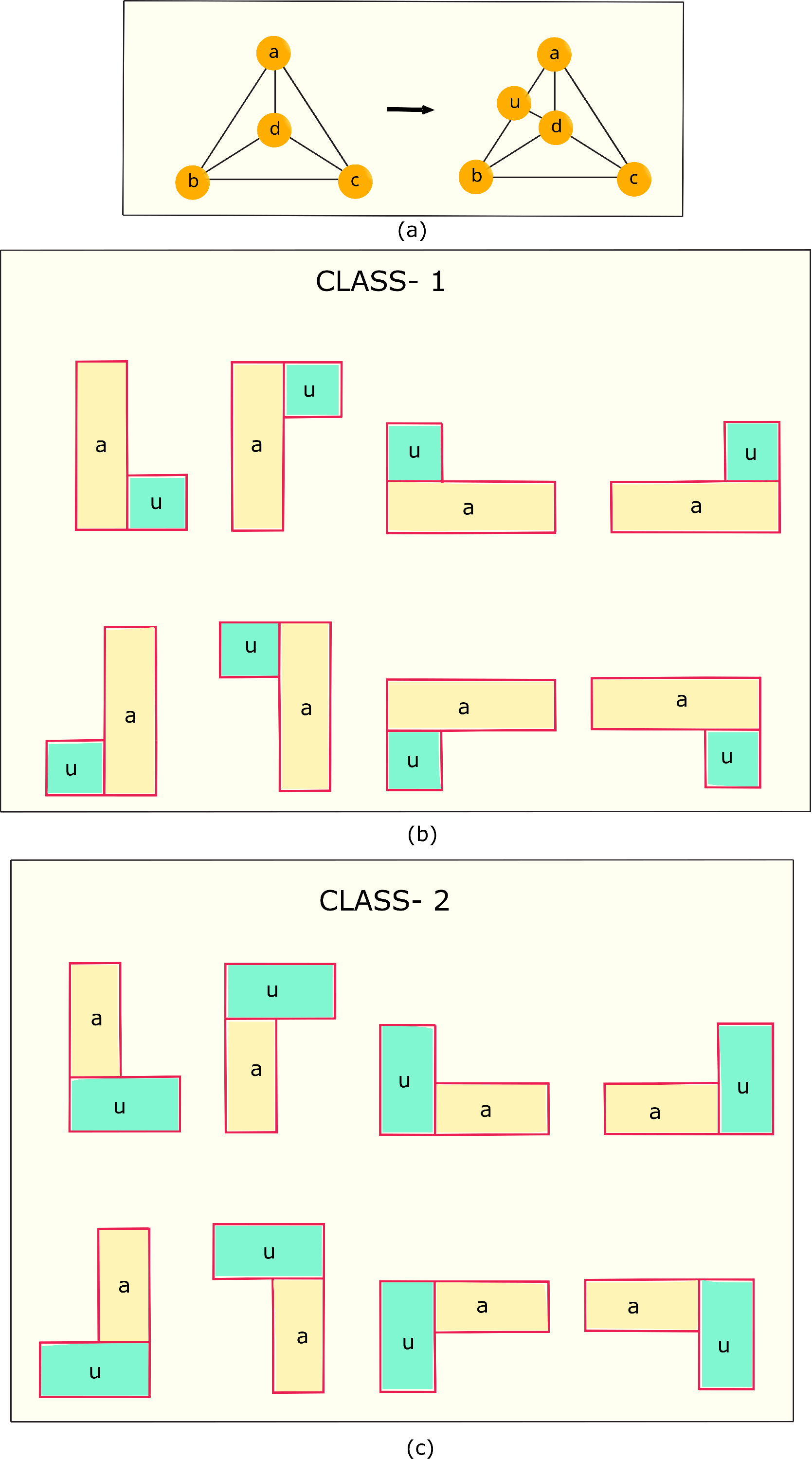}
    \caption{ (a-c) Several ways for merging module $u$ with module $a$.}
   \label{L-1}
 \end{figure}
   \begin{figure}
   \centering
    \includegraphics[width=0.70\textwidth]{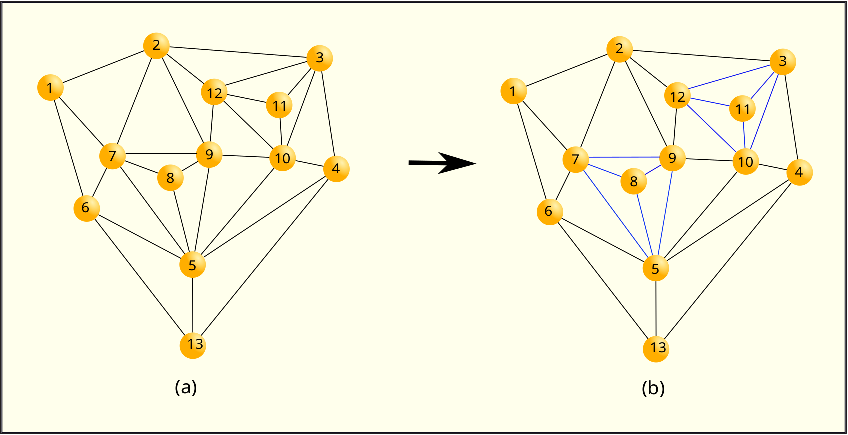}
    \caption{ (a-b) Identifying the complex triangle (i.e., $K_L$ and others)  in $G^1_L$. }
   \label{L-9}
 \end{figure}
    \begin{figure}
   \centering
    \includegraphics[width=0.90\textwidth]{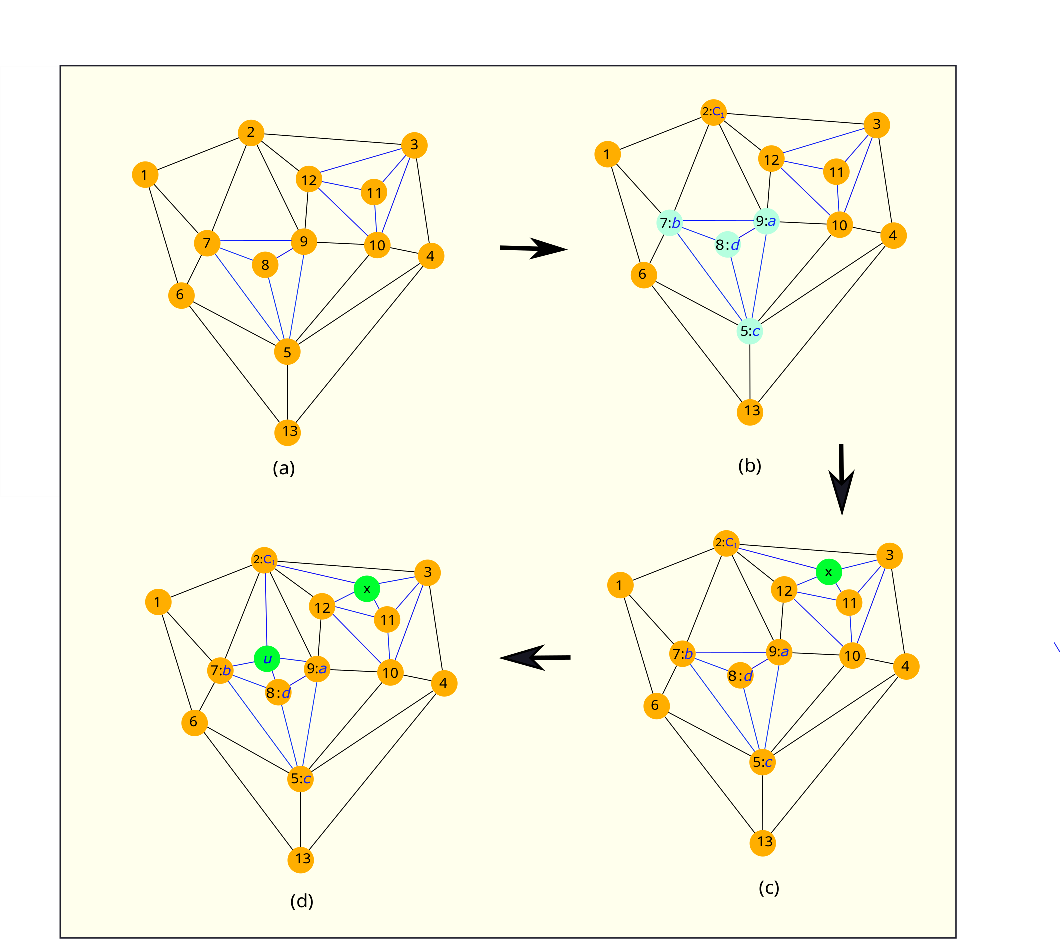}
    \caption{ (a-d) Label the complex triangle $K_L$ (i.e., $a$, $b$, $c$, and $d$ in counterclockwise order) and breaking the complex triangles in $G^1_L$.}
   \label{L-11}
 \end{figure}
   \begin{figure}
   \centering
    \includegraphics[width=1.00\textwidth]{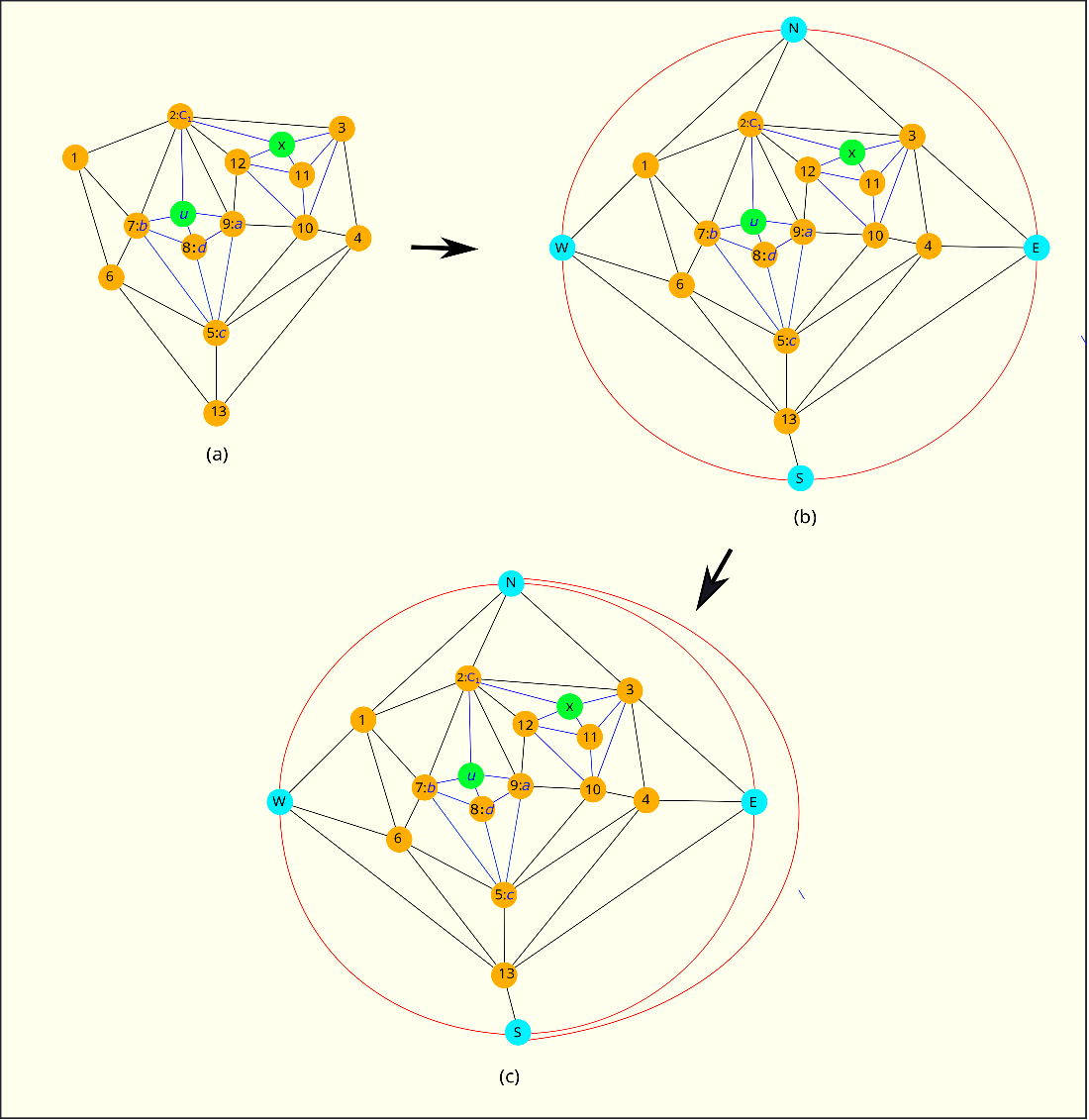}
    \caption{ (a-c) Applying Four-Completion and adding extra edge in $G^1_L$ to construct $4$ connected triangulated graph.}
   \label{L-12}
 \end{figure}

\subsection{An Overview of Our Proposed Work for $L$- Shaped Module Generation}
Various RFPs are generated by modifying the $K_4$ subgraph through the subdivision of one of its outer edges, using the method described in \cite{kant1997regular}. After merging the additional module $u$, which arises from the elimination of a complex triangle $K_4$, various floor plans featuring both $L$-shaped and trivial $T$-shaped modules are produced (see Figure \ref{L-2}). This demonstrates that simply breaking a single edge of a complex triangle does not necessarily result in a $L$-shaped module within a floor plan. This suggests that when a graph contains $K_4$ as a subgraph, an extra step or method is needed to ensure the creation of a specific $L$-shaped module in the final floor plan.\\
Additionally, we found that the canonical vertex ordering described in \cite{kant1997regular} differs between the graphs representing floor plans containing $L$-shaped modules and those with trivial $T$-shaped modules (see Figure \ref{L-4}). This suggests that to successfully create a $L$-shaped module within the floor plan, it is necessary to prioritize the canonical ordering of the modified $K_L$ subgraph during the ordering process of the input graph $G_L$.\\
Hence, we aim to develop an algorithm that assigns canonical orders to the vertices of the input graph $G_L$ and the modified $K_L$ subgraph following a predetermined priority sequence (see Figures \ref{L-5}, \ref{L-6}. \ref{L-7}: where 10 possible canonical ordering derived for the generation of $L$-shaped module), ensuring the resulting floor plan includes a $L$-shaped module. Accordingly, $Algorithm$ \ref{L-shaped} constructs a $L$-shaped module within the floor plan $F_L$ for any given PTG $G_L$ that contains at least one interior complex triangle $K_L$.
\subsection {Requirement of an Interior Complex Triangle $K_L$ within the Graph $G_L$} \label{5.2}
The structural role of complex triangles within a plane graph is a critical element in determining the characteristics of the subsequent floor plan. When analyzing a triangulated plane graphs that lack complex triangles and permit a maximum of four corner-implying paths, it follows that such a plane graph can be represented as a rectangular floor plan. Conversely, including complex triangles introduces geometric complexities that prevent the establishment of a solely rectangular floor plan. This phenomenon is attributable to the fact that complex triangles ensure the presence of non-rectangular modules within a rectangular floor plan \cite{sun1993floorplanning}.\\
To design a floor plan ($F_L$) that incorporates a module of $L$-shaped, it is essential to include a subgraph $K_L$ within $G^1_L$ (see Figure \ref{L-3}). Therefore, the presence of a subgraph $K_L$ in the input graph is necessary to successfully form the module of shape $L$ within the floor plan. The outcome highlights the fundamental requirements for the construction of a $L$-shaped module within a floor plan. Consequently, the process begins by identifying a region $K_L$ in the input graph $G_L$, with its vertices ordered $a$, $b$, $c$, and $d$ in counterclockwise order.\\
To integrate a module $L$ in the floor plan, there are two classes to merge the additional module $u$ with module $a$ according to the degree of the vertex $u$ (see Figure \ref{L-1}). These configurations involve $u$ sharing horizontal or vertical walls with $a$. For each class, there are eight ways to merge module $u$ with module $a$; in this study, we will only generate a $L$-shaped module in the floor plan with respect to Class-1.\\
Based on the types in Class-1, we derived ten categories of $L$-shaped modules that can be present in the final floor plan. For each of the ten categories (1–10), it is possible to construct a $L$-shaped module within the floor plan (see Figure \ref{L-5}, \ref{L-6}, \ref{L-7}). However, the proposed Algorithm \ref{L-shaped} must define the canonical ordering for each category based on priority. There are six unique cases in total, where some share the same canonical ordering but differ in their regular edge labeling (REL), as explained in a later section.\\
This study concentrates on building a $L$-shaped module within the floor plan, focusing on one class out of two possible cases (see Figure \ref{L-1}). The presence of such a $L$-shaped module for any of the ten categories (for Class-1) within a given graph is formally established in the correctness section. Furthermore, for each category, the module $u$ must have degree 4 to ensure its inclusion in the resulting floor plan (see Figure \ref{L-1}). This requirement indicates that the vertex $u$ in the modified subgraph $K_L$ of the graph $G_L$ must be placed as an interior vertex. Therefore, our focus remains on forming a $L$-shaped module derived from the ten defined categories, considering the interior configuration of $K_L$.

 \begin{figure}
   \centering
    \includegraphics[width=0.9\textwidth]{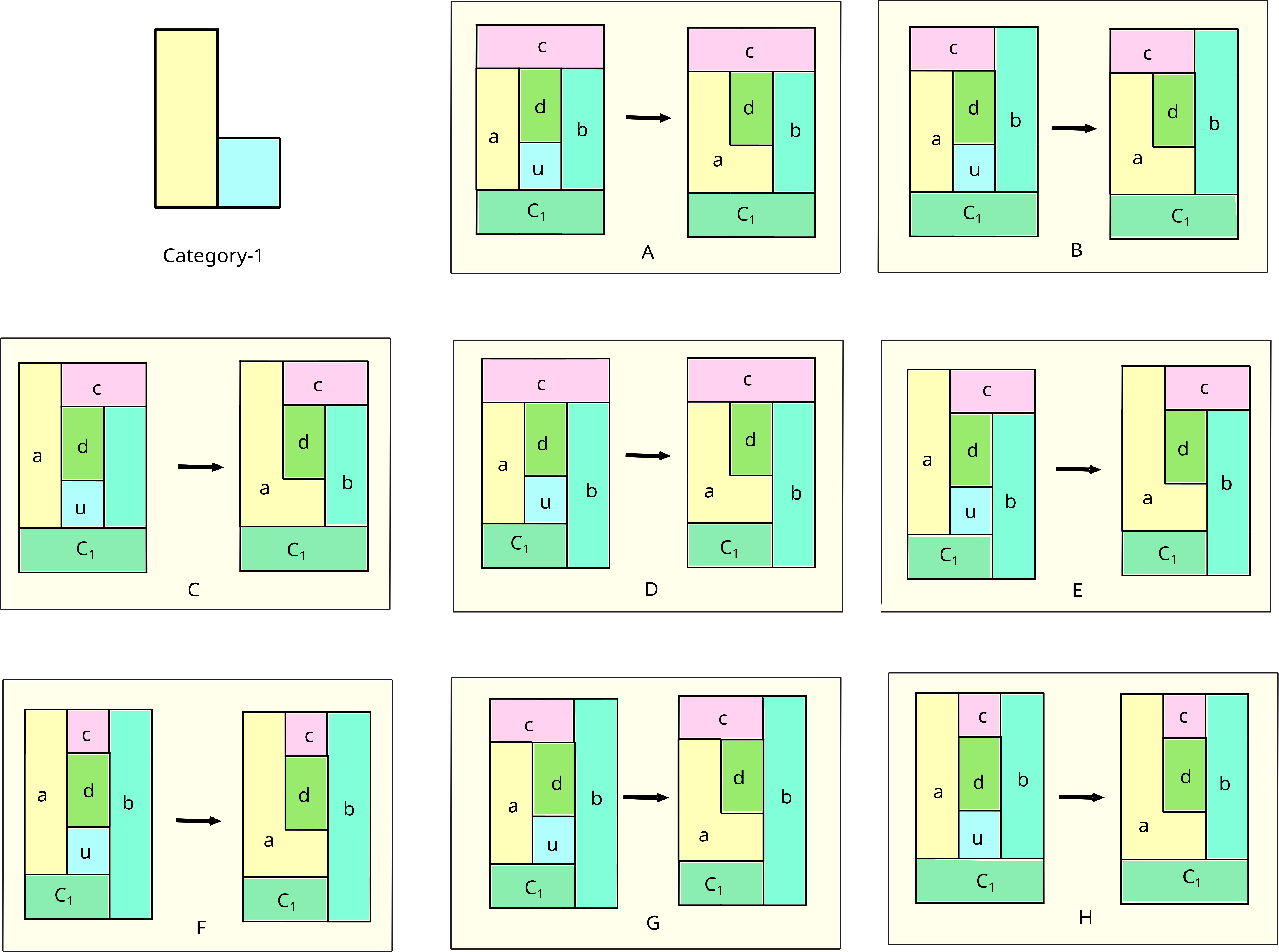}
    \caption{Eight possible $L$-shaped configuration types (A–E) associated with Category-1 (see Figure \ref{L-5}) in the floor plan.}
   \label{L-8}
 \end{figure}
 \begin{figure}
   \centering
    \includegraphics[width=0.80\textwidth]{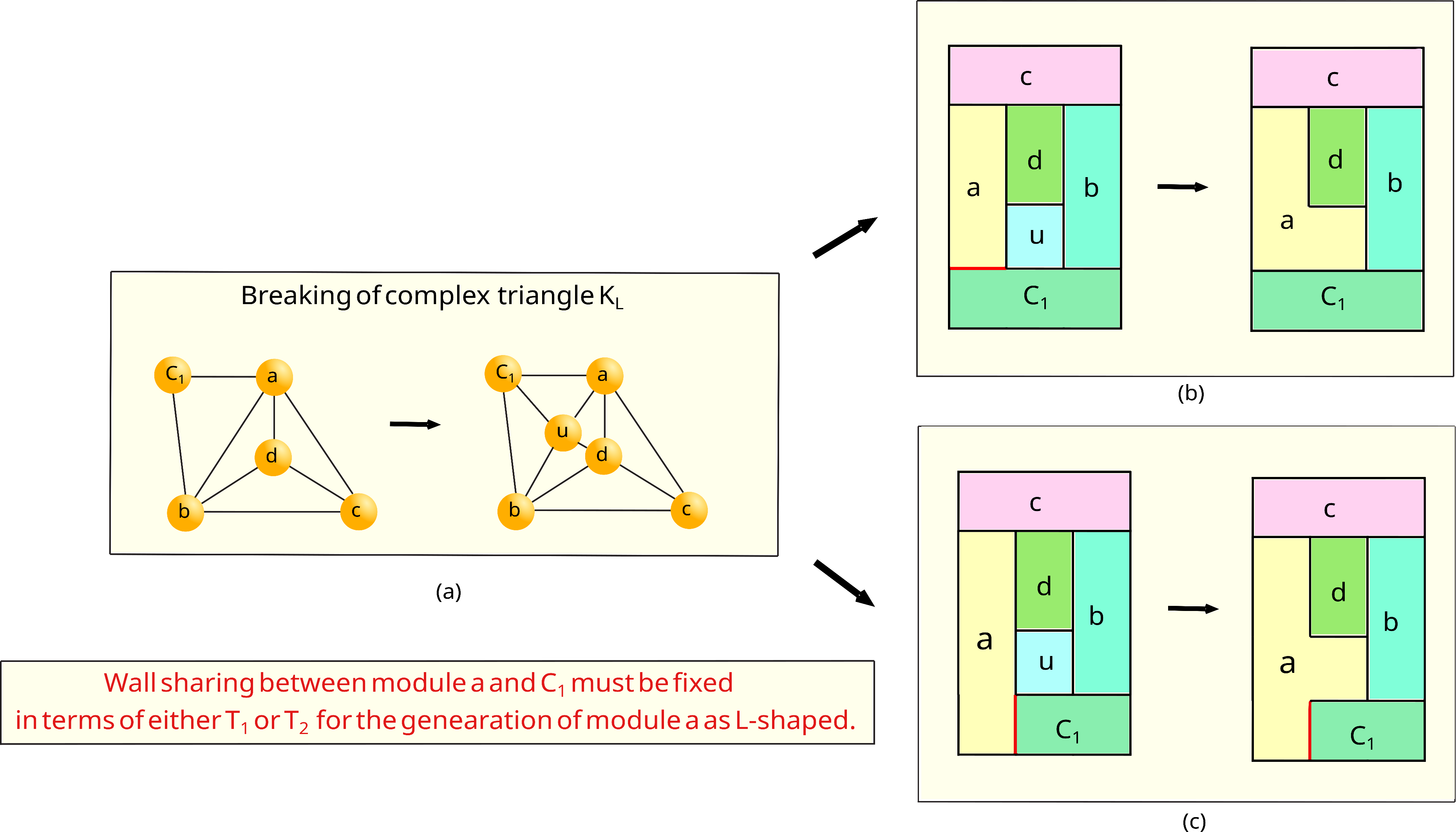}
    \caption{ (a) Modified graph $K_L$. (b-c) Sharing of a wall between module $a$ and $C_1$ may lead to the absence of a $L$-shape in the final floor plan.}
   \label{L-10}
 \end{figure}

\begin{figure}
   \centering
    \includegraphics[width=1.04\textwidth]{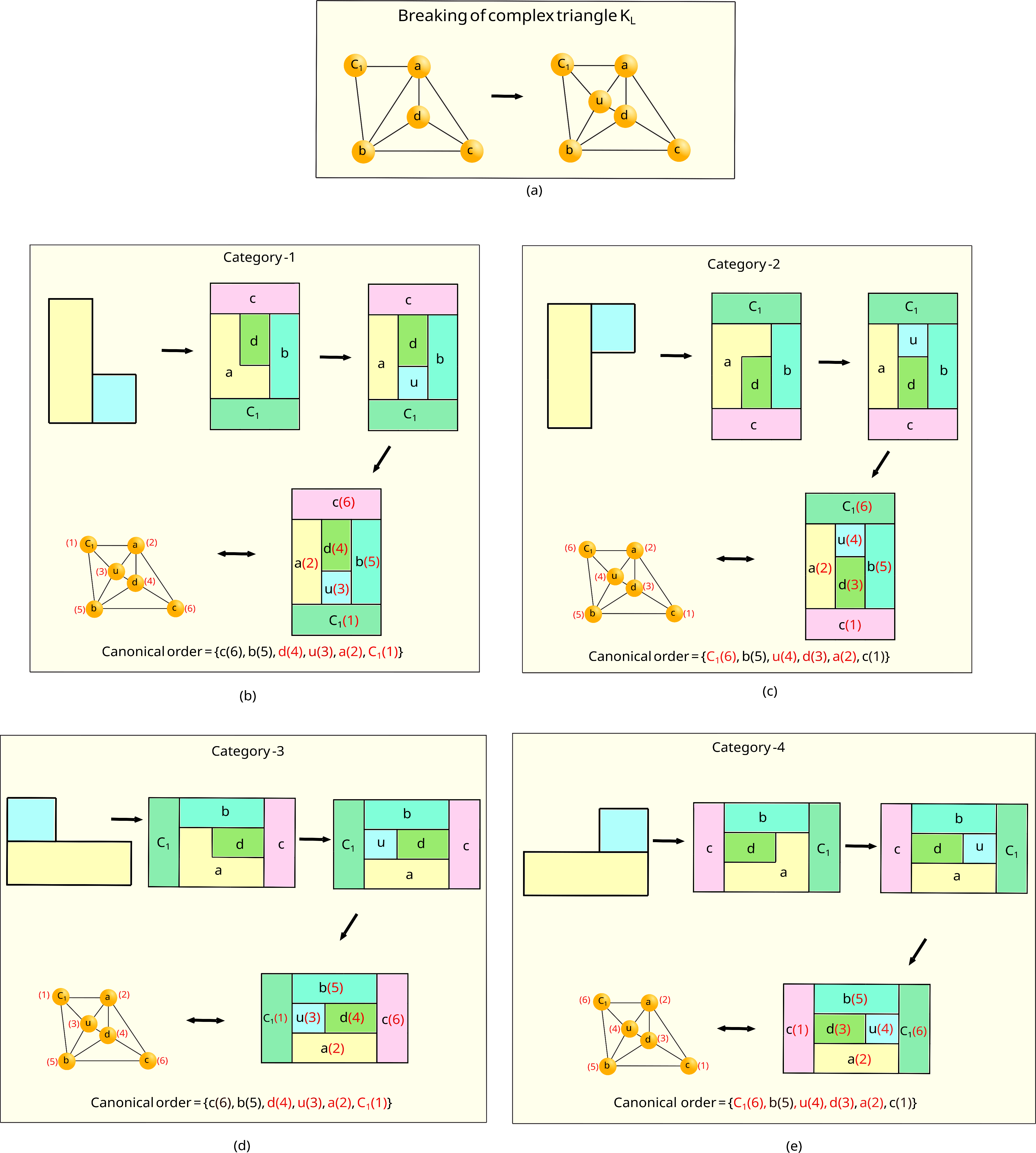}
    \caption{ (a-e) Canonical ordering associated with the $modified$ graph $K_L$ is shown for different categories (Category 1–4), each defined based on a $L$-shaped module relative to Class-1.}
   \label{L-5}
 \end{figure}
 \begin{figure}
   \centering
    \includegraphics[width=1.05\textwidth]{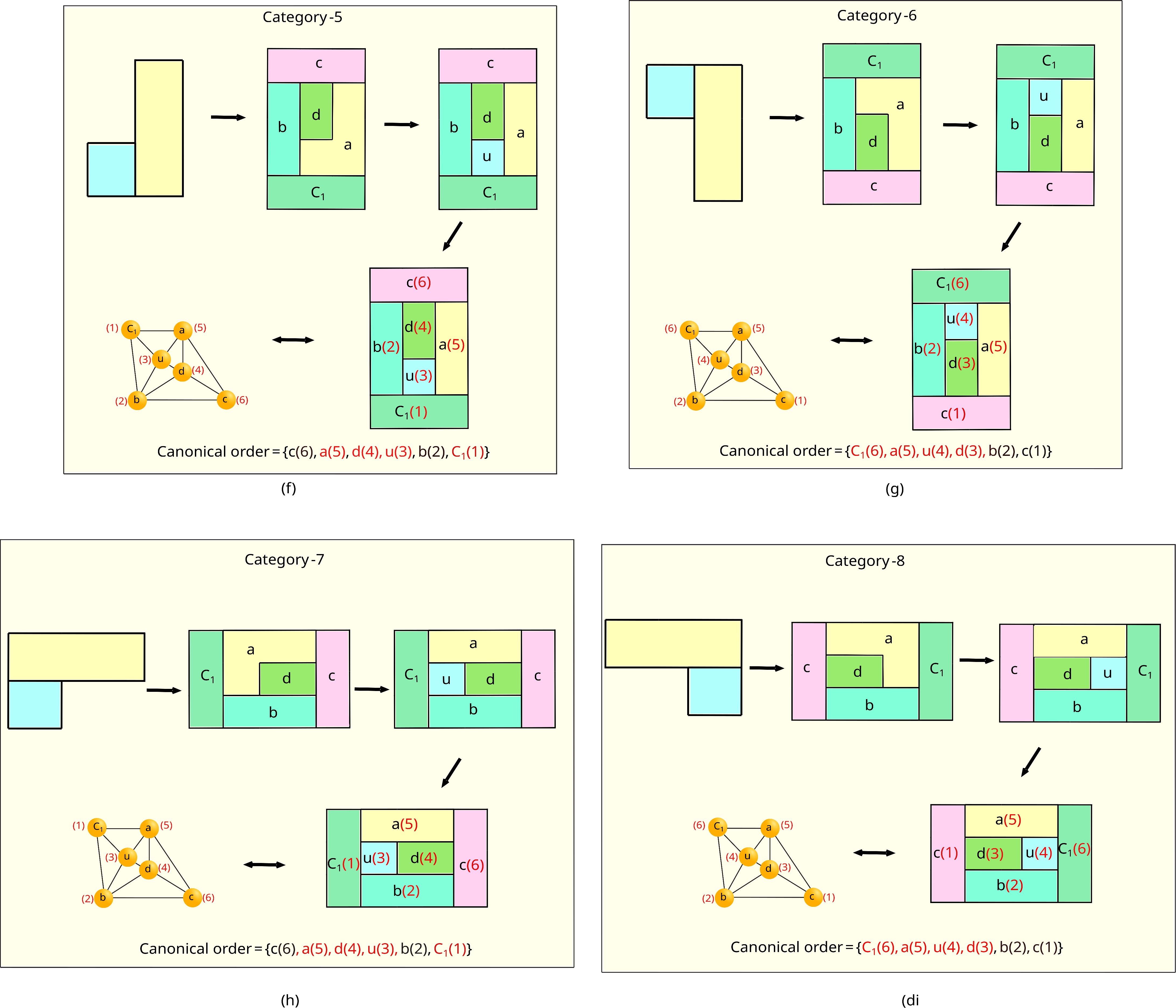}
    \caption{ (f-i) Canonical ordering associated with the $modified$ graph $K_L$ is shown for different categories (Category 5–8), each defined based on a $L$-shaped module relative to Class-1.}
   \label{L-6}
 \end{figure}
 \begin{figure}
   \centering
    \includegraphics[width=1.04\textwidth]{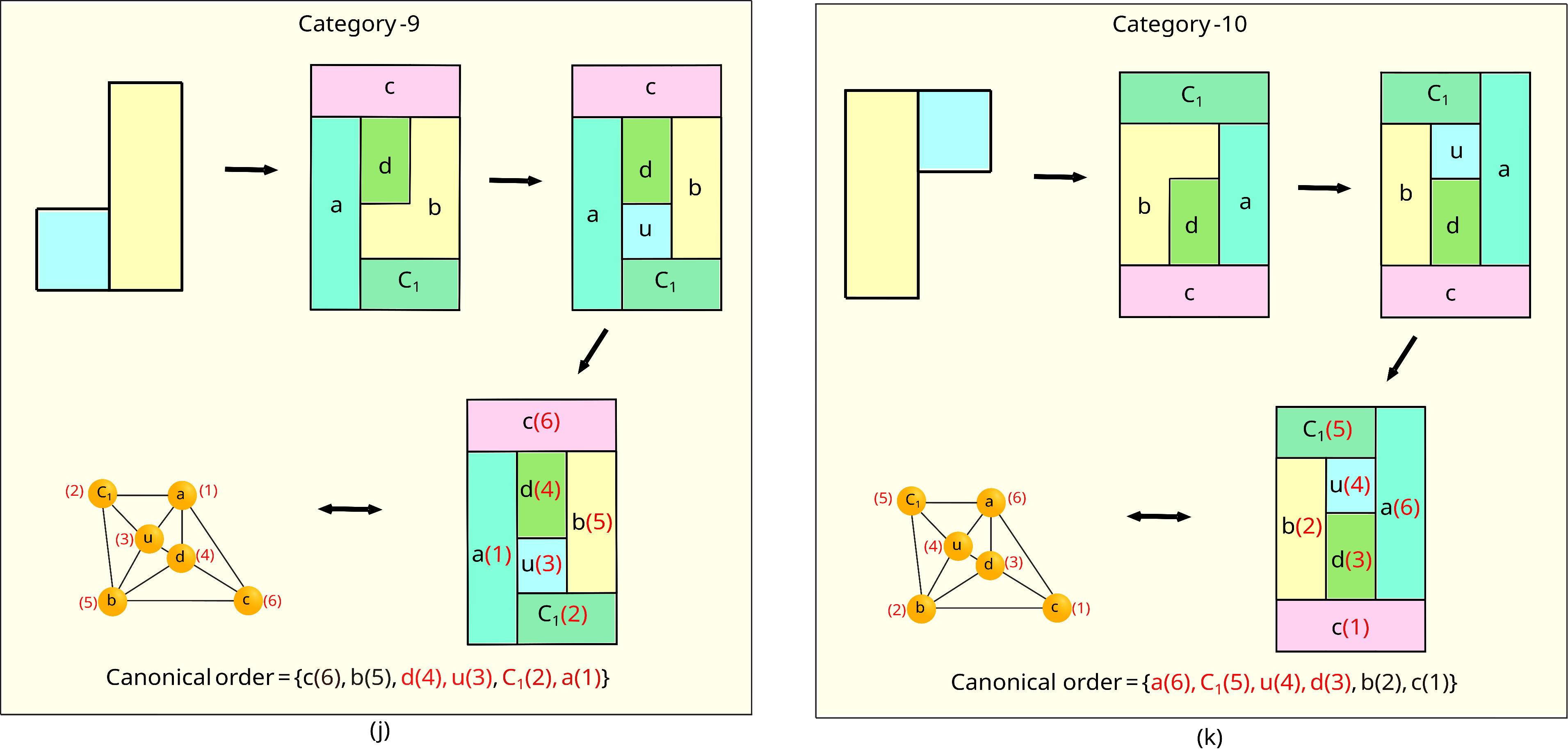}
    \caption{ (j-k) Canonical ordering associated with the $modified$ graph $K_L$ is shown for different categories (Category 9-10), each defined based on a $L$-shaped module relative to Class-1.}
   \label{L-7}
 \end{figure}
\subsection {$L$-shaped module generation within floor plan $F_L$}
\label{Category}
\begin{algorithm}
\caption{: Given a $G_L$ (triangulated graph) with an internal subgraph $K_L$, a $L$-shaped module can be constructed within the corresponding floor plan. }
\label{L-shaped}
\textbf{Input:} {A triangulated graph $G_L(V,E)$ having at least one internal subgraph isomorphic to $K_L$
 (see Figure ).} \\
\textbf{Output:} {An orthogonal floor plan $F_L$ containing a $L$-shaped module associated with an interior $K_L$ (see Figure ).}
 \algnewcommand\To{\textbf{to }}
  \begin{algorithmic}[1]
\If{\{$ \exists s_i \in ST_{L}/$\{$K_L$\}\}}
                    \State Call $Complex$ $Triangle$ $Removal$ ($G_L(V, E)$) algorithm \cite{roy2001proof}.
                 \Else
                      \State Go to line 5.
                  \EndIf
                  \State $\{C_1\} \gets \{nbd(a) \cap nbd(b)\} - \{c,d\}$, $V \gets V + \{u\}$, $E \gets E - \{(a,b)\}$,  $E \gets E + \{(u,C_1),(u,d),(u,a),(u,b)\}$, $E_1$ $=$ \{$a, b, c, d, u, C_1\}$, $G^1_L(V^1,E^1)$  $=$ $G_L(V,E)$.   
     \Comment{nbd($x$) = vertices adjacent to $x$}
       
                   \State Call $Four$-$Completion$ ($G^1_L(V, E)$) algorithm \cite{kant1997regular}, $G^1_L(V^1, E^1)$ $=$ $E \gets E + \{(N,S)\}$  
\State $ch(v) = 0$, $vi(v) = 0$, $St(v) = F$ $\forall v \in V^1$.
 \State $W$ = $v_1$, $S$ = $v_2$,  $St(W) = T$, $vi(N) = 2$ and $vi(E) = 1$,  $St(S) = T$, $L = \emptyset$, $i = n$ (number of vertices in $G^1_L$).
 
\Function{$CanonLabel$}{$G^1_{L}(V^1,E^1), PLabel(L), r$}  
   \State $i$ = $r$.
      \For{$r \gets i$ \To $3$}
      \If {$|L| > 0$} 
             \If{\{$ \exists v \in L$ s.t. $St(v) = F$, $ch(v) = 0$, $vi(v) \geq 2$, satisfies the priority order($L$)\}}
                    \State $v$ = $v_{r}$; $St(v) = T$. 
                  \Else
                \If {\{$ \exists v \in V^1 \setminus L$ s.t. $St(v) = F$, $ch(v) = 0$, $vi(v) \geq 2$\}}
            \State $v$ = $v_{r}$, $St(v) = T$.
           \Else  
           \State Go to Line 28 (break for loop).
            \EndIf   
            \EndIf
        \Else 
         \If {\{$ \exists v \in V^1\setminus E_1$ s.t. $St(v) = F$, $ch(v) = 0$, $vi(v) \geq 2$\}}
            \State $v$ = $v_{r}$, $St(v) = T$.
           \Else  
           \State Go to Line 28 (break for loop).
            \EndIf
        \EndIf

         \State {Let \{$w{_p},..., w{_q}$\} are the neighbours of $v{_{r}}$ (in this order around $v_{r}$) with $St(w{_s}) = F$ for $p\leq s \leq q$.}
        \State Increase $vi(w{_s})$ by 1 for $p\leq s \leq q$.
          \State Update $ch({v})$ for $w{_p},..., w{_q}$ and their neighbors.
     \EndFor
     \State \Return ($G_{L}^\text{cl}(V^1,E^1)$  $=$ $G^1_L(V^1,E^1)$, $r$). 
   \EndFunction 
   \Function{$Types$ $of$ $Priority$ $label_L$}{$G_{L}^\text{cl}(V^1,E^1)$, $r$} 

   \State $G_{L1}^\text{cl}(V,E)$  $=$ $G_{L}^\text{cl}(V^1,E^1)$, $r^1$ $=$ $r$.
 
    \State ($G^*$, $r^{*}$) = $CanonLabel(G_{L1}^\text{cl}(V,E),PLabel(\{C_1,u,d,a\}),r^1)$.
      \If{$r^{*} == 3$} \Comment{$r^{*} == 3$ implies each vertex of  $G^1_L$ is canonical ordered.}
       \State \Return $G^*(V,E)$ and Go to Line 53.
      \Else
          \State ($G^*$, $r^{*}$) = $CanonLabel(G_{L1}^\text{cl}(V,E),PLabel( \{d,u,a,C_1\}),r^1)$.
          
           \If{$r^{*} == 3$}
               \State  \Return  $G^*(V,E)$ and Go to Line 53.
           \Else
             \State ($G^*$, $r^{*}$) =  $CanonLabel(G_{L1}^\text{cl}(V,E),PLabel( \{d,u,C_1,a\}),r^1)$.
               \If{$r^{*} == 3$}
                 \State \Return  $G^*(V,E)$ and Go to Line 53.
               \Else
                 \State ($G^*$, $r^{*}$) = $CanonLabel(G_{L1}^\text{cl}(V,E),PLabel(\{a,d,u,C_1\}),r^1)$.
                  \If{$r^{*} == 3$}
                  \State  \Return  $G^*(V,E)$ and Go to Line 53.
                  \Else
                  \State ($G^*$, $r^{*}$) = $CanonLabel(G_{L1}^\text{cl}(V,E),PLabel(\{C_1,a,u,d\}),r^1)$.
                  \If{$r^{*} == 3$}
                  \State  \Return  $G^*(V,E)$ and Go to Line 53.
                   \Else
                 \State ($G^*$, $r^{*}$) = $CanonLabel(G_{L1}^\text{cl}(V,E),PLabel(\{a,C_1,u,d\}),r^1)$.
                  \State  \Return  $G^*(V,E)$ and Go to Line 53.
                   \EndIf
                  \EndIf
               \EndIf
         \EndIf
     \EndIf
     \EndFunction 
     
      \State  $G^2_L(V^2, E^2)$ $=$  $G^*(V,E)$.
      \State Call $REL$ $Formation$  $(G^2_L(V^2, E^2))$: $Algorithm$ \ref{RELF}. \Comment{ return $G^3_L(V^3, E^3)$, $m$}
    
     \State Call $Rectangular$ $floor plan$ $(G^3_L(V^3, E^3))$: Algorithm  \cite{roy2001proof}. \Comment{return $F'_L$}
\Function{$Merge$ $Rooms$}{$G^2_L(V^2,E^2)$, $F'_L$, $Enodes_L$, $m$} 
     \For{$u_i \in Enodes_L$}                          
           \State $M(x,a_i,F'_L)$ or $M(x,b_i,F'_L)$. \Comment{Since each $u_i$ =  ($a_i$, $b_i$) of $S_L$.}
            \EndFor
             \If{$m == 1$}
       \State  $M(u,a,F'_L)$
      \Else
          \State $M(u,b,F'_L)$ 
        \EndIf
    \State \Return $F_L$ $=$ $F'_L$
\EndFunction
\end{algorithmic} 
\end{algorithm}

This section illustrates our proposed Algorithm \ref{L-shaped} using an example where we generate a $L$-shaped module within the floor plan $F_L$ for the input graph $G_L$ with an internal subgraph $K_L$. \\\\
\textbf{1. Steps [1 to 4] of Algorithm \ref{L-shaped}} : \textbf{Complex Triangle Identification and Removal (Except $K_L$)} :\\
The method described by Roy et al. \cite{roy2001proof}  provides a way to identify and remove complex triangles from a graph. In this approach, once the complex triangles are identified, an edge of each complex triangle is selected and then subdivided by introducing a new vertex. As a result, the complex triangle is eliminated and transforms into a 4-cycle. The vertices that were located inside the original complex triangle, specifically the interior node, now form a rectangular sub-floor plan (as shown in Figure \ref{L-3}b, where the module $d$ illustrates this sub-floor plan). The four adjacent nodes are transformed into rectangular modules that surround and contain this sub-floor plan. The vertex introduced during the splitting process corresponds to a module that will eventually be merged with one of the neighbouring nodes from the original complex triangle. To eliminate all complex triangles within a graph, one must first identify a subset of edges (denoted as $S_L$) such that every complex triangle contains at least one edge from this subset. Next, new vertices are inserted along each edge in $S_L$, effectively splitting them. This systematic modification ensures the removal of all the complex triangles while preserving the triangularity in the graph.\\
Therefore, we apply the Complex Triangle identification and Removal algorithm as described in \cite{roy2001proof} to first identify all complex triangle in $G_L$ (see Figure \ref{L-9}(a-b)) and remove all complex triangles from the input triangulated graph $G_L$ (if there exist), leaving only a single complex triangle, denoted as $K_L$ (see Figure \ref{L-11}(a-c)). If any complex triangle other than $K_L$ exists, we split it by adding a new vertex $x$ and re-triangulate the input graph $G_L$. This produces an updated $G_L$ that contains no complex triangles except $K_L$ (see Figure \ref{L-11}(a-c)). \\\\
\textbf{2. Steps [5 to 6] of Algorithm \ref{L-shaped}} : \textbf{Removal of Complex Triangle $K_L$ and Four Completion Phase}:\\
To modify the remaining interior complex triangle $K_L$, we proceed by choosing the edge $(a,b)$ and eliminating it through the introduction of a new vertex $u$ (see Figure \ref{L-11}d). Subsequently, we insert new edges (i.e., $\{(u,C_1),(u,d),(u,a),(u,b)\}$) to maintain the triangulated structure of the graph. The resulting graph is denoted as $G^1_L$ (refer to Figure \ref{L-11}d). As a result of this transformation, $G^1_L$ no longer contains any complex triangles. \\
Once complex triangle elimination has been completed, the graph $G^1_L$ enters the four-completion process:  Kant and He \cite{kant1997regular} proposed a method for generating a rectangular floor plan from a bi-connected PTPG by introducing four new vertices, each representing one of the directions: East, South, West, and North. This approach requires identifying four corner vertices on the graph's outer boundary, which can be determined by applying the CIP technique introduced by Bhasker and Sahni \cite{bhasker1988linear}. The rectangular floor plan is constructed by selecting four boundary paths of the PTPG. Following the approach outlined in \cite{kant1997regular}, four paths {$P_4$, $P_3$, $P_2$, $P_1$} are first identified in $G^1_L$, after which directional vertices ($E$, $W$, $S$, $N$) are inserted into their respective paths. These inserted vertices correspond to the rectangular boundary modules that define the floor plan boundary. This procedure constitutes the four-completion phase, as illustrated in Figure \ref{L-12}(a-b).\\
After completing the four-completion phase, the edge $(N, S)$ is introduced into $G^1_L$, resulting in a 4-connected triangulated graph ($G^1_L$), as shown in Figure \ref{L-12}c. The graph $G^1_L$ then moves forward to the priority-wise canonical ordering step. \\
\begin{algorithm}
\caption{: Construction of REL-Formation from canonical ordered graph $G^2_L(V^2,E^2)$. }
\label{RELF} 
\textbf{Notations:}\\
{In a canonical ordered directed graph $G$,}\\
{1. $b_k$: $basic$-$edge$ ($b_k$) of a vertex $v_k$ is defined as the incoming edge ($v_l,v_k$) $=$ $b_k$ for which $l$ $<$ $k$ and is minimal (here $i \leq l \leq j$).}\\
{2. $C_k$: The set $C_k$ $=$ \{$v_i,.....,v_j$\} of $v_k$  is defined as the incoming edges from $v_i,.....,v_j$  belonging to $C_{k-1}$ (the exterior face of $G_{k-1}$) along the path from $v_1$ to $v_2$ in this order.}\\
{3. $R_k$: The set $R_k$ $=$ \{$v_m,.....,v_n$\} of $v_k$  is defined as the outgoing edges from $v_m,.....,v_n$ in ($G$ $-$ $G_{k-1}$) in anti-clockwise direction around $v_k$.}\\
{4. $lp_k$: $left$ $point$ of $v_k$, $rp_k$: $right$ $point$ of $v_k$, $le_k$: $left$ $edge$ of $v_k$, $re_k$: $right$ $edge$ of $v_k$. }\\
\textbf{Input :} { $G^2_L(V^2,E^2)$.}\\
\textbf{Output :} { $G^3_L(V^3,E^3)$ (REL), $m$.}
 \algnewcommand\To{\textbf{to }}

  \begin{algorithmic}[1]
 \State $E^2 \gets E^2 - \{(N,S)\}$, $T_1$ $=$ $\emptyset$ ,$T_2$  $=$ $\emptyset$.    
   
   \State For each edge \{$v_i,v_j)$\} $ \in G^2_L(V^2, E^2)$: direct $v_i \to v_j$ for i $<$ j (Except (W, S), (S, E), (E, N), (N, W)).   
     \For{$k=n-2$ to 3}   \Comment{see Figure \ref{CL}}
     
        
       \State Compute $b_k$ of $v_k$.
       \State Compute $C_k$ $=$ \{$v_i,.....,v_j$\} of $v_k$ where   $lp_k$ = $v_i$ and $rp_k$ = $v_j$. 
       \State  Compute $R_k$ $=$ \{$v_m,.....,v_n$\} of $v_k$ where $(v_i,v_m)$ $=$ $le_k$ and  $(v_i,v_n)$ $=$ $re_k$.  
    \EndFor
      \For{$k=3$ to $n-2$}   
        \State $le_k$ of $v_k \in T_1$.  
        \State $re_k$ of $v_k \in T_2$. 
        \State $b_k$ $=$ ($v_l,v_k$) of vertex $v_k \in T_2$ if $v_l$ $=$ $lp_k$ and $ \in T_1$ if $v_l$ = $rp_k$, otherwise,  $ \in T_1$ or $T_2$.
        \EndFor
        \State $(a,u) \in T_{x1}$, $(a,C_1) \in T_{x2}$, $(b,u) \in T_{x3}$, $(b,C_1) \in T_{x4}$. \Comment{$T_{x1}, T_{x2}, T_{x3}, T_{x4} \in  \{T_1,T_2\}$ i.e., x1, x2, x3, x4 $\in  \{1,2\}$}
        \If{ $x1$ $\neq x2$}
          \State $m$ $=$ $1$. 
        \ElsIf{$x3$ $\neq x4$}
          \State $m$ $=$ $2$
        \Else
        \State $\{x\} \gets \{nbd(a) \cap nbd(C_1)\} - \{u\}$
        \State $(C_1,x) \in T_{x5}$, $(a,x) \in T_{x6}$
        \If{$x2 == x5$}
        \State $(a,C_1) \in T_{x6}$, $m$ $=$ $1$. 
        \Else
        \State $(x,C_1) \in T_{x2}$, $(a,C_1) \in T_{x6}$, $m$ $=$ $1$. 
        \EndIf
         \EndIf

       
    \State \Return  $G^3_L(V^3,E^3)$ = $G^2_L(V^2,E^2)$, $m$. 
\end{algorithmic} 
\end{algorithm}
\textbf{3. Steps [7 to 53] of Algorithm \ref{L-shaped}} : \textbf{Priority wise Canonical ordering}:
This section presents Algorithm \ref{L-shaped}, which computes a canonical ordering graph $G^2_L$ as defined in Definition 4 (see Terminology Section \ref{Preliminaries}) based on a 4-connected triangulated graph $G^1$, derived through Steps 7–53 of Algorithm \ref{L-shaped}. The prioritized canonical ordering is tailored to support the generation of a $L$-shaped module in the floor plan $F_L$, with respect to the associated PTG $G_L$, which includes at least one interior complex triangle $K_L$. \\
Figures \ref{L-5}, \ref{L-6}, and \ref{L-7} depict ten distinct categories (\textbf{Category 1–10}) that characterize the canonical ordering of the modified graph $K_L$, which corresponds to a $L$-shaped module in the context of Class-1 (refer to Figure \ref{L-1}). These category types will serve as a reference framework for constructing the canonical ordering of the graph $G^1_L$ in subsequent steps. Also, we observed that when creating module $a$ as a $L$-shaped module in the final floor plan related to the subgraph $K_L$, the sharing of walls, i.e., horizontal ($T_1$) and vertical ($T_2$) between module $a$ and module $c$, and between module $b$ and modules $c$, module $C_1$, do not interfere with the formation of module $a$ as a $L$-shape. In other words, module $a$ can able to share both $T_1$ and $T_2$ with module $c$, and module $b$ can able to share $T_1$ and $T_2$ with modules $c$ and $C_1$, without preventing module $a$ from forming a $L$-shape (as illustrated in Figure \ref{L-8} under Category 1, where eight valid $L$-shaped configurations are shown). However, the wall shared between module $a$ and module $C_1$ must be fixed. As shown in Figure \ref{L-10}, if this wall is not fixed, module $a$ could instead form a $T$-shaped configuration rather than the intended $L$-shape, leading to inconsistencies in the layout. Therefore, it is not necessary to specify a fixed canonical order for vertices $b$ and $c$. From the set of vertices $\{a, b, c, d, u, C_1\}$, only the subset $\{a, d, u, C_1\}$ needs to have an assigned order based on priority during the canonical ordering process of graph $G^1_L$.\\
Refer to Figures \ref{L-5}, \ref{L-6}, \ref{L-7}: Among the ten categories (1–10) generated using Class-1 (where module $u$ is merged with $a$ as shown in Figure \ref{L-1}), only six of them produce unique canonical ordering for the vertices $\{a, u, d, C_1\}$. In some cases, the canonical order remains the same, though the Regular Edge Labeling differs. As a result, the ten categories are grouped into six broader classes (with respect to canonical order), ordered A through F:\\
\textbf{1. Category A:} $\{C_1, u, d, a\}$ – the canonical order in Category 2 matches that of Category 4.\\
\textbf{2. Category B:} $\{d, u, a, C_1\}$ – the canonical order in Category 1 matches that of Category 3.\\
\textbf{3. Category C:} $\{d, u, C_1, a\}$ – corresponds to Category 9.\\
\textbf{4. Category D:} $\{a, d, u, C_1\}$ – the canonical order in Category 5 matches that of Category 7.\\
\textbf{5. Category E:} $\{C_1, a, u, d\}$ – the canonical order in Category 6 matches that of Category 8.\\
\textbf{6. Category F:} $\{a, C_1, u, d\}$ – corresponds to Category 10.\\
Algorithm \ref{L-shaped} is structured to assign orders to the vertices, namely {$a, d, u, C_1$}, based on a predefined priority sequence outlined by \textbf{Categories A–F}. The canonical ordered graph $G^2_L$ is then constructed by executing Steps 7 through 53 of Algorithm \ref{L-shaped}, as detailed below.\\\\
\textbf{(A) Step 7 of Algorithm \ref{L-shaped})}: For each vertex $v$ in $G^1_L$, we maintain the following three variables:\\ \textbf{1. St($v$):}  A status flag indicating whether $v$ has been ordered: (T) for true, (F) for false.\\
\textbf{2. vi($v$):} The number of neighbouring vertices $u$ for which St($u$) = T, i.e., neighbours that have already been ordered.\\
\textbf{3. ch($v$):}  The count of chords connected to $v$ within the subgraph of $G^1_L$ formed by the vertex set $V^1$ excluding any vertex $u$ such that St($u$) = T.\\
Using Algorithm \ref{L-shaped}, we iteratively update vertex properties by examining their neighbours and evaluating predefined conditions. Vertices are ordered in a specific sequence, with each step updating the associated variables and determining the next candidate vertices for canonical ordering. \\
 Figures \ref{L1}-\ref{L7} provide a detailed breakdown of this ordering process for the canonical ordering of vertices in $G^1_L$.\\\\
 \textbf{(B) Steps [8 to 28] of Algorithm \ref{L-shaped}:} At the outset, in the input graph $G^1_L$, the outer boundary vertices $N$, $W$, and $E$ are canonical ordered as $v_{19}$, $v_1$, and $v_2$, respectively. The external face of $G^1_{L_{18}}$ corresponds to the cycle $C^L_{18}$ (refer to Figure \ref{L1}(ii)). The process begins by identifying the unordered neighbors of vertex $v_{19}$ along the cycle $C^L_{18}$ i.e., vertices $E$, $3$, $2$, and $1$. For each of these, we update two attributes: ch($v$) and vi($v$). Following this update, vertex $E$ satisfies the required criteria for canonical ordering: ch($E$) = 0 and vi($E$) = 2. Consequently, $E$ is canonically ordered as $v_{18}$, and the relevant properties for its neighbouring vertices are adjusted accordingly (see Figure \ref{L1}(iii)).\\
 Next, we proceed with the step-by-step canonical ordering of the remaining unmarked vertices in $G^1_L$ (excluding those in the set \{$a, b, c, d, u, C_1\}$). At each step, we identify an unordered vertex $v$ that meets the following criteria: $ch(v)$ = 0 (indicating no internal chords) and $vi(v)$ $\geq 2$ (i.e., it has at least two already ordered neighbours). If multiple vertices satisfy these conditions, one is selected, and the canonical order is $v_r$ (for the $r$th step of ordering). After ordering $v_r$, we update its neighbours' properties, specifically, the chord and visit status on the current boundary cycle $C^L_{k-1}$. The process then repeats to find and order the next suitable vertex, $v_{r-1}$ (refer to Figures \ref{L1} to \ref{L3}(ix)). This iterative ordering continues until only the next possible vertex to label is from the set \{$a, b, c, d, u, C_1\}$ (see Figure \ref{L3}(x)). At this point, the canonical ordering process paused for transition into the next stage (since we have to move to the next step, where we will order the remaining vertex with respect to the defined priority order), and the resulting ordered graph is denoted as $G_{L1}^\text{cl}(V^1, E^1)$ (see Figure \ref{L3}(x)).\\\\
 \textbf{Point A:} In Steps 8 through 28 of Algorithm \ref{L-shaped}, the main aim is to canonical order as many vertices of $G^1_L$ as possible, excluding the specific set  \{$a, b, c, d, u, C_1\}$. However, certain vertices outside this set may remain unordered due to constraints like having a count ($ch(v)$) of one or a vertex ($vi(v)$) less than one, for instance, vertices 1, 6, and 13 in Figure \ref{L3}(ix). Thus, the initial goal is to canonical order the majority of vertices in $G^1_L$, leaving out only the designated set \{$a, b, c, d, u, C_1\}$. \\\\
 \textbf{Point B:} Procedure for ordering the set \{$a, b, c, d, u, C_1\}$ vertices: 
For a set of \{$a, b, c, d, u, C_1\}$ vertices, a distinct canonical ordering process is used by calling function ($Types$ $of$ $Priority$ $label_L$) in Algorithm \ref{L-shaped} (see Figures \ref{L3} to \ref{L7}). These vertices are canonically ordered with respect to any one of the six Categories (Categories A-F). We will try to order the set \{$a, d, u, C_1\}$ vertices using any one of the six categories, while checking one by one (since it is not necessary to specify a fixed canonical order for vertices $b$ and $c$: explained above). \\ 
Suppose a valid ordering of the vertices \{$a, d, u, C_1\}$ can be achieved under any of the defined categories (A through F). In that case, we will canonical order it and forward the generated graph $G^2_L$ for constructing the corresponding regular edge labeling.\\\\ 
\textbf{(C) Steps [29 to 53] of Algorithm \ref{L-shaped}: Vertex Ordering in $G^1_L$ (including the set {$a, b, c, d, u, C_1$}):}\\
Given the previously obtained canonically ordered graph $G_{L1}^\text{cl}(V^1, E^1)$, we begin by attempting to assign an ordering to the subset \{$a, d, u, C_1\}$ in accordance with each of the six predefined categories (A–E), evaluated sequentially (refer to Figure \ref{L3}(xi)). The input graph does not satisfy the structural conditions required for Category A: see Figure \ref{L3} (xi)-(xiv) (though if such a match were found for a different graph, it would proceed to the regular edge labeling stage). A similar failure occurs when tested with respect to Category B (see Figure \ref{L4}(xv)-(xvii)). Subsequently, the graph is evaluated for compatibility with Category C, which also proves unsuccessful (see Figure \ref{L4}(xviii)-(xx)). Finally, the ordering is examined under Category D, where the required conditions are satisfied (see Figures \ref{L5} to \ref{L7}). The resulting canonically ordered graph is then designated as $G^2_L$. \\
Figures \ref{L1} through \ref{L7}  present a step-by-step breakdown of the process used to derive the canonical ordered graph $G^2_L$ from the initial input graph $G^1_L$, following Steps 7 to 53 of the proposed Algorithm \ref{L-shaped}. The resulting graph is now forward for the subsequent regular edge labeling phase.\\\\
\textbf{4. Step 54 of Algorithm \ref{L-shaped}: Generation of Regular Edge Labeling:}\\
To generate a rectangular floor plan of $G^2_L$, we will apply Algorithm \ref{RELF}, which constructs a regular edge labeling for $G^2_L$ based on its generated canonical ordering, following the methodology outlined in \cite{kant1997regular}.\\\\
\textbf{Step [1-6] of Algorithm \ref{RELF}:} Firstly computes the directed edges for $G^2_L$ (see Figure \ref{RELL1}(a-b)), and then generates a list of basis edges $b_k$, along with the corresponding sets $C_k$ and $R_k$ for all vertices (see Figure \ref{RELL1}c).\\\\
\textbf{Step [7-10] of Algorithm \ref{RELF}:} Using the basis edges $b_k$ along with the sets $C_k$ and $R_k$ for each vertex, a regular edge labeling (REL), consisting of $T_1$ and $T_2$, is constructed for $G^2$ (see Figure \ref{RELL1}d).\\\\
\textbf{Step [11-23] of Algorithm \ref{RELF}:} We will examine which type of wall i.e., either $T_1$ or $T_2$ is shared between module $a$ and $C_1$, as well as between module $a$ and $u$, and do the same for module $b$ (refer to Figure \ref{RELL1}d). To ensure that module $a$ forms a $L$-shaped structure, it must share different types of walls with $C_1$ and $u$, meaning one connection should fall under $T_1$ and the other under $T_2$. There are three possible cases to consider. We evaluate each case individually and determine the value of $m$, which indicates whether module $u$ should be merged with module $a$ or with module $b$.\\\\
\textbf{Case 1:}  If module $a$ shares walls of opposite types ($T_1$ and $T_2$) with both $u$ and $C_1$, then set $m = 1$, indicating that module $u$ should be merged with module $a$ (  see Figure \ref{RELL1}d: with respect to input, Case 1 exist). \\
\textbf{Case 2:} If Case 1 does not apply, but module $b$ shares walls of different types with $u$ and $C_1$, then set $m = 2$, meaning module $u$ should be merged with module $b$ (see Figure \ref{L-7}j). \\
\textbf{Case 3:} If neither Case 1 nor Case 2 is satisfied, we modify the edge ($x$, $C_1$) (Case-A) or ($x$, $C_1$) and then ($a$, $C_1$) (Case-B): by flipping it ($T_1$ or $T_2$), where $x$ = \{$ \text{nbd}(a) \cap \text{nbd}(C_1)\}$ - \{$u\}$. After this adjustment, set $m = 1$ (refer to Figure \ref{CL-6}(e-h)).\\
This results in generation of Regular Edge Labeling $G^3_L$ with returning $m$ value (either 1 or 2).\\\\
\textbf{5. Step 55 of Algorithm \ref{L-shaped}: Generation of Rectangular Floor plan:}\\
Once we have generated the Regular Edge Labeling ($G^3_L$) using the canonical order, we use this REL to construct a rectangular floor plan ($F'_L$) for the generated graph $G^2_L$.
This is done by following the method described by Bhasker and Sahni \cite{bhasker1988linear} (see Figure \ref{REL-L2}a).\\\\
\textbf{6. Steps [56 to 63] of Algorithm \ref{L-shaped}: Generation of $L$-shaped Module within Orthogonal Floor plan by Merging Modules:}\\
The $Merge$ $Rooms$ function describes the method for integrating rectangular modules that correspond to extra vertices introduced during the removal of complex triangles in $G^1_L$. It requires four inputs: the rectangular floor plan $F'_L$ derived from the graph $G^2_L$, the canonical ordered graph $G^2_L$ itself, a set of extra nodes $Enodes_L$, and a parameter $m$, which determines whether the extra module $u$ should be combined with module $a$ or module $b$. The outcome is a $L$-shaped module in the final floor plan that aligns with the structure of $K_L$.\\
See Figure \ref{REL-L2}(b-c) (here $S_L$ = $(12, 3)$ and $Enodes_L$ = $x$: merging module $x$ with $3$ and module $u$ with $a$), where we obtained the floor plan $F_L$ with a $L$-shaped module from a rectangular floor plan $F'_L$ while using the $function$ $Merge$ $Rooms$.\\\\
\textbf{Hence, given a $G_L$ (triangulated graph) with an internal subgraph $K_L$, a $L$-shaped module can be constructed within the corresponding floor plan $F_L$ using our proposed Algorithm \ref{L-shaped}}.

 \begin{figure}
   \centering
    \includegraphics[width=1\textwidth]{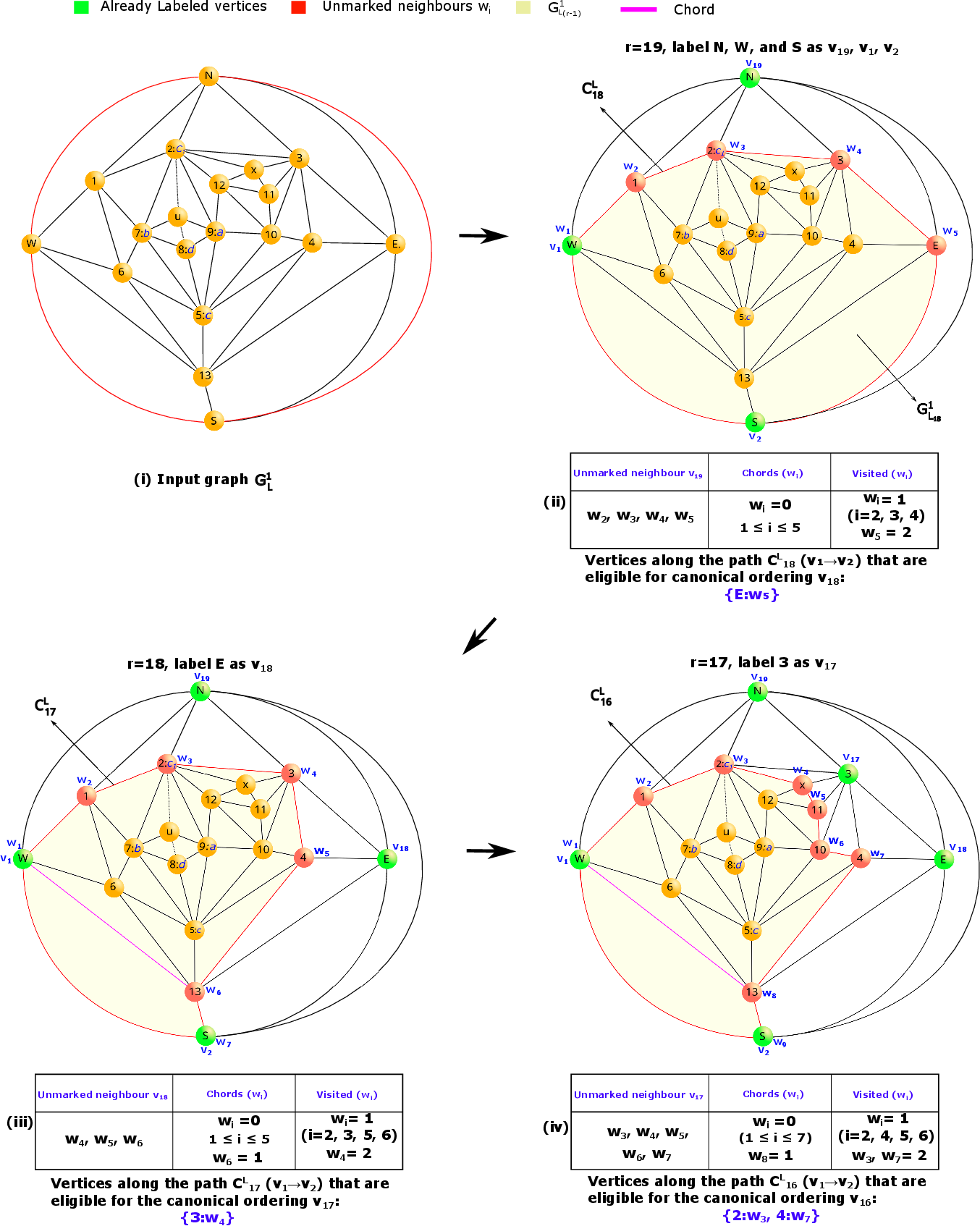}
    \caption{(i-iv) Step-by-step canonical ordering for the input graph $G^1_L.$}
   \label{L1}
 \end{figure}

  \begin{figure}
   \centering
    \includegraphics[width=1\textwidth]{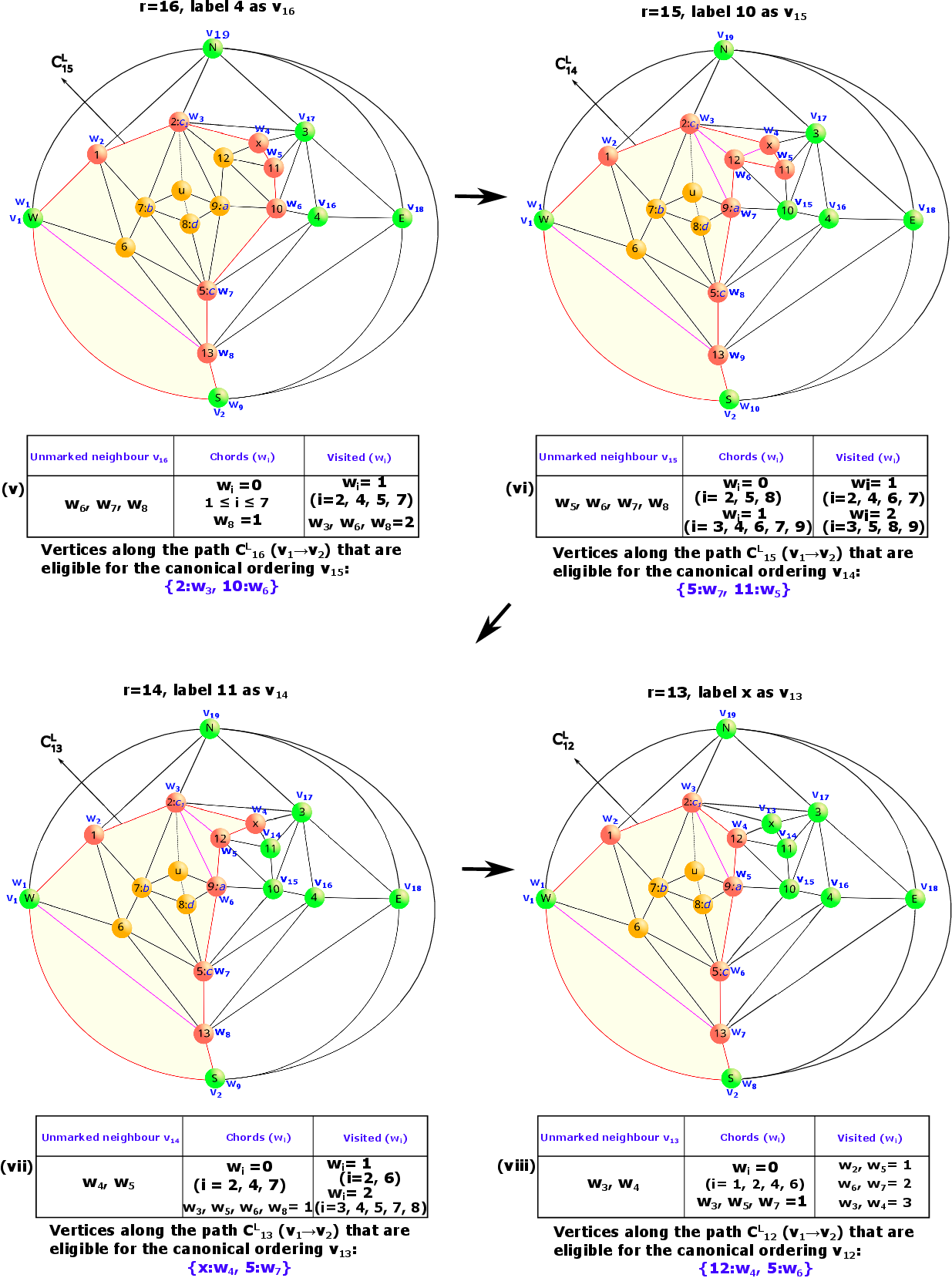}
    \caption{(v-viii) Step-by-step canonical ordering for the input graph $G^1_L.$}
   \label{L2}
 \end{figure}

   \begin{figure}
   \centering
    \includegraphics[width=1\textwidth]{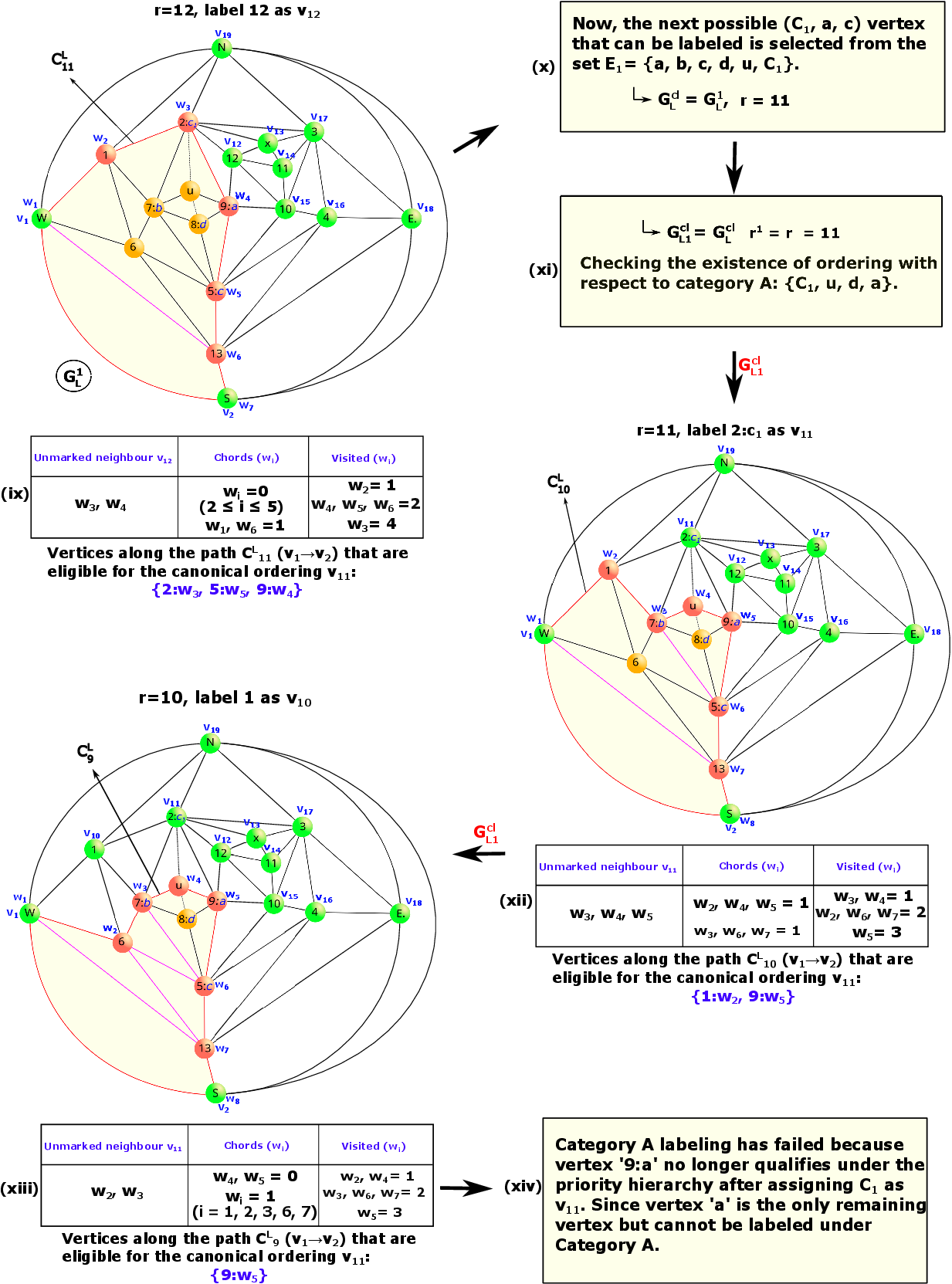}
    \caption{(ix-xiv) Checking the existence of a canonical ordering associated with category A}
   \label{L3}
 \end{figure}

  \begin{figure}
   \centering
    \includegraphics[width=1\textwidth]{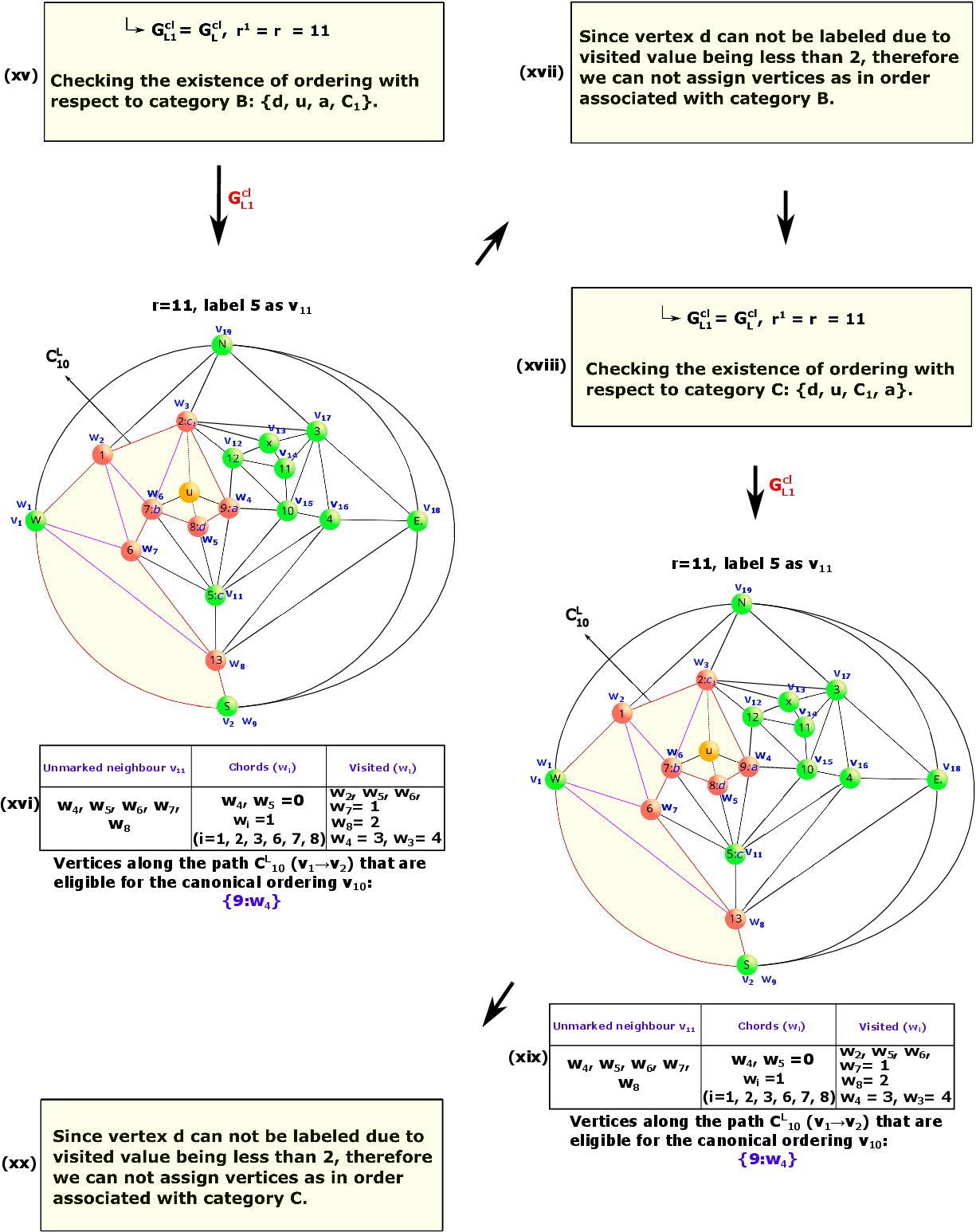}
    \caption{(xv-xvii) Checking the existence of a canonical ordering associated with category B. (xviii-xx) Checking the existence of a canonical ordering associated with category C.}
   \label{L4}
 \end{figure}

   \begin{figure}
   \centering
    \includegraphics[width=1\textwidth]{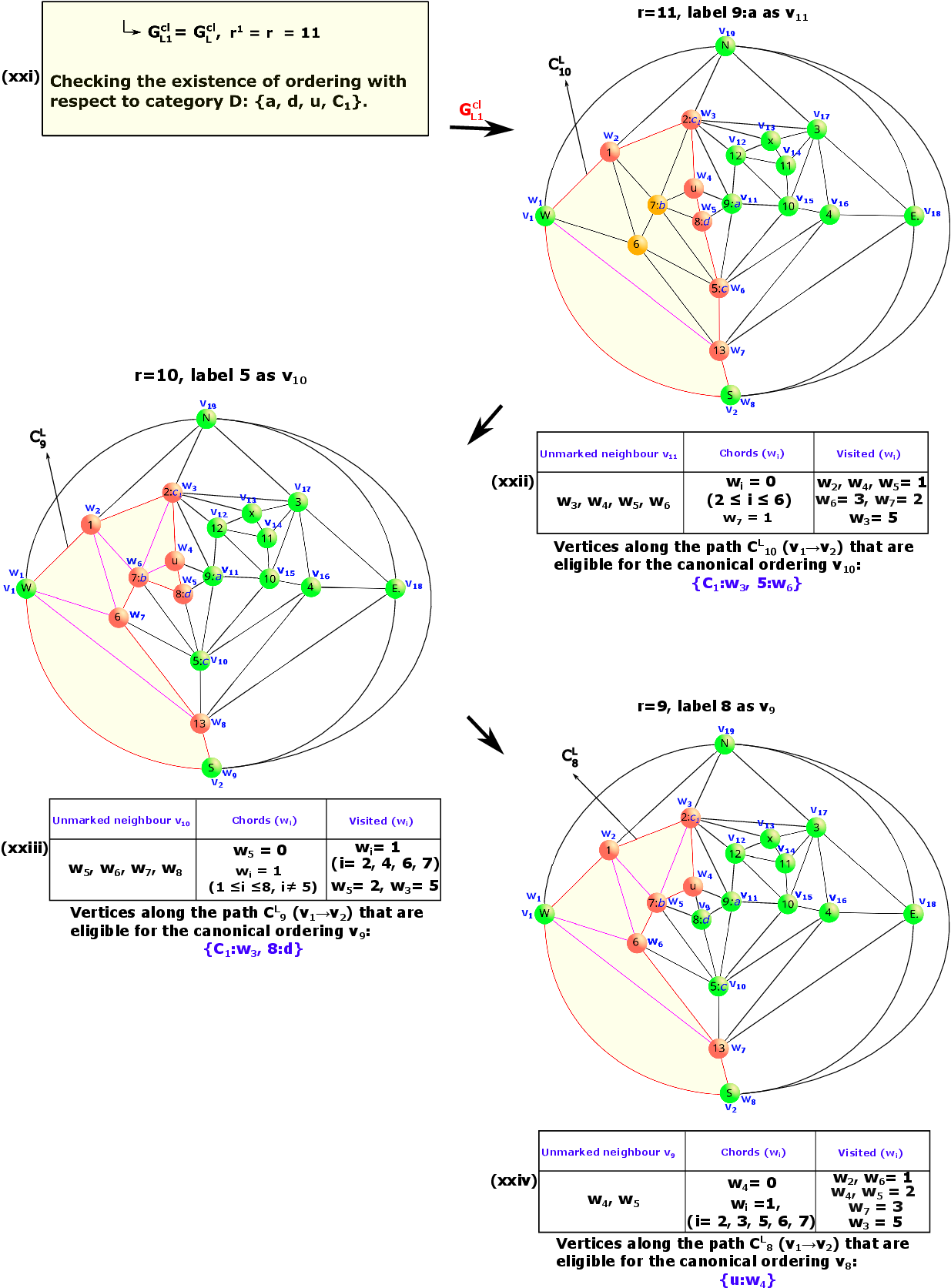}
    \caption{(xxi-xxiv) Checking the existence of a canonical ordering associated with category D.}
   \label{L5}
 \end{figure}

 \begin{figure}
   \centering
    \includegraphics[width=1\textwidth]{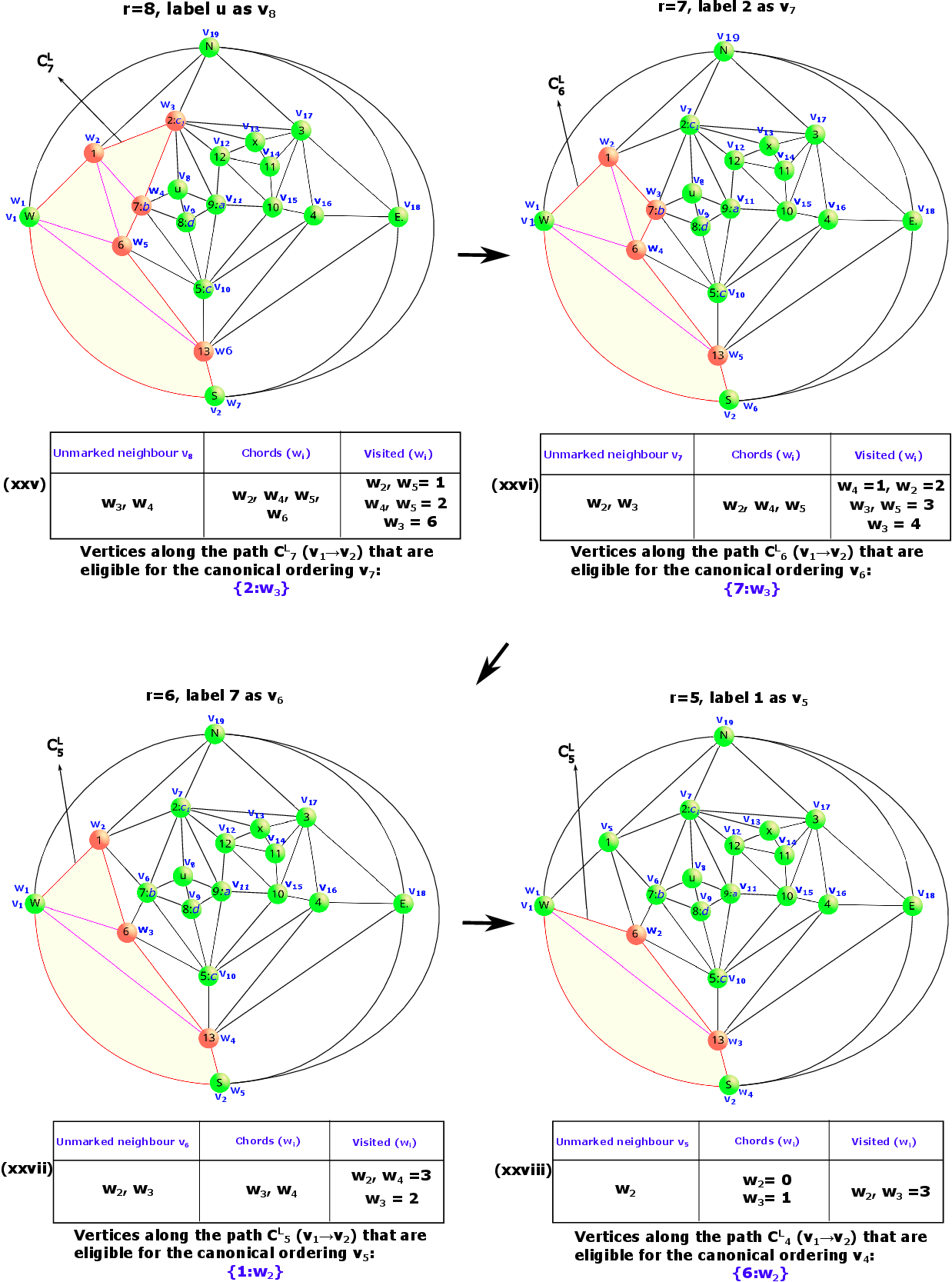}
    \caption{(xxv-xxviii) Assigning a priority-wise order to the vertices in accordance with category D.}
   \label{L6}
 \end{figure}

 \begin{figure}
   \centering
\includegraphics[width=0.9\textwidth]{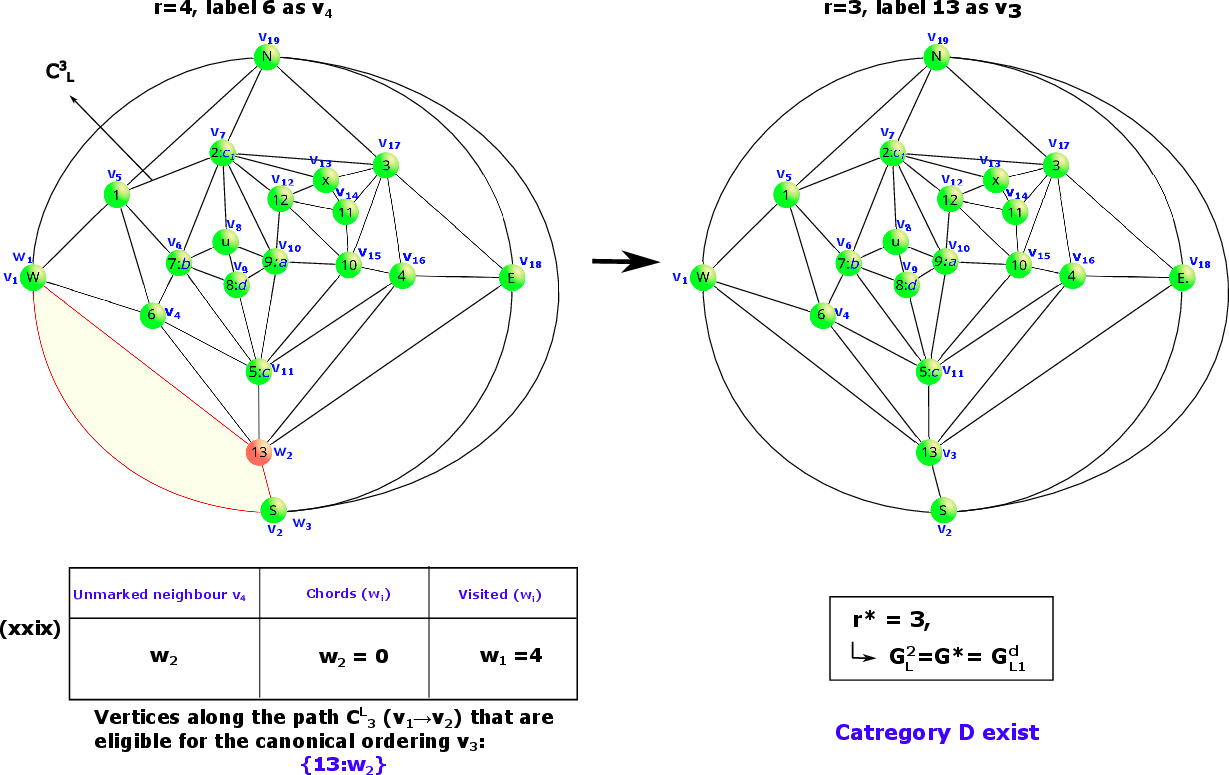}
    \caption{Category D exists. Hence obtained a canonical ordered graph $G^2_L$
.}
   \label{L7}
 \end{figure}
   \begin{figure}
    \centering
    \includegraphics[width=0.5\textwidth]{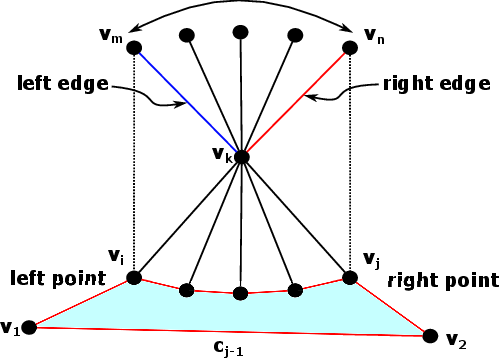}
    \caption{For each $j$, the outer face boundary of $G_{j−1}$ along with the adjacent vertices of $v_j$.}
   \label{CL}
 \end{figure}
\begin{algorithm}[H]

\caption{: Given a $G_T$ (triangulated graph) with an internal subgraph $K_T$, a $T$-shaped module can be constructed within the corresponding floor plan.}
\label{TLabel}
\textbf{Input:} { A $G_T$ (triangulated graph) with an internal subgraph $K_T$ (see Figure).} \\
\textbf{Output:} {An orthogonal floor plan $F_T$ containing a $T$-shaped module associated with an interior $K_T$ (see Figure ).}
 \algnewcommand\To{\textbf{to }}
  \begin{algorithmic}[1]
\If{\{$ \exists s_i \in ST_{T}/$\{$K_T$\}\}}
                    \State Call $complex$ $Triangle$ $Removal$ ($G_T(V, E)$) algorithm \cite{roy2001proof}.
                 \Else
                      \State Go to line 5.
                  \EndIf
                  \State $E \gets E - \{(a,c)\}$, $V \gets V + \{u\}$, $E \gets E + \{(u,e),(u,f),(u,a),(u,c)\}$, $G^1_T(V^1,E^1)$  $=$ $G_T(V,E)$.

                   \State Call $Four$-$Completion$ ($G^1_L(V, E)$) algorithm \cite{kant1997regular}, $G^1_T(V^1, E^1)$ $=$ $E \gets E + \{(N,S)\}$  
\State $ch(v) = 0$, $vi(v) = 0$, $St(v) = F$ $\forall v \in V^1$.
 \State $W$ = $v_1$, $S$ = $v_2$,  $St(W) = T$, $vi(N) = 2$ and $vi(E) = 1$,  $St(S) = T$.
 \State  $t = n$ (number of vertices in $G^1_T$).
\Function{$CanonLabel_T$}{$G^1_T(V^1,E^1), t$}  
      \For{$t \gets i$ \To $3$}
          \State $ \exists v \in V^1$ s.t. $St(v) = F$, $ch(v) = 0$, $vi(v) \geq 2$\ \textbf{then}
            \State $v$ = $v_t$, $St(v) = T$.
            \State {Let \{$w{_p},..., w{_q}$\} are the neighbours of $v{_t}$ (in this order around $v{_t}$) with $St(w{_r}) = F$ for $p\leq r \leq q$.}
            \State Increase $vi(w{_r})$ by 1 for $p\leq r \leq q$.
            \State Update $ch({v})$ for $w{_p},..., w{_q}$ and their neighbours.
     \EndFor
     \State \Return $G^2_T(V^2, E^2)$ $=$ $G^1_T(V^1,E^1)$
   \EndFunction 
      \State Call $REL$ $Formation$  $(G^2_T(V^2, E^2))$: Algorithm \cite{kant1997regular}. \Comment{ return $G^3_T(V^3, E^3)$}
    
     \State Call $Rectangular$ $floor plan$ $(G^3_L(V^3, E^3))$: Algorithm  \cite{kant1997regular}. \Comment{return $F'_T$}
\Function{$Merge$ $Rooms$}{$G^2_T(V^2,E^2)$, $F'_T$, $Enodes_T$} 
     \For{$u_i \in Enodes_T$}                          
           \State $M(x,a_i,F'_T)$ or $M(x,b_i,F'_T)$. \Comment{Since each $u_i$ =  ($a_i$, $b_i$) of $S_{T}$.}
            \EndFor
            
             \State $M(u,a,F'_T)$ or $M(u,c,F'_T)$.
    \State \Return $F_T$ $=$ $F'_T$
\EndFunction
\end{algorithmic} 
\end{algorithm}

   \begin{figure}
   \centering
    \includegraphics[width=1\textwidth]{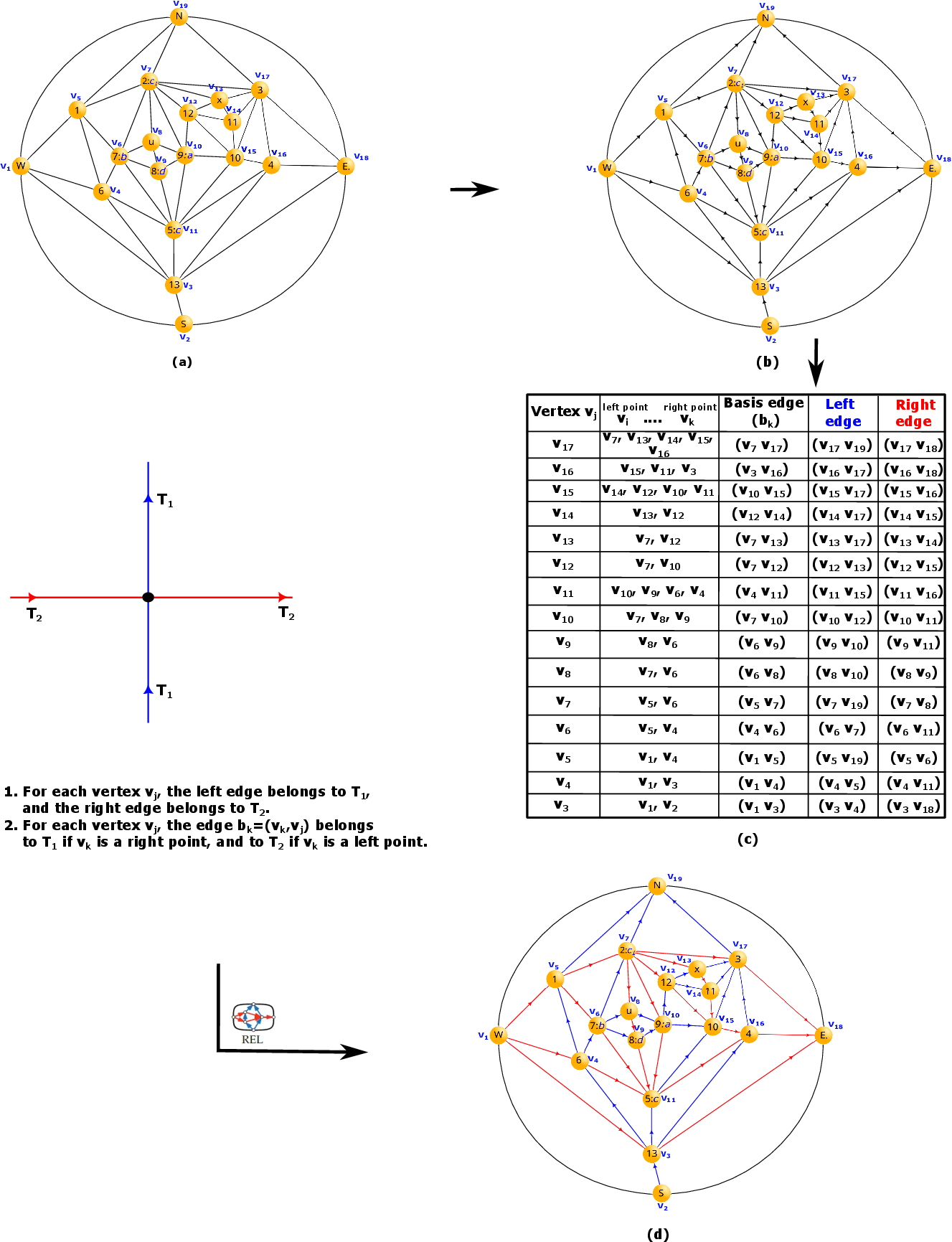}
    \caption{REL construction of the canonical ordered graph $G^2_L(V^2, E^2)$.}
   \label{RELL1}
 \end{figure}

 \subsection{An Overview of Our Proposed Work for $T$- Shaped Module Generation}
 \begin{figure}
   \centering
    \includegraphics[width=0.8\textwidth]{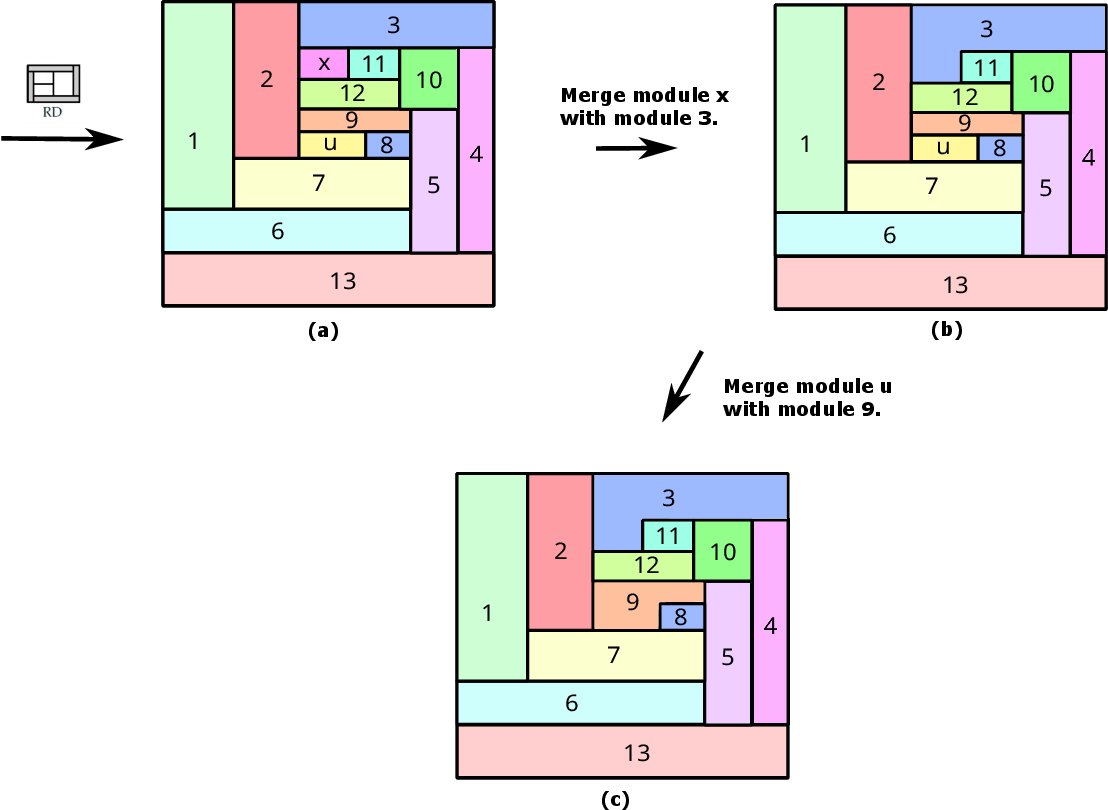}
    \caption{Getting coordinates for Rectangular dual corresponding to $G^3_L(V^3, E^3)$.}
   \label{REL-L2}
 \end{figure}

Different rectangular floor plans (RFPs) are constructed by altering the $K_T$ subgraph through subdividing its edges, following the approach outlined in \cite{kant1997regular}. Once the additional components (i.e., vertices) introduced by removing a complex triangle $K_T$ are combined, a variety of floor plans emerge, containing both simple (trivial: wall shrink to become $L$-shape module) and $T$-shaped modules (refer to Figure \ref{T-5}). This observation indicates that merely splitting any edge of a complex triangle does not guarantee the appearance of a  $T$-shaped module within the resulting floor plan $F_T$. Consequently, if a graph includes $K_T$ as a subgraph, further processing or techniques are necessary to produce a targeted $T$-shaped module in the final design.\\
Furthermore, we observe that by specifically subdividing a certain internal edge of $K_T$, it becomes feasible to generate a  $T$-shaped module within the floor plan $F_T$. Our goal is to design an algorithm (i.e.,  $Algorithm$ \ref{TLabel}) that produces a $T$-shaped module for any triangulated plane graph $G_T$ that contains at least one interior subgraph $K_T$.

\begin{figure}
   \centering
    \includegraphics[width=0.90\textwidth]{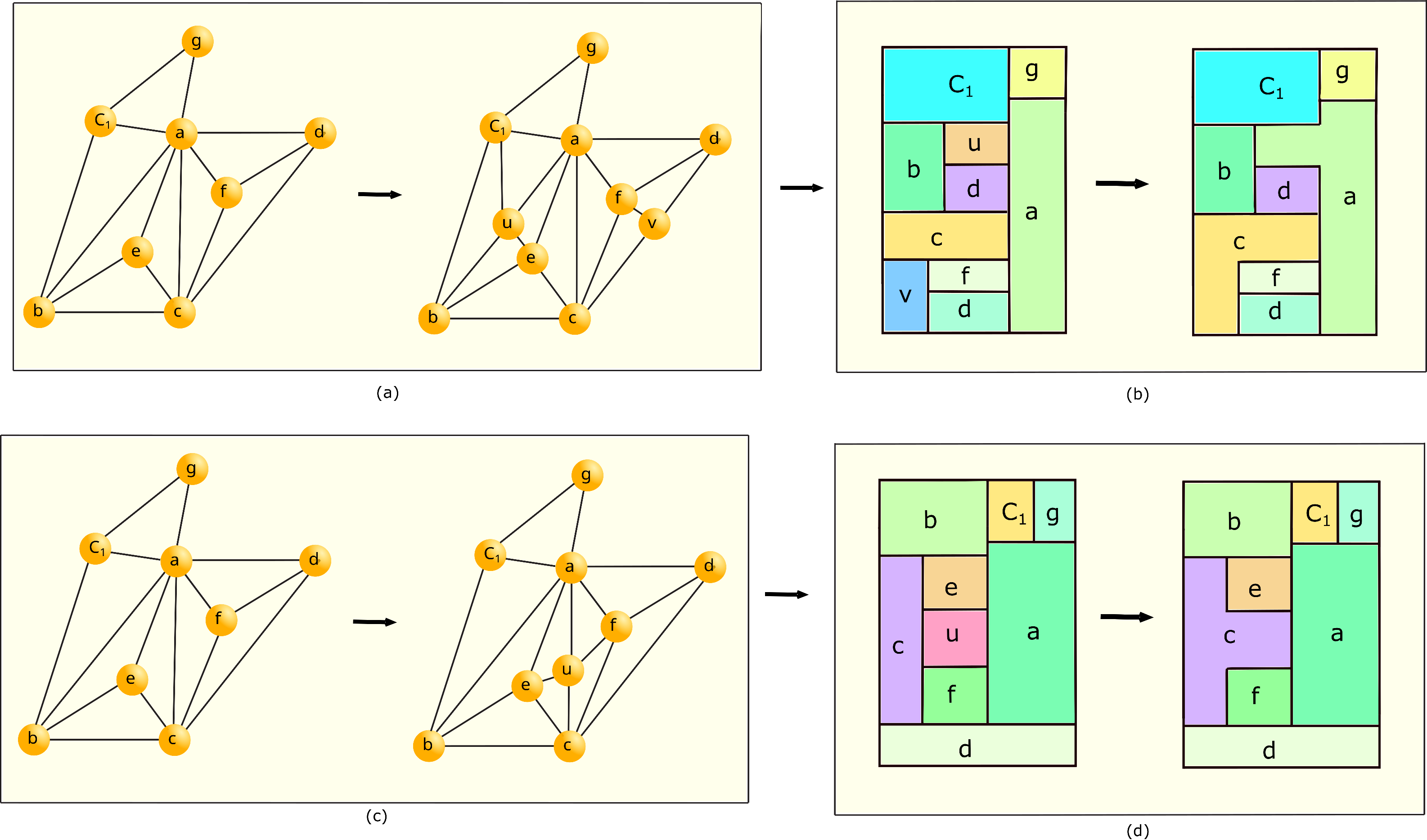}
    \caption{(a) Inserting additional vertices $u$ and $v$ into the input graph $K_T$ for complex triangle removal. (b) A trivial $T$-shaped module and a $L$-shaped module are generated in the resulting floor plan. (c) Inserting an additional vertex $u$ into $K_T$. (d) A $T$-shaped module is generated in the resulting floor plan.}
   \label{T-5}
 \end{figure}
   \begin{figure}
   \centering
    \includegraphics[width=0.65\textwidth]{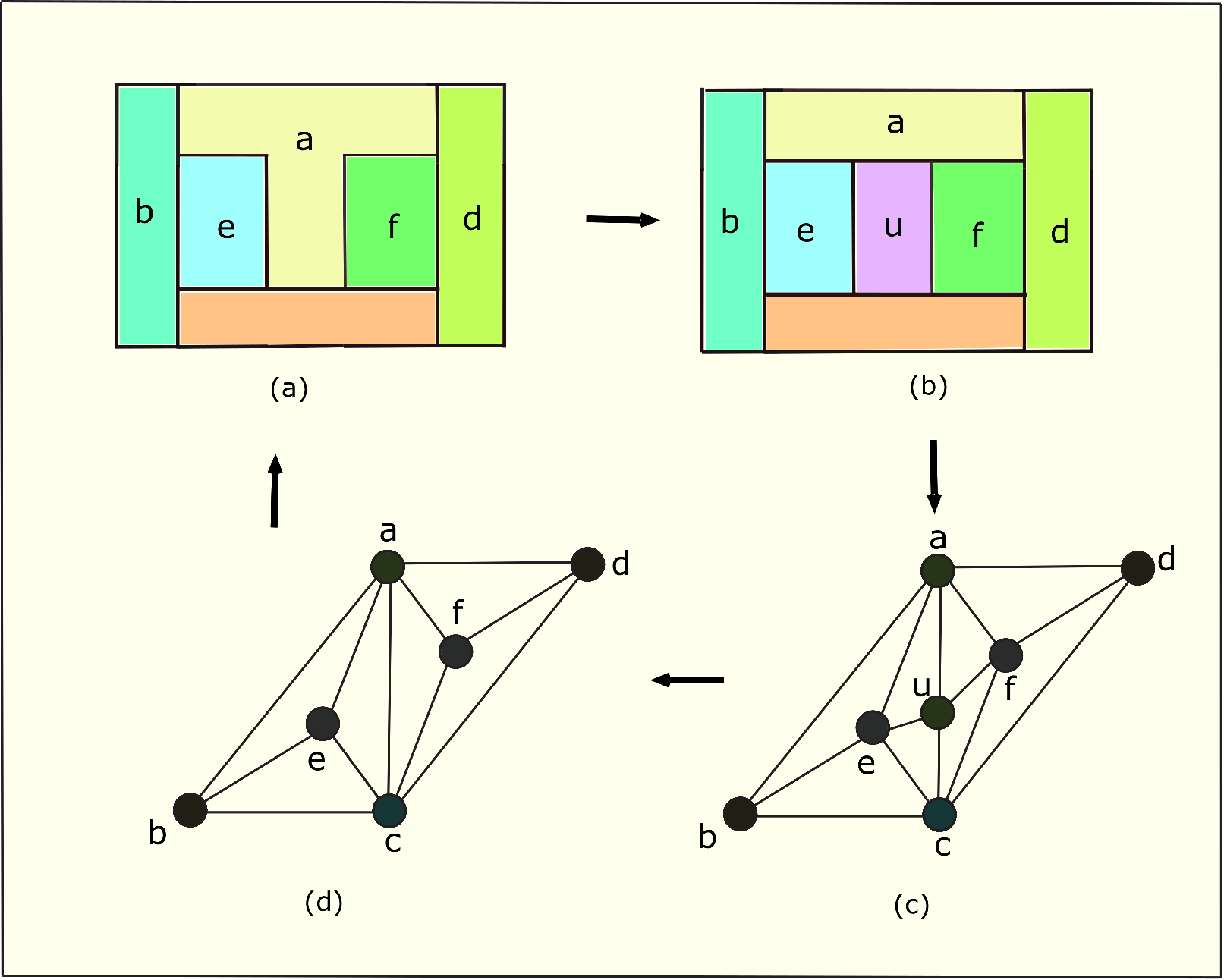}
    \caption{ (a-d) Requirement of a subgraph $K_T$ for the  generation of a $T$-shape module within floor plan.}
   \label{T-3}
 \end{figure}
 \begin{figure}
   \centering
    \includegraphics[width=0.80\textwidth]{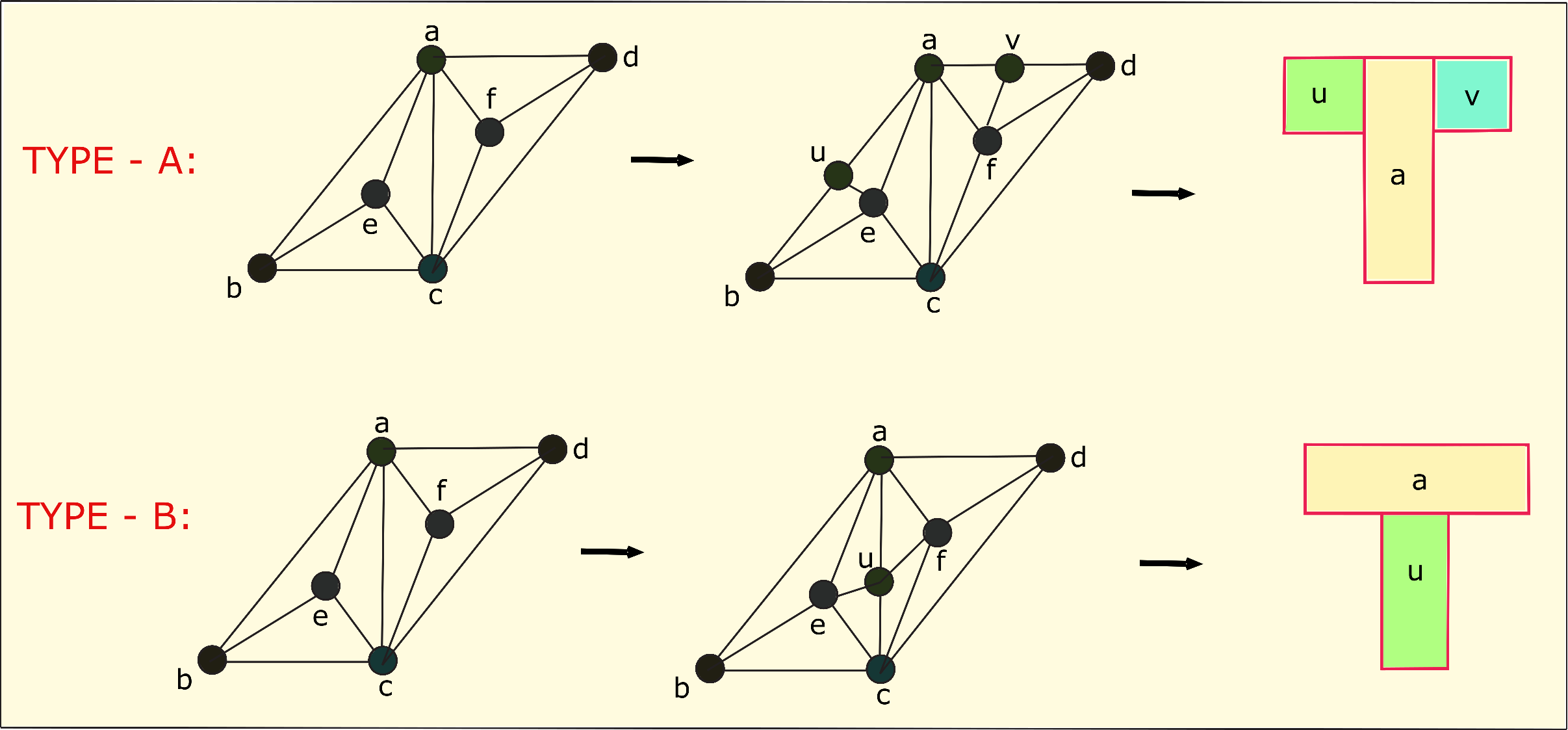}
    \caption{$T$-shape module generation through two different ways (Type A and Type-B).}
   \label{T-1}
 \end{figure}
 \begin{figure}
   \centering
    \includegraphics[width=0.80\textwidth]{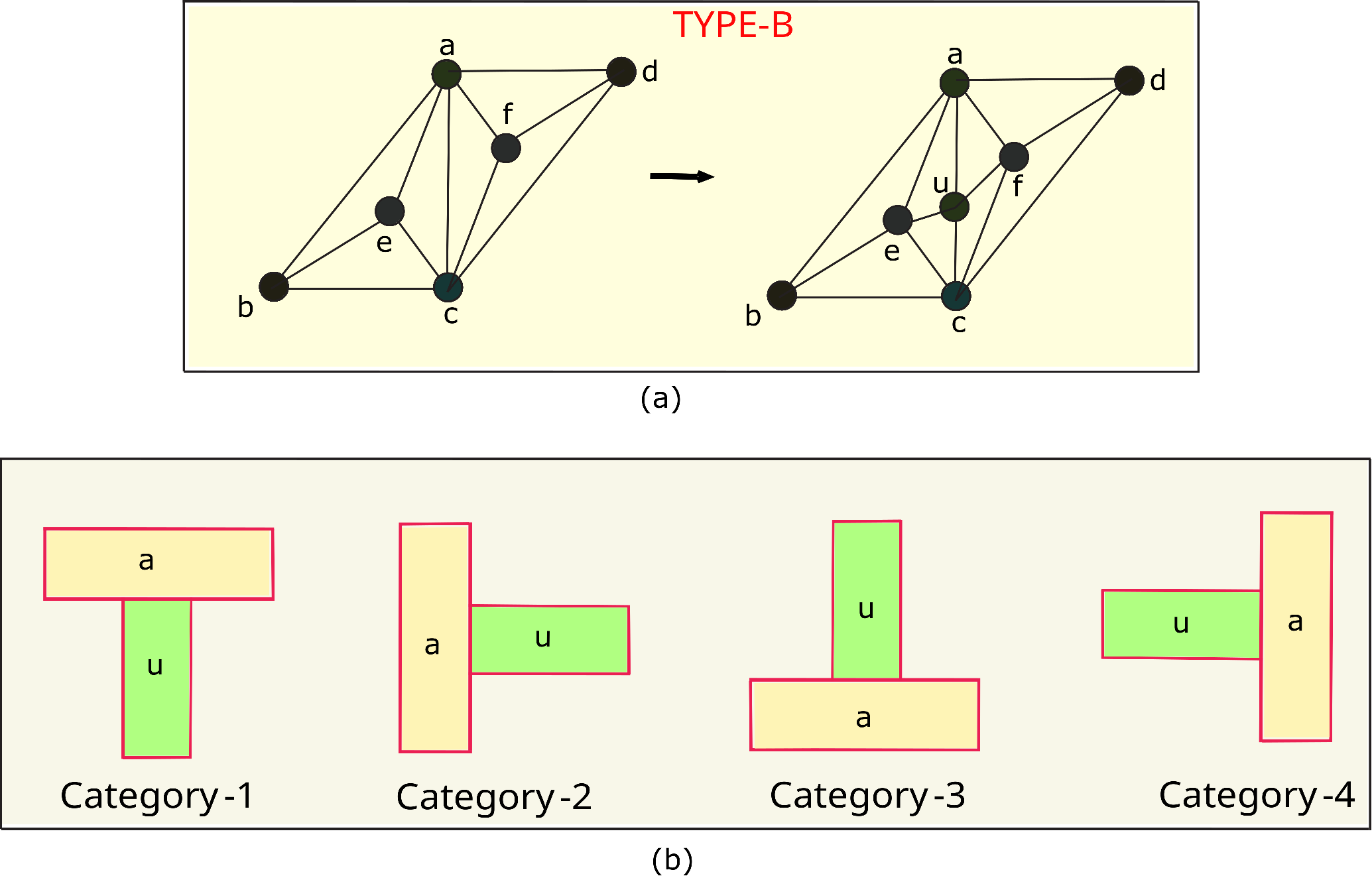}
    \caption{ (a-b) Several ways for merging $u$ with $a$ (Category: 1-4).}
   \label{T-2}
 \end{figure}
 \begin{figure}
   \centering
\includegraphics[width=1.0\textwidth]{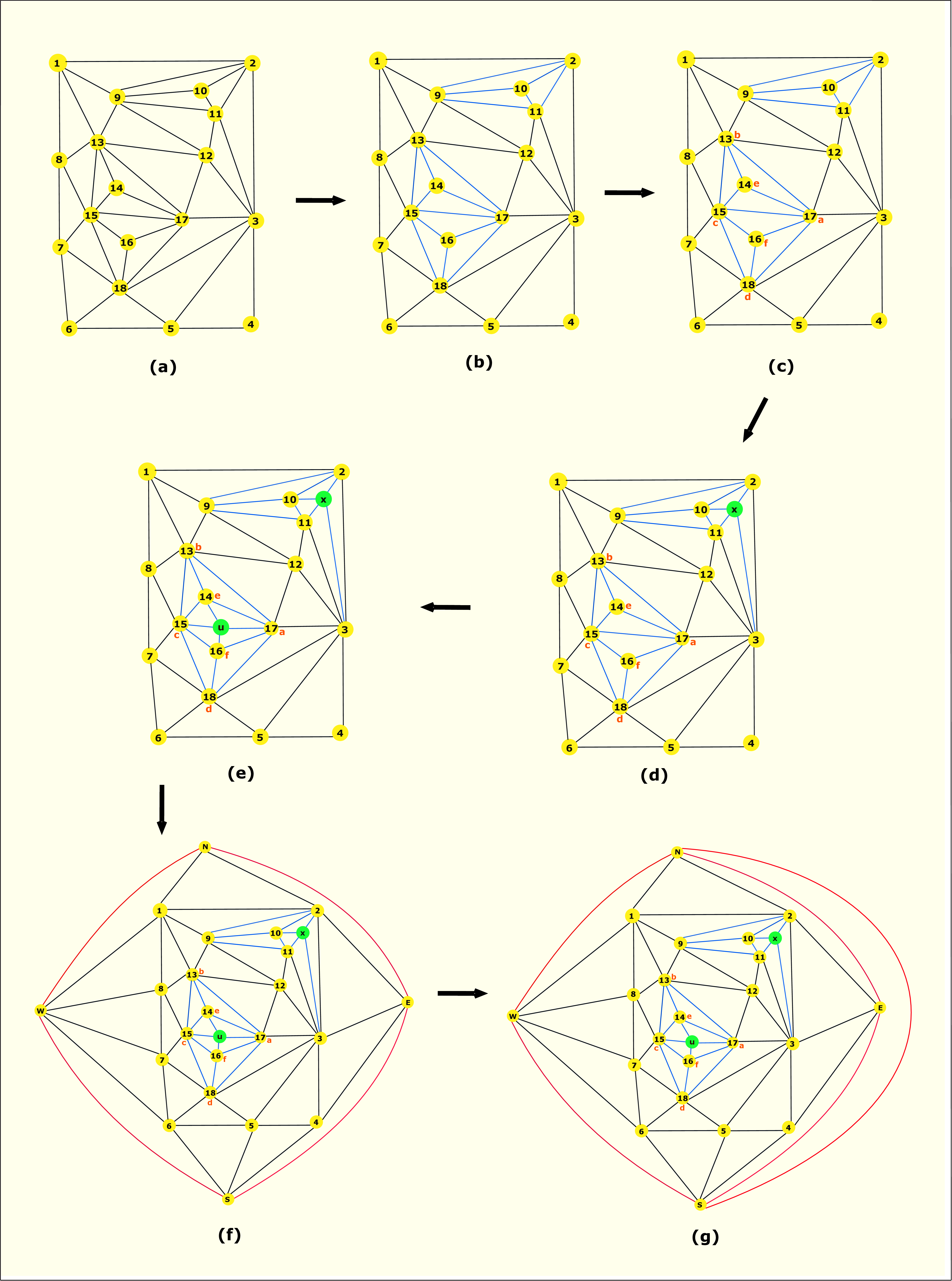}
    \caption{ (a) Input graph $G_T$. (b-c) Identifying the separating triangle (i.e., subgraph $K_T$ and others) and labeling. (d-e) Breaking the separating triangles by introducing new nodes and labeling the updated graph as $G^1_T$. (f-g) Applying Four-Completion and adding an extra edge in $G^1_L$ to construct $4$-connected triangulated graph. }
   \label{T-6}
 \end{figure}
  \begin{figure}
   \centering
\includegraphics[width=1.05\textwidth]{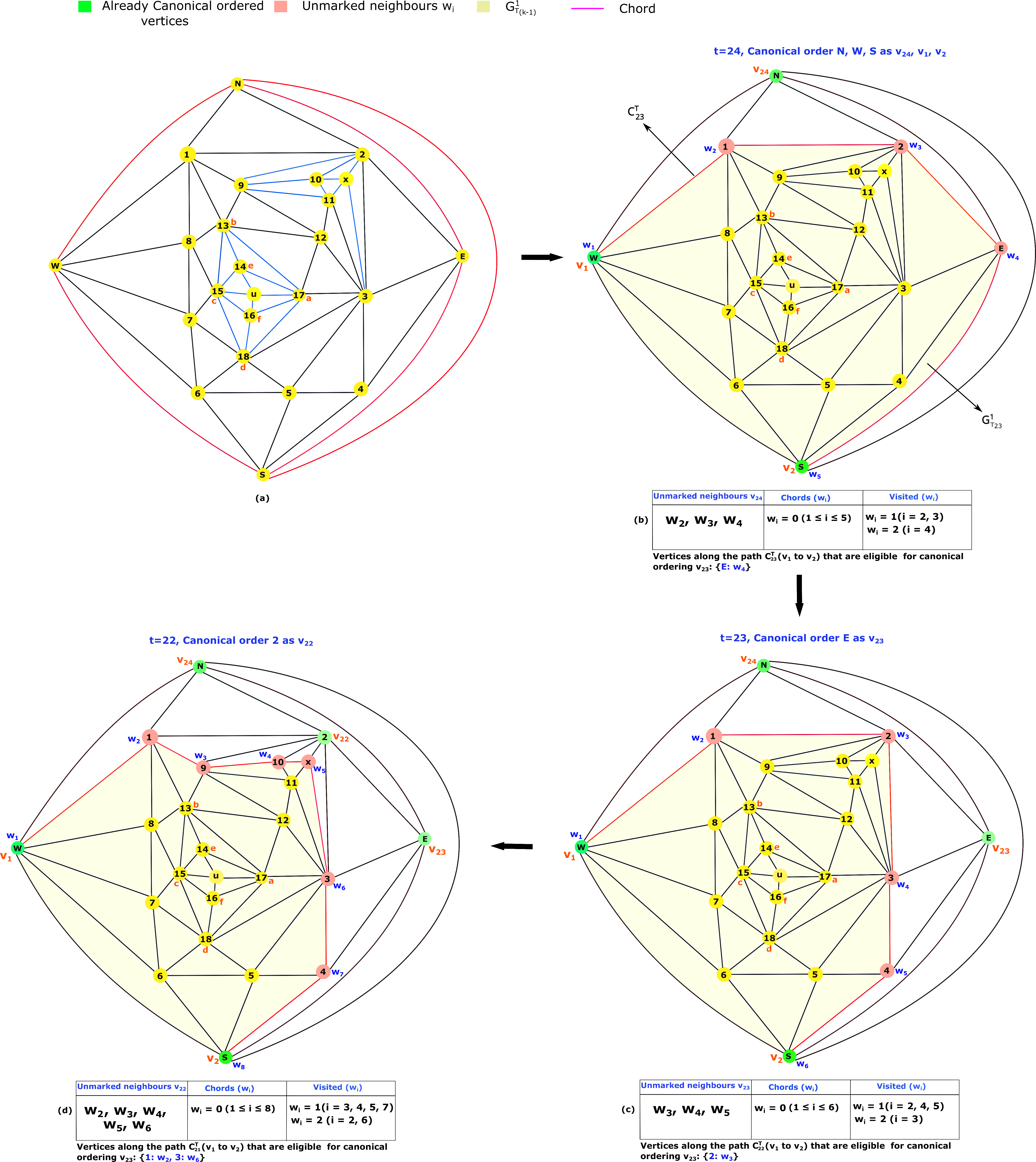}
    \caption{ (a-d) Generating the canonical order graph $G^2_T$ for the input graph $G^1_T$.}
   \label{T-7}
 \end{figure}
   \begin{figure}
   \centering
\includegraphics[width=1.05\textwidth]{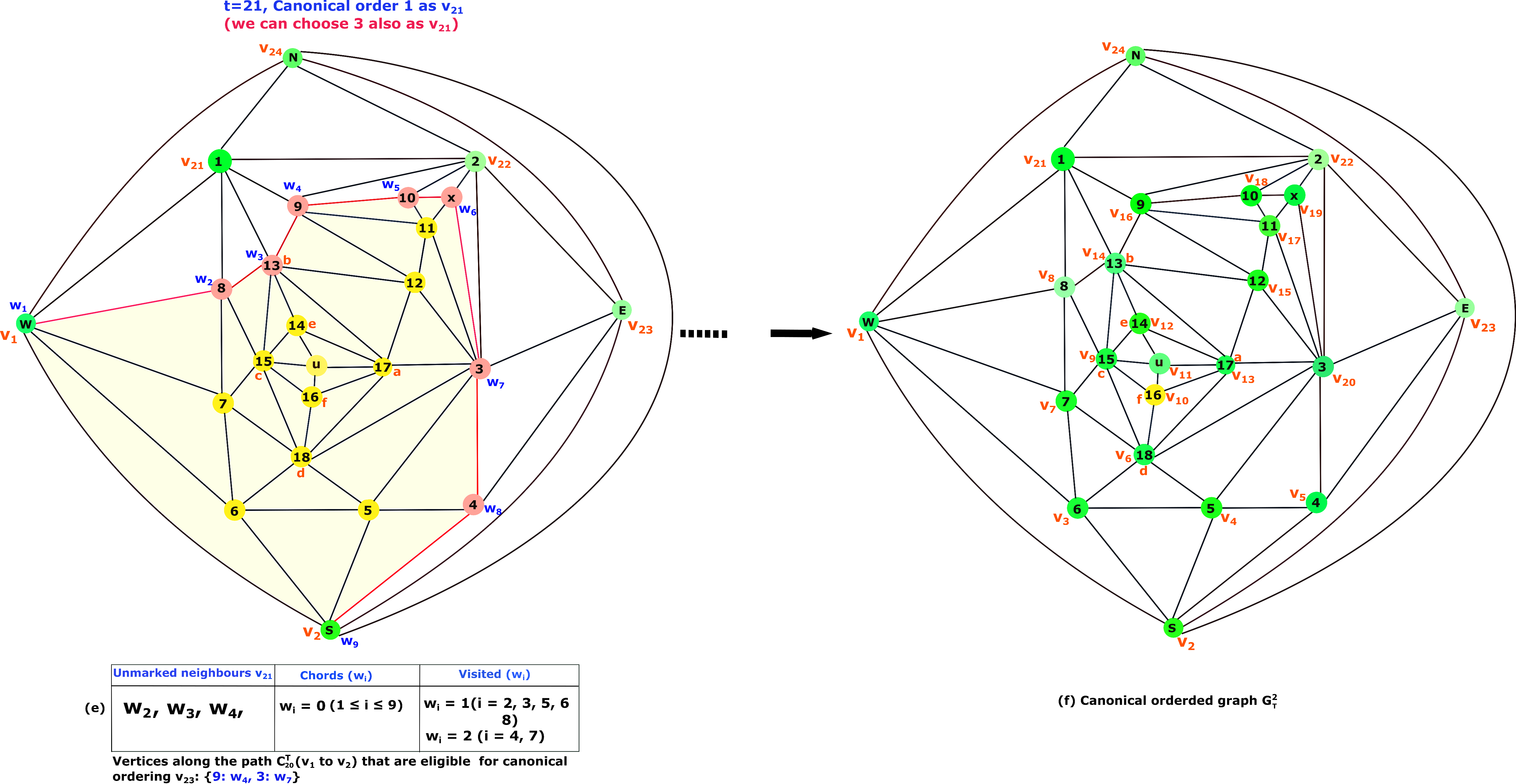}
    \caption{ (e-f) Generating the canonical order graph $G^2_T$ for the input graph $G^1_T$.}
   \label{T-8}
 \end{figure}
   \begin{figure}
   \centering
\includegraphics[width=1.05\textwidth]{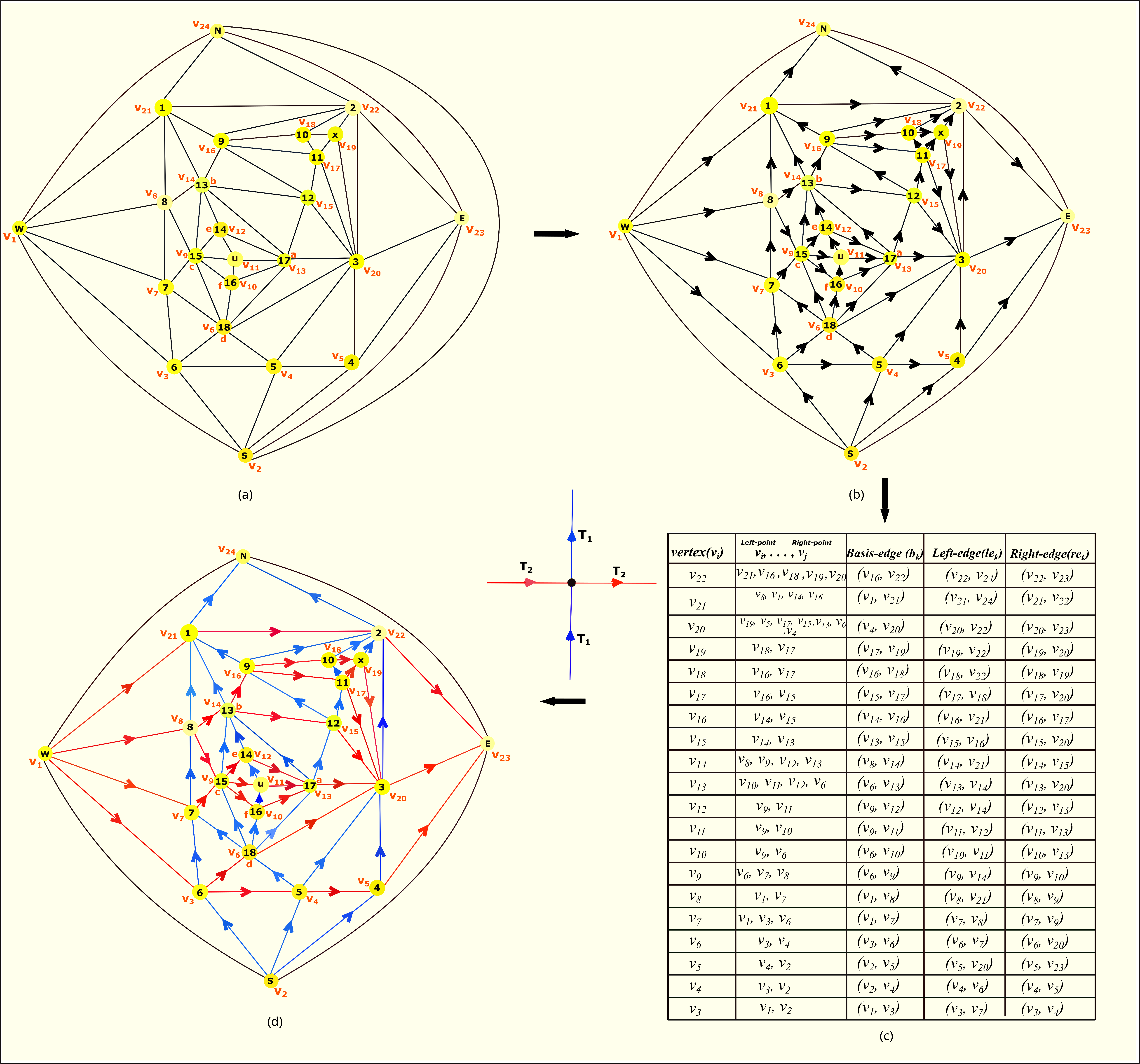}
    \caption{ (a) A possible canonical ordering represented as $G^2_T$. (b) A directed graph is constructed from this canonical ordering. (c) A listing of basis, left, and right edges associated with each vertex. (d) A regular edge labeling graph $G^3_T$ is derived from this canonical ordering.}
   \label{T-9}
 \end{figure}
 \begin{figure}
   \centering
\includegraphics[width=0.9\textwidth]{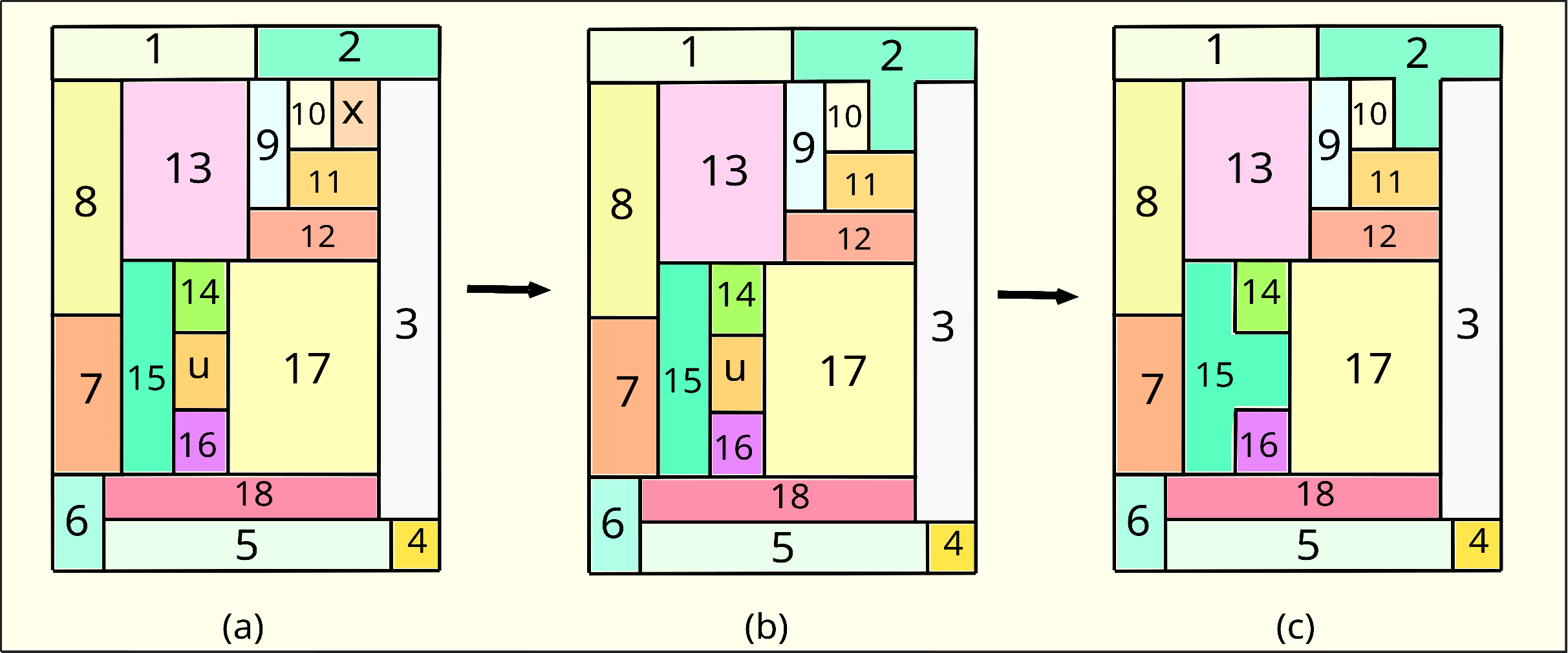}
    \caption{ (a) A floor plan $F'_T$ of $G^2_T$, constructed using the regular edge labeling graph $G^3_T$; (b–c) A final floor plan $F_T$ with a $T$-shaped module (module 15), obtained by merging modules (merging module $x$ with module $2$ and merging module $u$ with module 15) from the input graph $G_T$.}
   \label{T-10}
 \end{figure}
\subsection {Requirement of an $K_T$ within the Graph $G_T$} \label{5.5}
To design a floor plan ($F_T$) that incorporates a $T$-shaped module, it is essential to include a $K_T$ subgraph within $G^1_T$ (refer to Figure \ref{T-3}). Therefore, the presence of a $K_T$ subgraph in the input graph is necessary for successfully forming a $T$-shaped module within the floor plan. The outcome highlights the fundamental requirements for constructing a $T$-shaped module within a floor plan. Accordingly, the process begins by identifying a region $K_T$ in the input graph $G_T$, with its vertices ordered as $a$, $b$, $c$, $d$, $e$ and $f$ in a counter-clockwise order (see Figure \ref{T-5}).\\
To integrate a $T$-shaped module into the final floor plan, there are two types (see Figure \ref{T-1}). We are focusing on the generation of $T$-shape module of Type-B, whereas for this type, there are four further categories to generate $ T$-shaped modules in the final floor plan (see Figure \ref{T-2}). For each of the category (i.e., Category 1–4) of Type-B, it is possible to construct a $T$-shaped module within the floor plan. This study concentrates on building a $T$-shaped module within the floor plan, focusing on one type (i.e., Type-B)) out of two possible cases (see Figure \ref{T-1}). The presence of such a $T$-shaped module for any of the four categories (for Type-B) within a given graph is formally established in the correctness section. Therefore, our focus remains on forming a $T$-shaped module derived from the four defined categories, considering the subgraph $K_T$.
\subsection {$T$-shaped module generation within floor plan $F_T$}



 
This section illustrates our proposed Algorithm \ref{TLabel} using an example where we generate a $T$-shaped module within floor plan $F_T$ for the input graph $G_T$ with an internal subgraph isomorphic to $K_T$. \\\\
\textbf{1. Steps [1 to 4] of Algorithm \ref{TLabel}} : \textbf{Complex Triangle Identification and Removal (except subgraph $K_T$)} :\\
The method described by Roy et al. \cite{roy2001proof}  provides a way to identify and remove complex triangles from a graph. To eliminate all the complex triangles within a graph, one must first identify a subset of edges (denoted as $S_L$) such that every complex triangle contains at least one edge from this subset. Then each edge in $S_T$ is split by inserting new vertices into the graph $G_T$. This systematic modification ensures the removal of all the complex triangles while preserving the triangularity in the graph $G_T$.\\
Therefore, we apply the Complex Triangle identification and the Removal algorithm as described in \cite{roy2001proof} to first identify all complex triangles (see Figure \ref{T-6}(a-b)) and then remove all the complex triangles from the input triangulated graph $G_T$ (if their exist), leaving only $K_T$ subgraph (see Figure \ref{T-6}(c-d)). If any complex triangle other than subgraph $K_T$ exists, we split it by adding a new vertex $r$ and re-triangulate the input graph $G_T$. This produces an updated $G_T$ that contains no complex triangles except the subgraph $K_T$ (see Figure \ref{T-6}(a-d)). \\\\
\textbf{2. Steps [5 to 6] of Algorithm \ref{TLabel}} : \textbf{Removal of Complex Triangle $K_T$ and Four Completion Phase}:\\
To modify the remaining subgraph $K_T$, we proceed by choosing the edge $(a,c)$ and eliminating it by introducing a new vertex $u$. Subsequently, we insert new edges (i.e., $\{(u,e),(u,f),(u,a),(u,c)\}$) to maintain the triangulated structure of the graph. The resulting graph is denoted as $G^1_T$ (refer to Figure \ref{T-6}e). As a result of this transformation, $G^1_T$ no longer contains any complex triangles. \\
Once complex triangle elimination has been completed, the graph $G^1_T$ enters the four-completion process: Following the approach outlined in \cite{kant1997regular}, four paths {$P_4$, $P_3$, $P_2$, $P_1$} are first identified in $G^1_T$, after which directional vertices ($E$, $W$, $S$, $N$) are inserted into their respective paths. These inserted vertices correspond to the rectangular boundary modules that define the floor plan boundary. This procedure constitutes the four-completion phase, as illustrated in Figure \ref{T-6}(e-f). After completing the four-completion phase, the edge $(N, S)$ is introduced into $G^1_T$, resulting in a 4-connected triangulated graph ($G^1_T$), as shown in Figure \ref{T-6}(f-g). The graph $G^1_T$ then moves forward to the canonical ordering step.\\\\
 \textbf{3. Steps 7 to 17 (Algorithm \ref{TLabel}): Canonical ordering:}\\
 This section explains the algorithm (Algorithm \cite{kant1997regular}) for producing a canonical ordering ($G^2_T$), as defined in Definition 4 from a given 4-connected triangulated graph $G^1_T$. The procedure utilises Steps 7–17 from Algorithm \ref{TLabel}, illustrated with an example in Figures \ref{T-7}, \ref{T-8}. This canonical ordering is essential for constructing a $T$-shaped module within the floor plan $F_T$, based on the initial PTG $G_T$ that contains at least one interior subgraph $K_T$.\\
See Figures \ref{T-7}, \ref{T-8}: Beginning with the previously obtained 4-connected triangulated graph $G^1_T$, Steps 7–9 are used to set the $chord$ value to $0$ and the $status$ value to $False$ for each vertex of $G^1_T$. Vertex $W$ is assigned (canonical ordered) as $v_1$ and vertex $S$ is assigned as $v_2$, and the visited value for vertex $E$ is set to $1$. Following this, Steps 10–17 are applied to assign canonical orders to all the remaining vertices of $G^1_T$ one by one using the approach described in \cite{kant1997regular}, in accordance with Definition 4 (see Terminology Section \ref{Preliminaries}). The result is a canonically ordered graph $G^2_T$, which establishes a canonical vertex ordering. This ordering ($G^2_T$) is then used to construct the regular edge labeling $G^3_T$. \\\\
\textbf{4. Step 18 of Algorithm \ref{TLabel}:  Generation of Regular Edge Labeling:}\\
To construct a Regular Edge Labeling (REL), we follow the method described by Kant \cite{kant1997regular} using the canonical ordering $G^2_T$. Refer to Figure \ref{T-9}(a-d), which illustrates the process applied to $G^2_T$, where we determine the basis edges ($b_k$), along with the corresponding sets $C_k$ and $R_k$ for each vertex. These components are then used to produce the regular edge-labeled graph $G^3_T$ (see Figure \ref{T-9}d).\\\\
\textbf{5. Step 19 of Algorithm \ref{TLabel}: Generation of a Rectangular Floor Plan:}\\
Once we have generated the Regular Edge Labeling ($G^3_T$) using the canonical ordering concept, we use this REL to construct a rectangular floor plan ($F'_T$) for the generated graph $G^2_T$. This is done by following the method described by Bhasker and Sahni \cite{bhasker1988linear} (see Figure \ref{T-10}a).\\\\
\textbf{6. Steps [20 to 24] of Algorithm \ref{TLabel}: Generation of $T$-shaped Module within Orthogonal Floor plan by Merging Modules:}\\
The $Merge$ $Rooms$ function describes the method for integrating rectangular modules that correspond to extra vertices introduced during the removal of complex triangles in $G_t$. It requires three inputs: the rectangular floor plan $F'_T$ derived from the graph $G^2_T$, the canonical ordered graph $G^2_T$ itself and a set of extra nodes $Enodes_T$. The outcome is a $T$-shaped module in the floor plan that aligns with the structure of $K_T$.\\
See Figure \ref{T-10}(b,c) (here $S_T$ = $(2,11)$ and $Enodes_T$ is $x$), where we obtained the floor plan $F_T$ with a $T$-shaped module from a rectangular floor plan $F'_T$ while using the $function$ $Merge$ $Rooms$.\\\\
\textbf{Hence, given a $G_T$ (triangulated graph) with a internal subgraph $K_T$, a $T$-shaped module can be constructed within the corresponding floor plan $F_T$ using our proposed Algorithm \ref{TLabel}}. 
\section{Correctness of Algorithms}\label{correctness}
\begin{figure}
   \centering \includegraphics[width=0.95\textwidth]{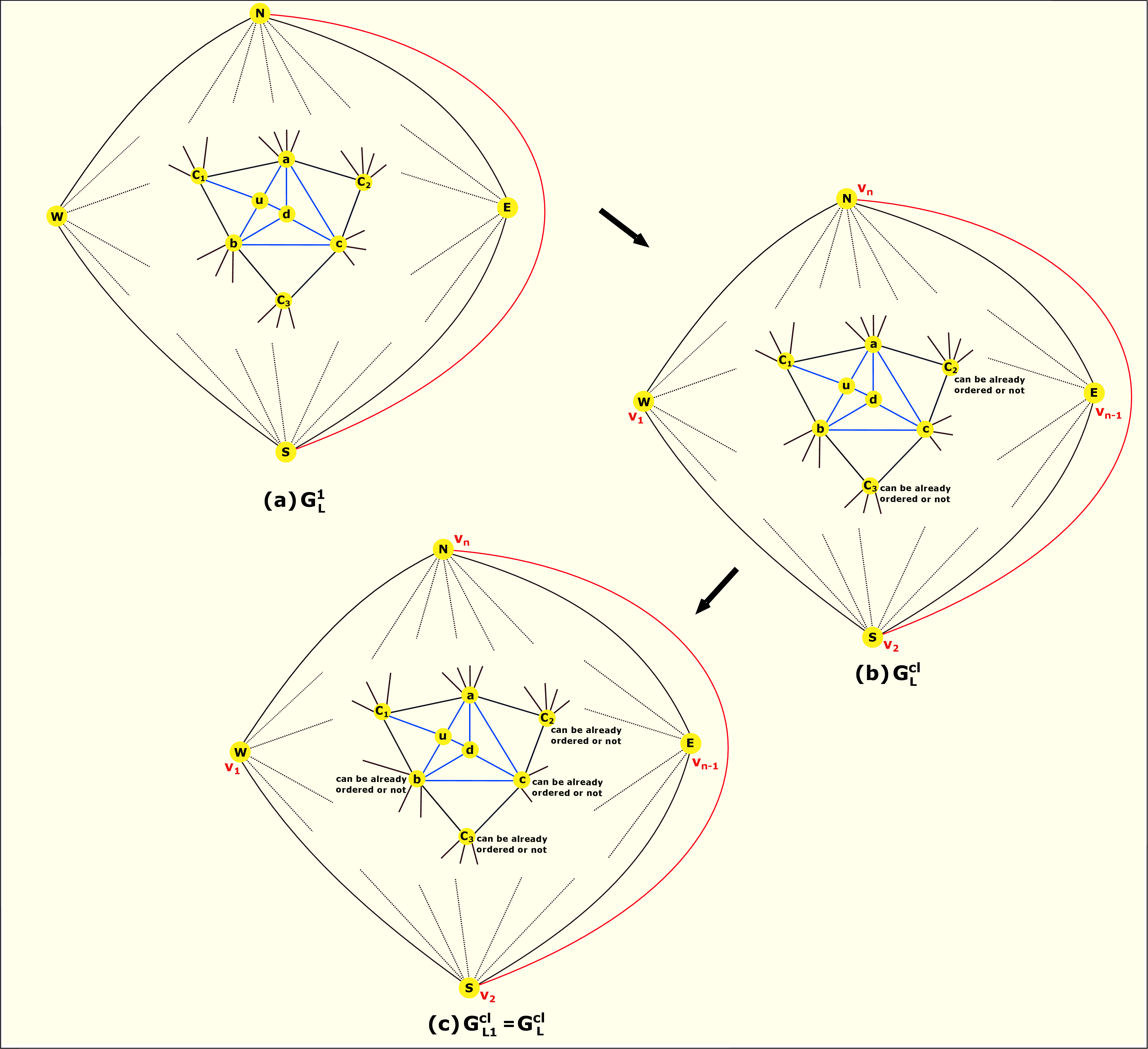}
     \caption{ (a) A 4-connected triangulated graph $G_L^1$. (b) $G^{cl}_L$ is generated from canonical ordering of all the vertices of $G^1_L$, except $\{a, b, c, d, u, C_1\}$, using steps (7--28) of Algorithm \ref{L-shaped}. (c) $G^{cl}_{L1}$ is generated by calling function $Types$ $of$ $Priority$ $label_L$ of Algorithm \ref{L-shaped}.}
   \label{CL-1}
  \end{figure}
  \begin{figure}
   \centering \includegraphics[width=0.95\textwidth]{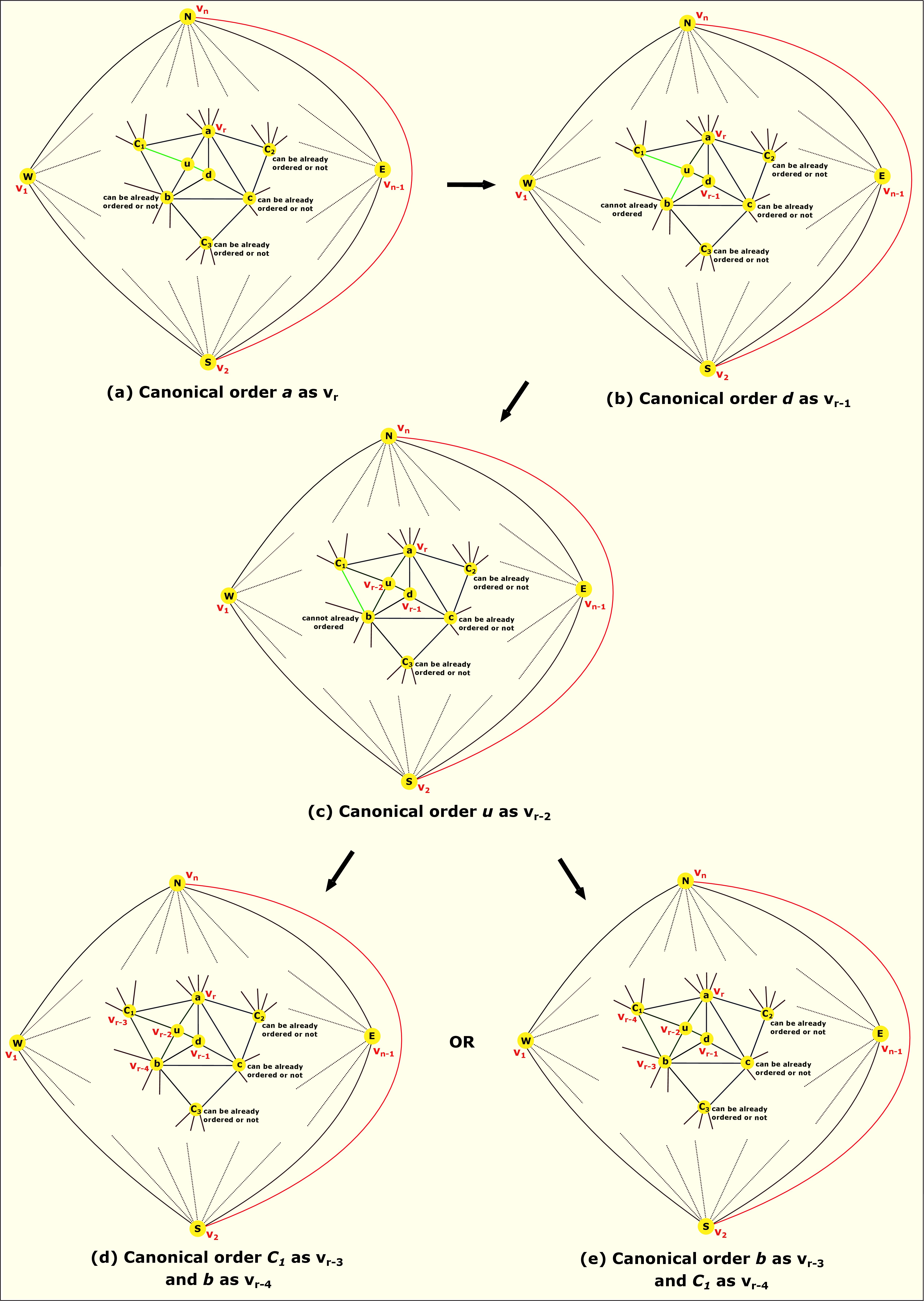}
     \caption{ (a-e) Existence of a Category-D (Canonical ordering) in $G^1_L$.}
   \label{CL-2}
  \end{figure}
   \begin{figure}
   \centering \includegraphics[width=0.95\textwidth]{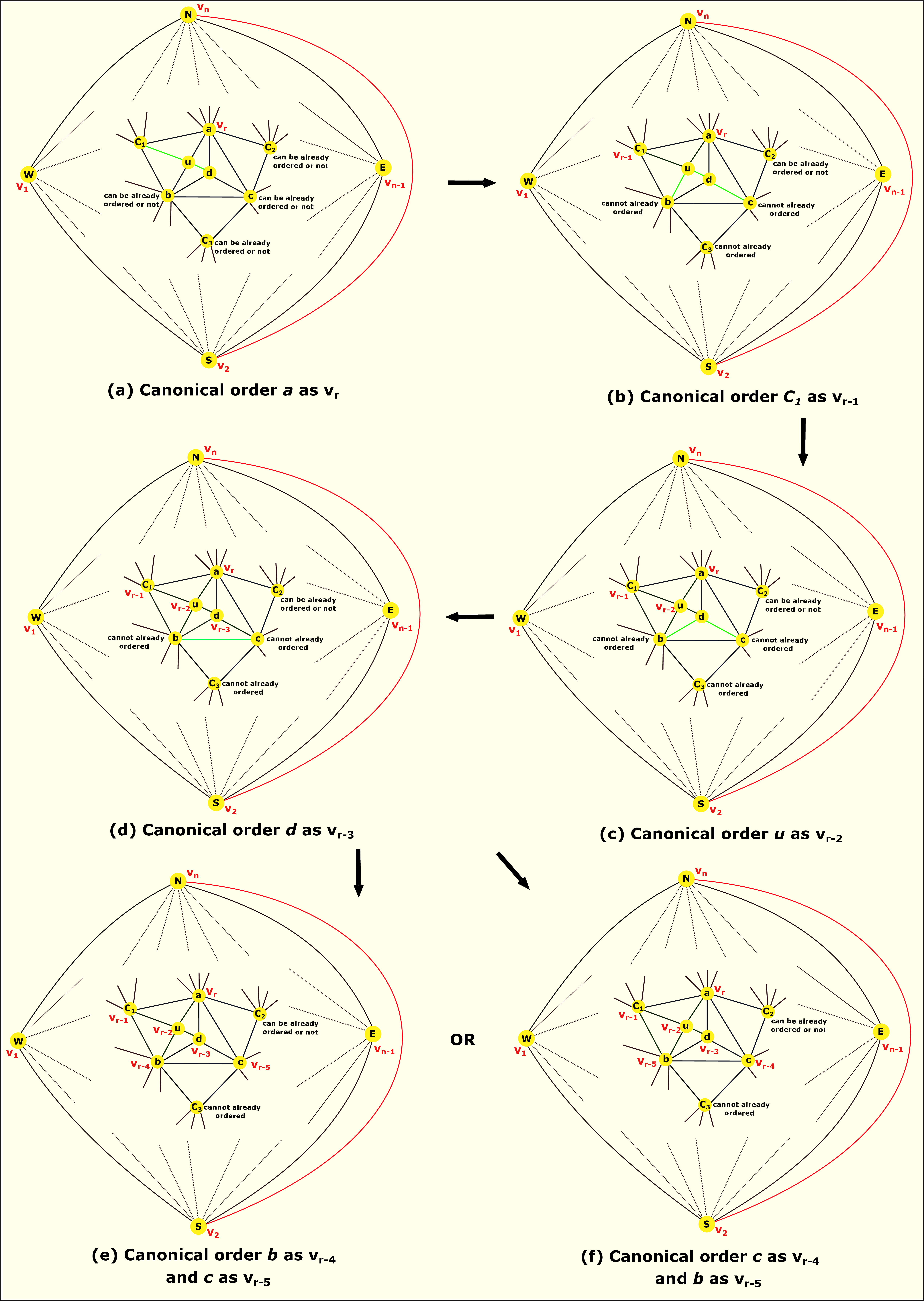}
     \caption{ (a-f) Existence of a Category-F (Canonical ordering) in $G^1_L$.}
   \label{CL-3}
  \end{figure}
   \begin{figure}
   \centering  \includegraphics[width=0.95\textwidth]{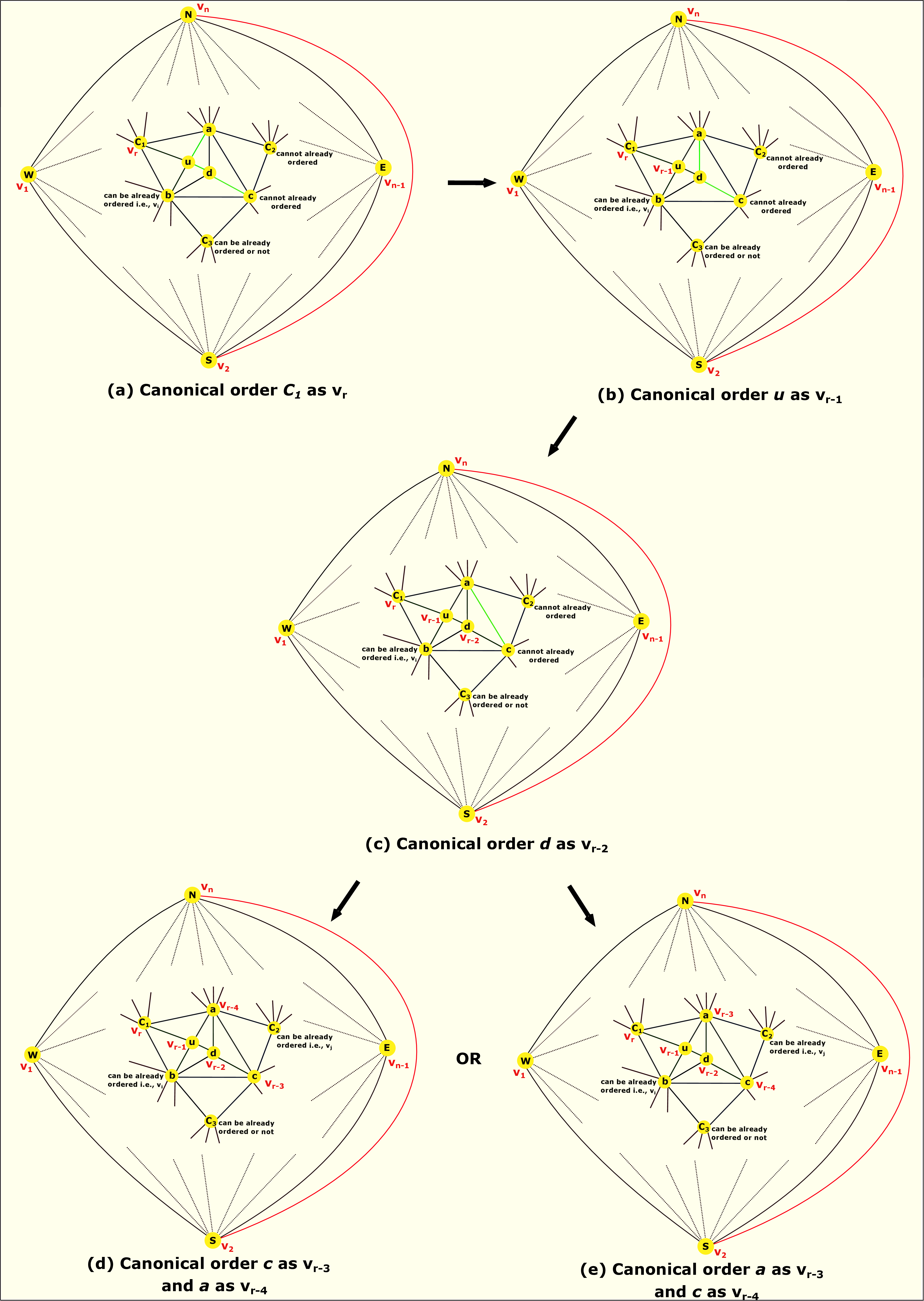}
     \caption{ (a-e) Existence of a Category-A (Canonical ordering) in $G^1_L$.}
   \label{CL-4}
  \end{figure}
     \begin{figure}
   \centering  \includegraphics[width=0.95\textwidth]{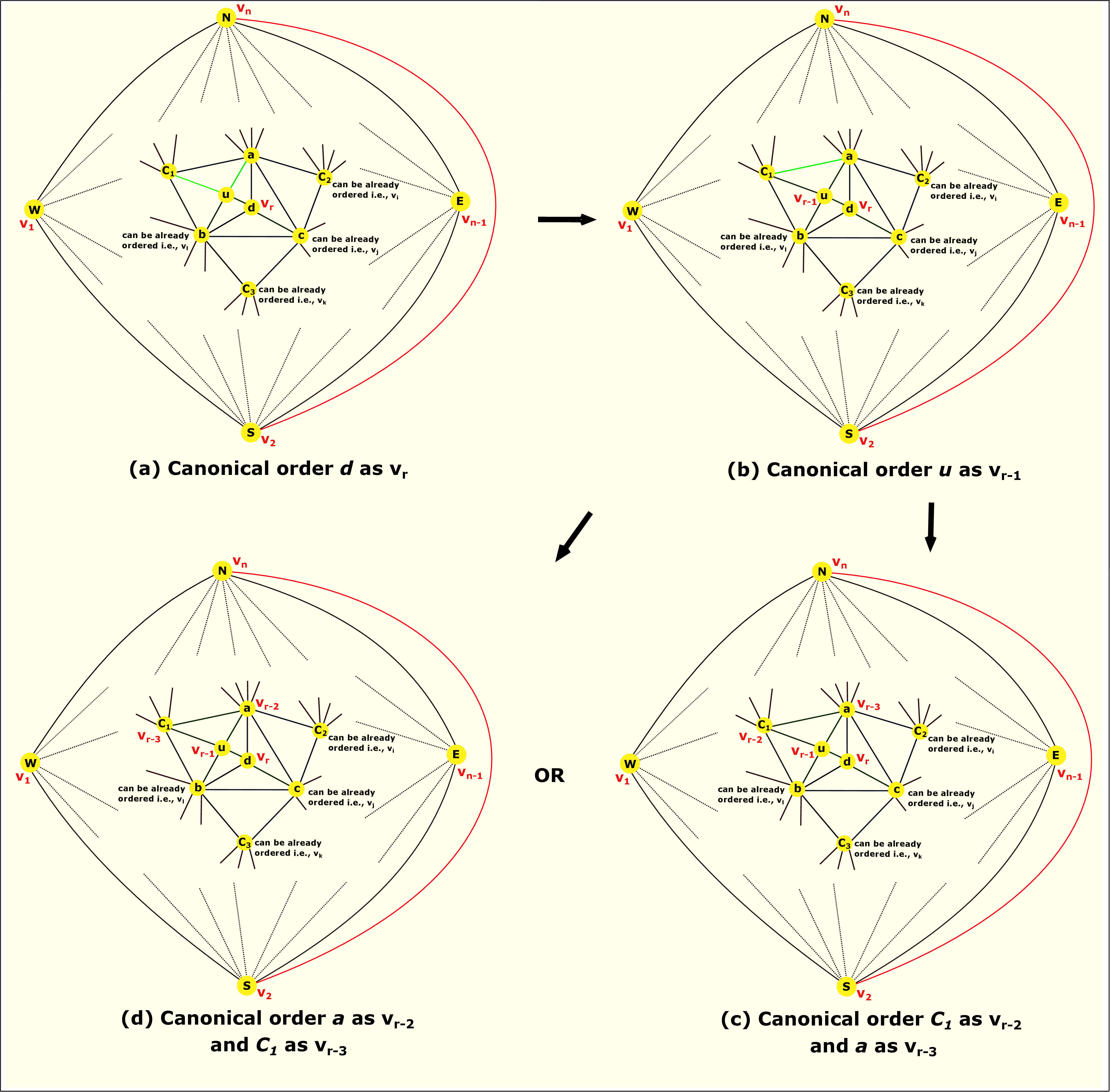}
     \caption{(a-d) Existence of a Category-B or Category-C (Canonical ordering) in $G^1_L$.}
   \label{CL-5}
  \end{figure}

\begin{theorem}
    For a given $G_L(V, E)$  that includes at least one internal subgraph isomorphic to $K_L$, Algorithm \ref{L-shaped} yields an orthogonal floor plan $F_L$ that necessarily contains a L‑shaped module corresponding to the subgraph $K_L$. 
\end{theorem}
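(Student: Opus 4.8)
The plan is to establish correctness by tracking the structural invariants preserved through each of the six phases of Algorithm \ref{L-shaped} and then verifying that the final merge step necessarily produces an $L$-shaped module. The argument proceeds constructively, mirroring the algorithm's own phase structure, and reduces the claim to one key feasibility statement about the prioritized canonical ordering.

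First I would verify the preprocessing phase (Steps 1--6). By the complex-triangle removal procedure of Roy et al. \cite{roy2001proof}, after Steps 1--4 the graph $G_L$ retains its triangulation while every complex triangle except $K_L$ is eliminated. Subdividing the edge $(a,b)$ with the new vertex $u$ and inserting $\{(u,C_1),(u,d),(u,a),(u,b)\}$ (Step 5) then destroys the last complex triangle while preserving planarity and triangularity; one checks that $u$ becomes an interior vertex of degree $4$, which is exactly the condition identified in Section \ref{5.2} for $u$ to admit a valid merge. The Four-Completion step together with the edge $(N,S)$ renders $G^1_L$ a $4$-connected PTG, precisely the input class for which canonical ordering is defined in \cite{kant1997regular}.

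The core structural claim is that the prioritized ordering routine (Steps 7--53) always terminates with $r^{*}=3$ under at least one of Categories A--F. Here I would first invoke the generic canonical-ordering existence theorem of \cite{kant1997regular} to guarantee that \emph{some} valid ordering of $G^1_L$ exists; the point is then that restricting to the six priority patterns on $\{a,d,u,C_1\}$ does not destroy feasibility. Since Steps 8--28 order every vertex outside $\{a,b,c,d,u,C_1\}$ first, and since $b$ and $c$ require no fixed position, the remaining freedom concerns only these four priority vertices, whose admissible completions are exhausted by the six patterns A--F determined by the local adjacency of the modified $K_L$. This is the step I expect to be the main obstacle: it demands a careful case analysis (illustrated in Figures \ref{CL-1}--\ref{CL-5}) showing that the local configuration around $K_L$ always admits a canonical completion under at least one priority pattern, and that the iterative $ch/vi/St$ bookkeeping correctly detects the successful category while rejecting the infeasible ones.

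Finally I would connect the combinatorics to the geometry. Applying Algorithm \ref{RELF} to $G^2_L$ yields a valid REL $G^3_L$ \cite{kant1997regular}, and the parameter $m$ is computed so that the module to be merged with $u$ (module $a$ when $m=1$, module $b$ when $m=2$) shares a $T_1$-wall with one of $\{u,C_1\}$ and a $T_2$-wall with the other, as enforced by Cases 1--3 of Step 54. This opposite-wall condition, together with the fixed wall between module $a$ and module $C_1$ discussed in Figure \ref{L-10}, is exactly what forces the merge $M(u,a,F'_L)$ (or $M(u,b,F'_L)$) in Steps 56--63 to produce an $L$-shaped module rather than a degenerate or $T$-shaped one. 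Constructing the rectangular floor plan $F'_L$ via Bhasker--Sahni \cite{bhasker1988linear} and performing these merges then yields the orthogonal floor plan $F_L$ with the asserted $L$-shaped module corresponding to $K_L$, completing the proof.
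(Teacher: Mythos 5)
Your proposal is correct and follows essentially the same route as the paper's own proof: the same phase-by-phase verification of Steps 1--6, the same reduction of the whole theorem to the feasibility of the prioritized ordering on $\{a,d,u,C_1\}$ (which the paper isolates as Lemma \ref{lemma-1} and establishes by exactly the case analysis you anticipate, casing on which of $a$, $C_1$, $d$ can be ordered first since $u$ cannot be, with Figures \ref{CL-1}--\ref{CL-5} carrying the cases), and the same endgame via Algorithm \ref{RELF}'s Cases 1--3, the parameter $m$, the edge flips, and the final merge. Beyond deferring that case analysis---which you correctly flag as the crux---there is no substantive divergence from the paper's argument.
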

\begin{proof}
Consider a triangulated graph $G_L$ that contains an internal complex triangle isomorphic to $K_L$. For every four-connected triangulated graph, a canonical ordering can be determined, as demonstrated by Kant \cite{kant1997regular}. From this ordering, a regular edge labeling (REL) can subsequently be obtained. This REL serves as the basis for creating a rectangular floor plan, following the method described by Kant \cite{kant1997regular}.\\
Our objective is to construct a $L$-shaped module within the floor plan corresponding to an input graph $G_L$. To accomplish this, we need an RFP that includes an additional module, called $u$. Creating a module of the $L$ shape requires merging module $u$ with module $a$ or module $b$, following a specified priority order and REL. To this end, we begin with a graph $G_L$ that contains an interior $K_L$ subgraph. We then modify $G_L$ by introducing a new vertex $u$, thereby subdividing the outer edges of $K_L$. This modification results in a 4-connected plane triangulated graph, denoted as $G^1_L$, which is a critical prerequisite for establishing a canonical ordering. The transformation process further involves the elimination of any remaining complex triangles, the application of 4-completion, and the addition of an auxiliary edge $(N, S)$, all of which are necessary to satisfy the requirements for canonical ordering.
Next, when assigning a prioritize canonical ordering to the vertices of $G^1_L$, we must prioritize the ordering of vertices \{$a,d,u, C_1$\} based on a defined specific order. However, the remaining vertices of $G^1_L$ (except for the \{$a,d,u, C_1$\} vertices) do not follow any specific priority order. The priority-based ordering of vertices \{$a,d,u, C_1$\} can fall into one of six defined categories (Category A to F): Refer to Figures \ref{L-5}, \ref{L-6} and \ref{L-7} and for illustration see subsection \ref{Category}.\\
We aim to show that a $L$-shaped module, representing the interior part $K_L$ of $G_L$, can always be constructed inside the floor plan $F_L$ of the graph $G_L$ by following our proposed Algorithm \ref{L-shaped}. To prove this, we will go through each step of Algorithm \ref{L-shaped} one by one, explaining how it works and confirming that it successfully builds the desired module.

\begin{enumerate}
    \item \textbf{Steps 1 to 4 (Algorithm \ref{L-shaped}):}  Given an input graph $G_L$ that contains at least one $K_L$ as a subgraph, we first check the existence of any complex triangles other than $K_L$ using the proposed algorithm in \cite{roy2001proof} (see Figure \ref{L-9}). If such complex triangles exist in $G_L$ we apply the Complex Triangle Removal algorithm proposed in  \cite{roy2001proof}, which introduces additional vertices and edges to eliminate them (see Figure \ref{L-11}c). This process ensures that all complex triangles (except $K_L$) are removed in $G_L$. As a result, we obtain a modified graph $G_L$ that is free of complex triangles other than $K_L$.\\
    \item \textbf{Steps 5 to 6 (Algorithm \ref{L-shaped}):} In order to modify the remaining $K_L$ subgraph, we first replace the internal edge $(a, b)$ of $K_L$ by inserting a new vertex $u$. To ensure that the updated graph remains triangulated, additional edges are introduced, as depicted in Figure \ref{L-11}d. We denote this modified graph as $G^1_L$. Next, we apply the Four-Completion algorithm, as described in \cite{kant1997regular}, to $G^1_L$ (see Figure \ref{L-12}b). At the final stage, we add an edge between vertex $N$ and $S$  to form $G^1_L$ (4-connected graph). Thus, a 4-connected graph $G^1_L$ can always be constructed from an input graph $G^1_L$ (see Figure \ref{L-12}c). \\
     \item \textbf{Steps 7 to 53 (Algorithm \ref{L-shaped}):} Starting from the 4-connected triangulated graph $G^1_L$ obtained earlier, we proceed with \textbf{Steps (7–28)} to assign canonical order to all possible vertices in $G^1_L$, excluding the set \{$a, C_1, d, u$\}, while following the process explain in Definition 4 (refer to the Terminology Section \ref{Preliminaries}). When we reach a stage where the next possible vertex for the order is only from the set \{$a, C_1, d, u$\}, we return $G^2_L$ and move to \textbf{Steps (29–53)}. At this point, we try systematically to order the vertices in the set \{$a, C_1, d, u$\}  according to Categories (A–F), checking each Category in order. If valid ordering exists under any of these Categories, we apply it and move forward by generating the canonical ordered graph $G^2_L$, which will be used for regular edge labeling construction: see Figures \ref{L1} to \ref{L7}.\\
     Therefore, to confirm the existence of a canonical ordered graph $G^2_L$, it suffices to demonstrate that there is always a valid, Category-based ordering for the set \{$a, C_1, d, u$\} within the generated graph $G_L^\text{cl}$. 
     \textbf{According to Lemma \ref{lemma-1}, there always exists a canonical priority ordering of the set \{$a, C_1, d, u$\}, which corresponds to any of the six possible Categories: Category A-F}.\\
     
    \item   \textbf{Step 54 (Algorithm \ref{L-shaped}):} Using the canonical ordered graph $G^2_L$ generated earlier, we now proceed with Algorithm \ref{RELF} to construct a Regular Edge labeling (REL, i.e., $T_1$ or $T_2$) of $G^2_L$, i.e., $G^3_L$. To construct a $L$-shape module within floor plan $F_L$, we have to merge the extra module $u$ with either $a$ or $b$ so that either module $a$ becomes $L$-shaped or module $b$ becomes $L$-shaped and to form module $a$ as $L$-shaped: the direction (that is, $T_1$ or $T_2$) of vertex $a$ with $C_1$ and $u$ must be opposite. Likewise, to form module $b$ as the $L$-shape: the direction (that is, $T_1$ or $T_2$) of the vertex $b$ with $C_1$ and $u$ must be opposite (see Figures \ref{L-5}, \ref{L-6}: the direction (that is, $T_1$ or $T_2$) of vertex $a$ with $C_1$ and $u$ is opposite and see Figure \ref{L-7}: the direction of vertex $b$ with $C_1$ and $u$ is opposite).\\
    \begin{enumerate}
   \item[(a.)] \textbf{Steps (1–10) of Algorithm \ref{RELF}}: We proceed to generate a regular edge labeling for the input graph $G^2_L$ utilizing the methodology described in\cite{kant1997regular}.
   Based on the resulting edge types $T_1$ or $T_2$ for the edges connecting vertex $C_1$ to vertices $a$ and $b$, we then determine the next appropriate operation.\\
    \item [(b.)] \textbf{{Steps (11–23) of Algorithm \ref{RELF}}}: Given that the neighbors of vertex $u$ are {$a, b, d, C_1$} and those of vertex $d$ are {$a, b, c, u$}, it follows that the edges incident to $u$ and $d$ possess fixed orientations ($T_1$, $T_2$) (refer to the definition of REL in Section \ref{Preliminaries}) during the construction of the REL from the canonically ordered graph $G^2_L$. Now, there can be two possible cases that arise based on the above generated REL:\\
    \textbf{Case- A:} If module $a$ shares walls of differing types (either $T_1$ or $T_2$) with both $u$ and $C_1$, or if the same condition holds for module $b$, then it is possible to directly merge module $a$ with $u$ by returning $m = 1$, or module $b$ with $u$ by returning $m = 2$. This results in the formation of a $L$-shaped module (see Figures \ref{RELL1}d, \ref{REL-L2}).\\
    \textbf{Case-B:} See Figure \ref{CL-6}(a-h): If the condition outlined in Case A does not hold, it necessitates that the input graph must contain two distinct vertices, denoted by $x$ and $y$. These vertices are defined as $x \in \left( \text{nbd}(a) \cap \text{nbd}(C_1) \right) \setminus {u}$ and $y \in \left( \text{nbd}(b) \cap \text{nbd}(C_1) \right) \setminus {u}$, with the condition that $x \ne y$; otherwise, this would contradict the requirement that the subgraph $K_L$ must exist in $G_L$ (see Figure \ref{CL-6}(a–b)). This scenario leads to two distinct subcases:\\
    \textbf{Subcase A:} If the walls shared between module $a$ and $C_1$ (i.e., $T_1$ or $T_2$) are opposite to those shared between module $x$ and $C_1$, then the appropriate sequence of operations is to first perform a flip on edge $(C_1, x)$, followed by a flip on edge $(a, C_1)$. After these modifications, we return $m = 1$ (refer to Figure \ref{CL-6}(e–h)).\\
    \textbf{Subcase B:} If the walls shared by both $a$ and $x$ with $C_1$ are aligned (i.e., the same), then a single flip on edge $(a, C_1)$ suffices, after which we return $m = 1$ (refer to Figure \ref{CL-6}(f–h)).\\

    \end{enumerate}
 Since, based on defined priority ordering (Category: A-F), the direction of the vertex $a$ with respect to $C_1$ and $u$ must always be opposite or this must hold for the vertex $b$. Hence, we can choose to merge module $u$ with either $a$ or $b$ to form a $L$-shape by returning the corresponding $m$-value.
 \textbf{Therefore, a REL $G^3_L$ can always be generated from the canonical ordered graph $G^2_L$.}\\
  \item \textbf{Steps 55 to 63 (Algorithm \ref{L-shaped}):} Based on the previously generated REL $G^3_L$ of $G^2_L$, we will now construct a rectangular floor plan (RFP) $F'_L$ using the $T_1$ and $T_2$ direction, following the algorithm outlined in \cite{kant1997regular}. Once the RFP $F'_L$ is obtained, we will apply the function $Merge$ $Rooms$. This function takes the following inputs: the canonical ordered graph $G^2_L$, the generated RFP $F'_L$, the parameter $m$, and the set of extra nodes ($Enodes_L$: which are additional vertices introduced in $G_L$ to eliminate complex triangles, excluding $K_L$): Figure \ref{REL-L2}.\\
  \begin{enumerate}
      \item [(a.)] \textbf{Steps 56 to 58 (Algorithm \ref{L-shaped}):} Now, we will perform the merging of additional modules in $F'_L$ using the set $Enodes_L$. Specifically, for each vertex in $Enodes_L$, the corresponding module in $F'_L$ (i.e., the module associated with $u_i$) is merged with either module $a_i$ or $b_i$ within $F'_L$.\\
      \item [(b.)] \textbf{Steps 59 to 63 (Algorithm \ref{L-shaped}):}  For the modified  $K_L$ vertices, if $m$ $=$ $1$, the extra module $u$ will merge with the module $a$ in $F'_L$; otherwise, it will merge with the module $b$ in $F'_L$. Ultimately, this process results in an orthogonal floor plan $F_L$ containing a $L$-shaped module corresponding to the subgraph $K_L$.\\
  \end{enumerate}
  \end{enumerate}
\textbf{Thus, for a given plane triangulated graph $G_L(V,E)$ that contains at least one interior $K_L$, the Algorithm \ref{L-shaped} always produces a module of the $L$-shape (corresponding to the interior $K_L$) within an orthogonal floor plan $F_L$.}.
\end{proof}
\qed
  \begin{figure}[H]
   \centering  \includegraphics[width=0.95\textwidth]{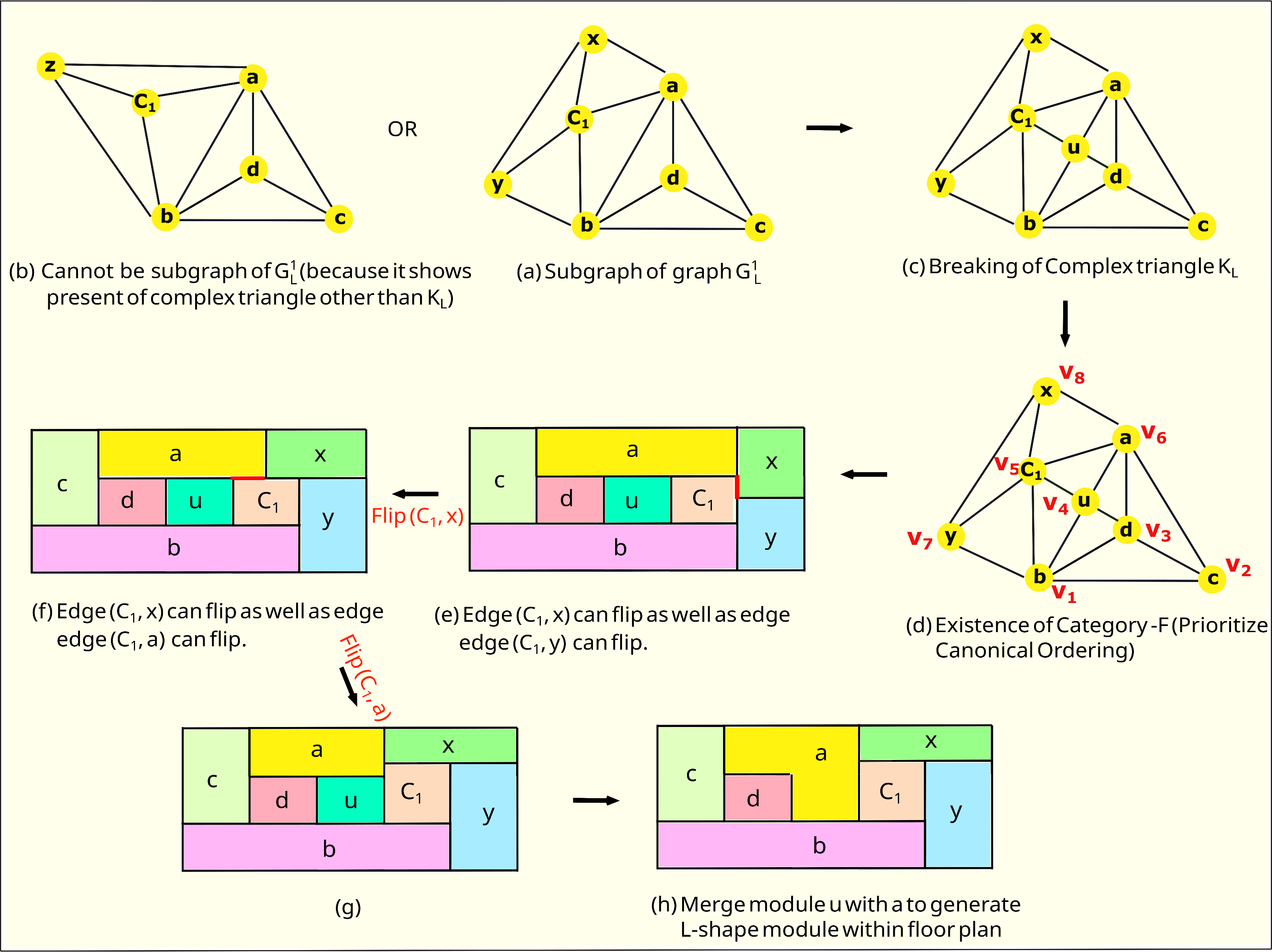}
     \caption{(a-h) Existence of a $L$-shaped module within floor plan by flipping an edge.}
   \label{CL-6}
  \end{figure} 
\begin{lemma}
\label{lemma-1}
 For a graph $G^1_L$, which is generated from an input graph $G_L$ with interior subgraph $K_L$, there always exists a canonical priority ordering of the set \{$a, C_1, d, u$\}, which corresponds to any of the six possible Categories: Category A-F.
 \end{lemma}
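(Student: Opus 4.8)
The plan is to establish the lemma by combining the guaranteed existence of a canonical ordering for $4$-connected triangulated graphs (Kant \cite{kant1997regular}) with a structural analysis of the modified $K_L$ subgraph (the gadget on $\{a,b,c,d,u,C_1\}$), and then to verify that the greedy priority loop of Steps $29$--$53$ completes for whichever of the six orderings the structure admits. First I would record the local adjacencies fixed by Steps $5$--$6$: after subdividing $(a,b)$ by $u$ and re-triangulating, the interior vertex $d$ has exactly the neighbours $\{a,b,c,u\}$ and the inserted vertex $u$ has exactly the neighbours $\{a,b,d,C_1\}$, so both are degree-$4$ vertices whose entire neighbourhood lies inside the gadget, whereas $a,b,c,C_1$ are the only gadget vertices with edges to the already-ordered part of $G^1_L$. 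In particular $u$ is adjacent to all three of $a$, $d$ and $C_1$, a fact that gives the freedom needed to slot $u$ into the ordering.

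The second step is a reduction that makes the six categories transparent. Deleting $u$ from each of Categories A--F leaves, respectively, the six distinct permutations of $(a,d,C_1)$, with $u$ reinserted at a prescribed position: directly after the first listed vertex in A, B, C, and directly after the second in D, E, F. Realizing a category therefore amounts to realizing a chosen relative order of $a,d,C_1$ together with $u$ in its prescribed slot. I would then case-split on the relative order that the canonical process induces on $\{a,d,C_1\}$, which is dictated by how the contiguous arc of $\{a,b,c,C_1\}$ sits on the current outer cycle $C^L$ at the moment Steps $7$--$28$ stall (Figure \ref{L3}). Because $u$ neighbours every one of $a,d,C_1$, it becomes eligible ($vi(u)\ge 2$) as soon as any two of them are ordered; this makes the position-three slots of Categories D, E, F immediately available whenever the induced order is $(a,d,C_1)$, $(C_1,a,d)$ or $(a,C_1,d)$, while for the three remaining orders the corresponding Category A, B or C places $u$ one step earlier, using $b$ (ordered opportunistically) as $u$'s second ordered neighbour.

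It then remains to argue that the forced priority order actually completes to a full canonical ordering rather than stalling. Here I would use a monotonicity property of the elimination rule: ordering, and hence removing, a vertex only increases the visit counts $vi(\cdot)$ of the remaining vertices and can only destroy, never create, the chords counted by $ch(\cdot)$. Consequently, if a relative order of $\{a,d,u,C_1\}$ is consistent with even one canonical ordering of $G^1_L$, then the greedy rule of \textit{CanonLabel}, which orders the next admissible priority vertex and otherwise any admissible non-priority vertex, cannot get stuck before that order is realized, so the call returns with $r^{*}=3$. Since Kant's theorem guarantees that at least one canonical ordering exists, and the case analysis shows that its induced order on $(a,d,C_1)$ is matched by one of A--F with a realizable $u$-slot, at least one of the six calls succeeds, which is exactly the assertion of the lemma.

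The step I expect to be the main obstacle is the exhaustiveness and realizability claim inside the case analysis, specifically the position-two slots of Categories A, B and C. For these I must certify that $b$ can always be ordered before $u$ and that the chords at the interface vertices $a$ and $C_1$ can be cleared in the order the category demands, for every way the gadget attaches to the remainder of $G^1_L$ and every pattern of externally blocked vertices (such as the stuck vertices in Figure \ref{L3}); the delicate point is ruling out any attachment configuration that would simultaneously defeat all six categories by leaving $d$ or $u$ permanently blocked.
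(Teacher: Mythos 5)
Your reduction of the six categories to the six permutations of $(a,d,C_1)$ with $u$ inserted in a prescribed slot (position two for Categories A--C, position three for D--F) is correct and matches the paper's classification, and your case split on which of $a$, $d$, $C_1$ the process orders first is the same skeleton the paper uses. However, the two load-bearing steps of your argument fail or are missing. First, your monotonicity claim is false: $ch(v)$ counts chords of $v$ on the \emph{evolving} outer cycle of the shrinking graph, and ordering (removing) a vertex can \emph{create} chords, because an interior edge becomes a chord as soon as both of its endpoints are exposed on the new boundary; this is visible in the paper's own worked run, where vertices such as $1$, $6$, $13$ become blocked partway through (Figure \ref{L3}). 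Consequently your exchange principle --- ``if the target relative order is consistent with even one canonical ordering, the priority greedy cannot stall before realizing it'' --- is unsupported. Kant's theorem guarantees only that \emph{some} eligible vertex exists at every step for a 4-connected triangulation, not that a prescribed relative order of four designated vertices is realized by the greedy rule; and an arbitrary canonical ordering of $G^1_L$ may well induce a permutation of $(a,d,C_1)$ with $u$ in the \emph{wrong} slot (e.g.\ $C_1,d,u,a$, which matches no category), so consistency of the paired slot is exactly what must be proved.

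That realizability of the $u$-slot, which you explicitly leave open, is the entire content of the lemma, and the paper closes it by a direct planar-embedding argument rather than any exchange property. It first rules out $u$ as the vertex ordered at the stall point (its neighbours are $a,b,d,C_1$, of which at most $b$ can already be ordered, so $vi(u)<2$), then case-splits on which of $a$, $C_1$, $d$ is eligible first and reads off forced subpaths of the outer cycle: ordering $a$ as $v_r$ exposes the subpath $C_1$--$u$--$d$ on $C^L_{r-1}$, after which ordering $d$ or $C_1$ next yields Category D or F respectively (Figures \ref{CL-2}, \ref{CL-3}); if $a$ is not eligible but $C_1$ is, the boundary then contains $a$--$u$--$d$--$c$ and $u$ is eligible at once because $b$ has already been ordered (opportunistically, via the $V^1\setminus L$ branch), giving Category A (Figure \ref{CL-4}); if only $d$ is eligible, the subpath $a$--$u$--$C_1$ gives Category B or C (Figure \ref{CL-5}). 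In particular the early placement of $u$ in Categories A--C, which you correctly identified as the delicate point, is certified by this boundary-cycle geometry together with $u$'s adjacency to all of $a,d,C_1,b$, not by a stall-freeness meta-argument; to repair your proof you would need to replace the monotonicity step with exactly this kind of embedding-based case analysis.
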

 \begin{proof}
For the generated graph $G^1_L$ containing the $modified$ subgraph $K_L$, we need to examine the canonical priority ordering for the set \{$a, C_1, d, u$\} from Categories A to F individually. The only remaining vertices available for ordering are from the set \{$a, C_1, d, u$\} (see Figure \ref{CL-1}(a-b)), while the vertices $v_n,......, v_{r+1}$ have already been canonically ordered in $G^1_L$. Therefore, the next possible vertex in $G^1_L$ that can be assigned the label $v_r$ must come from the set \{$a, C_1, d, u$\}.
\begin{enumerate}
    \item  See Figure \ref{CL-1}c: At $r$th step (Step 11 in Algorithm \ref{L-shaped}): vertex $u$ cannot initially be ordered as $v_r$ from the set \{$a, C_1, d, u$\} because $u$ has a degree of 4 and is adjacent to vertices $a$, $d$, $b$, and $C_1$. Among these, at most one vertex, i.e., $b$, could have already been ordered prior to ordering the vertices in the set \{$a, C_1, d, u$\} (here, "already ordered" refers to vertices that are canonical ordered earlier before ordering \{$a, C_1, d, u$\}). This means $visited(u)$ $<$ 2, which implies that it is not possible to order $u$ as $v_r$. \textbf{Therefore, a possible vertex for ordering as $v_r$ must come from the set \{$a, d, C_1$\}, excluding $u$.}\\
    \item  \textbf{Case-1:} Assume that the vertex $a$ can be canonical order as $v_r$ in $G^1_L$ (chosen from the set $\{a, d, C_1\}$) i.e., $visited(a)$ $\geq 2$ and $chord(a)$ $=$ $0$. \textbf{Canonical order $a$ as $v_r$ (see Figure \ref{CL-2}a)}.\\
    After ordering $a$ as $v_r$, the remaining graph $G_{L_{r-1}}^1$ ($G_{L_{r-1}}^1$ $=$ $G^1_L$ - \{$v_n$,...., $v_{r+1}$\}) has a component $C_{r-1}^L$ (\textbf{$C_{r-1}^L$} refers: the boundary of the exterior face of $G_{L_{r-1}}^1$ is cycle $C_{r-1}^L$ including the edge ($v_1$, $v_2$)) that contains a path that includes the subpath $C_1-u-d$ (see Figure \ref{CL-2}a). This implies that we can now either order $d$ or $C_1$ as $v_{r-1}$. Let us assume we \textbf{order $d$ as $v_{r-1}$: see Figure \ref{CL-2}b} (\textbf{if we label $C_1$ as $v_{r-1}$ then there exists Category-F canonical ordering: see Figure \ref{CL-3}(a-f)}). After that, the remaining graph $G^1_{L_{r-2}}$ has a component $C_{r-2}^L$ that contains a path that includes a subpath $C_1-u-b$ (see Figure \ref{CL-2}b). This implies that we can now canonical order $u$ as $v_{r-2}$ because $chord(u)$ $=$ $0$ and $visited(u)$ $\geq 2$ (since $d$ and $a$ are already canonical ordered). So \textbf{Canonical order $u$ as $v_{r-2}$: see Figure \ref{CL-2}c}. After that, ordering $C_1$ or $b$ is trivial (see Figure \ref{CL-2}(d,e)). \textbf{Hence, there exists a Category-D canonical ordering of set \{$a, C_1, d, u$\} with respect to the graph $G^1_L$}. \\
       \begin{figure}[H]
   \centering
 \includegraphics[width=0.95\textwidth]{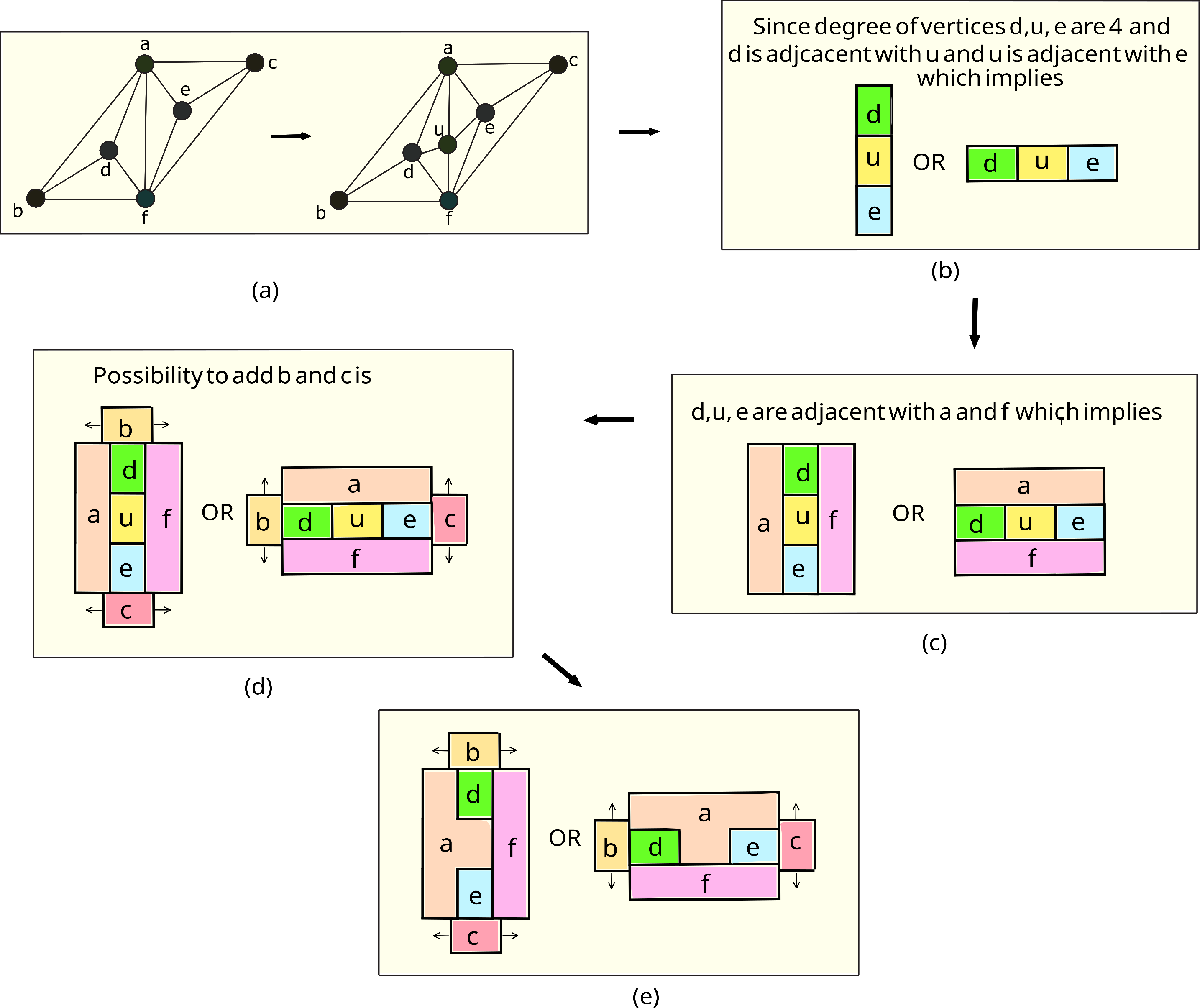}
    \caption{ (a-e) Existence of $T$-shaped module within floor plan corresponding to $K_T$.}
   \label{CT-1}
 \end{figure}
    \item \textbf{Case-2:} Assume that the vertex $a$ cannot be canonical order as $v_r$ in $G^1_L$. \textbf{Then the possible vertices that can be canonical order as $v_r$ in $G^1_L$ are either $C_1$ or $d$.}
    \begin{enumerate}
        \item [(a).] \textbf{Subcase 2(i):} Assume that the vertex $C_1$ can be canonical order as $v_r$ in $G^1_L$ (chosen from the set $\{C_1, d\}$) i.e., $visited(C_1)$ $\geq 2$ and $chord(C_1)$ $=$ $0$. \textbf{Canonical order $C_1$ as $v_r$: see Figure \ref{CL-4}a}.\\
        After ordering $C_1$ as $v_r$, the remaining graph $G^1_{L_{r-1}}$ has a component $C_{r-1}^L$ that contains a path that must includes the subpath $a-u-d-c$ (see Figure \ref{CL-4}a). This suggests that we can easily label $u$ as $v_{r-1}$ because $chord(u)$ $=$ $0$ and $visited(u)$ $\geq 2$ (since $C_1$ and $b$ are already canonical ordered). So \textbf{Label $u$ as $v_{r-1}$: see Figure \ref{CL-4}b}.  After that, the remaining graph $G^1_{L_{r-2}}$ has a component $C_{r-2}^L$ that contains a path that must include a subpath $c-d-a$ (see Figure \ref{CL-4}b). This implies that we can now label $d$ as $v_{r-2}$ because $chord(d)$ $=$ $0$ and $visited(d)$ $\geq 2$ (since $u$ and $b$ are already canonical ordered). So \textbf{Canonical order $d$ as $v_{r-2}$: see Figure \ref{CL-4}c}. After that, ordering $a$ or $c$ is trivial (see Figure \ref{CL-4}(d-e)). \textbf{Hence, there exists a Category-A canonical ordering of set \{$a, C_1, d, u$\} with respect to the graph $G^1_L$}. \\
         \item [(b).] \textbf{Subcase 2(ii):}  Assume that the vertex $C_1$ cannot be canonical order as $v_r$, which implies that only possible vertex for ordering as $v_r$ is $d$, i.e., $visited(d)$ $\geq 2$ and $chord(d)$ $=$ $0$. \textbf{Canonical order $d$ as $v_r$: see Figure \ref{CL-5}a}. After ordering $d$ as $v_r$, the remaining graph $G^1_{L_{r-1}}$ has a component $C_{r-1}^L$ that contains a path that must includes the subpath $a-u-C_1$ (see Figure \ref{CL-5}a). This suggests that we can easily label $u$ as $v_{r-1}$ because $chord(u)$ $=$ $0$ and $visited(u)$ $\geq 2$ (since $d$ and $b$ are already canonical ordered). So \textbf{Canonical order $u$ as $v_{r-1}$: see Figure \ref{CL-5}b}.  After that, the remaining graph $G^1_{L_{r-2}}$ has a component $C_{r-2}^L$ that contains a path that must include a subpath $a-C_1$ (see Figure \ref{CL-5}b). This implies that we can now label $C_1$ as $v_{r-2}$ and then $a$  as $v_{r-3}$ (see Figure \ref{CL-5}c) or $a$  as $v_{r-2}$ and then $C_1$ as $v_{r-3}$ (see Figure \ref{CL-5}d). \textbf{Hence, there exists either Category-B or Category-C canonical ordering of set \{$a, C_1, d, u$\} with respect to the graph $G^1_L$}.
         
    \end{enumerate}
\end{enumerate}
Hence, for a graph $G^1_L$, which is generated from an input graph $G_L$ with interior subgraph $K_L$, there always exists a canonical priority ordering of the set \{$a, C_1, d, u$\}, which corresponds to any of the six possible categories: Category A-F using \textbf{steps 7 to 53 of Algorithm \ref{L-shaped}}.
\end{proof}
\qed

\begin{theorem}
   For a given $G_T(V, E)$  that includes at least one internal subgraph isomorphic to $K_T$, Algorithm \ref{TLabel} yields an orthogonal floor plan $F_T$ that necessarily contains a $T$-shaped module corresponding to the subgraph $K_T$. 
\end{theorem}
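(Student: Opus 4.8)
The plan is to follow the skeleton of the proof of Theorem~1, tracing Algorithm~\ref{TLabel} phase by phase and verifying that each construction is always available and that the concluding merge produces a genuine $T$. First I would handle Steps~1--4: applying the complex-triangle removal procedure of \cite{roy2001proof}, every complex triangle of $G_T$ other than the designated interior $K_T$ is destroyed by subdividing one of its edges (the edges of $S_T$, with the inserted vertices forming $Enodes_T$), leaving a plane triangulation whose only complex configuration is $K_T$. Next I would treat Steps~5--6: subdividing the shared internal edge $(a,c)$ of $K_T$ by the new vertex $u$ adjacent to $\{a,c,e,f\}$ eliminates the last complex triangle while keeping the graph triangulated, and the Four-Completion of \cite{kant1997regular} together with the auxiliary edge $(N,S)$ lifts it to a $4$-connected plane triangulated graph $G^1_T$. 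As in the $L$-case, I would argue this upgrade is always possible.

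The key simplification relative to Theorem~1 is that here no priority ordering is needed. Since $G^1_T$ is $4$-connected, the canonical-ordering result of \cite{kant1997regular} supplies an ordering $G^2_T$ directly (Steps~7--17); the REL routine then yields $G^3_T$ (Step~18) and hence a rectangular floor plan $F'_T$ (Step~19). The entire burden of the theorem therefore falls on the merge phase (Steps~20--24): after reabsorbing the auxiliary modules of $Enodes_T$ into their partners, one merges $u$ with either $a$ or $c$, and I must show this recreates the $T$-shaped module of $K_T$.

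The hard part will be proving that the merge yields a $T$ rather than a plain rectangle or an $L$. I would first note that $u$ is an interior vertex of degree~$4$ whose neighbours appear cyclically as $a,e,c,f$, because $e$ and $f$ are the apexes of the two triangles flanking the subdivided edge $(a,c)$; the REL condition then forces the two edges to $a,c$ to carry one label and the two edges to $e,f$ the complementary one, so the cell of $u$ is sandwiched between the cells of $a$ and $c$ along one axis and between $e$ and $f$ along the other. Merging $u$ into $a$ extends that module as a stem into the vacated cell, flanked on both sides by $e$ and $f$; it is precisely this two-sided flanking, guaranteed by having subdivided the \emph{shared} edge $(a,c)$, that distinguishes the $T$ from the single-sided overhang of an $L$. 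The genuine obstacle is to certify the overhang rigorously, namely that $a$ (being a vertex of both complex triangles of $K_T$, hence adjacent to the interior structure on both sides) spans strictly beyond $u$ along the $e$--$f$ axis; I would establish this from the floor-plan geometry, using the configuration catalogued in Figure~\ref{CT-1} to confirm that at least one of the two admissible merges always realises the overhang. This yields the orthogonal floor plan $F_T$ with a $T$-shaped module corresponding to the interior $K_T$, completing the argument.
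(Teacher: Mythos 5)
Your outline of phases 1--5 matches the paper's proof, and your REL observation at $u$ is sound: an interior degree-$4$ vertex has exactly one edge in each of the four label groups, so with cyclic neighbours $a,e,c,f$ the edges to $a,c$ get one label and those to $e,f$ the other, and $u$'s rectangle has $a,c$ on opposite sides and $e,f$ on the other pair. But at the crux you stop: you concede that the two-sided overhang is "the genuine obstacle" and propose to "confirm" it from the configurations in Figure~\ref{CT-1}, weakening the claim to "at least one of the two admissible merges" working. That is a genuine gap, because the facts you have actually established do not force the overhang. They are fully consistent with the following configuration: $a$, $u$, $c$ stacked with identical cross-section, flanked by two tall rectangles $e$ and $f$ running alongside the whole stack --- every adjacency you derived holds, yet $a\cup u$ and $c\cup u$ are both plain rectangles, so \emph{both} merges fail. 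What excludes this, and what the paper's proof (tersely) invokes, is that the two apexes are \emph{also} interior degree-$4$ vertices of $G^1_T$: each had degree $3$ inside its $K_4$ and gained only the neighbour $u$. A degree-$4$ module in the rectangular dual has exactly one neighbour occupying each entire side, so the side of $e$ facing $u$ must coincide exactly with $u$'s side (a tall $e$ would meet $a$, $u$ and $c$ along one side, forcing degree at least $6$), and likewise for $f$. Hence the chain apex--$u$--apex (the paper's "$d$, $u$ and $e$") forms a strip of three rectangles with equal cross-section --- the paper's "two distinct configurations," horizontal or vertical. Since $a$ and $c$ are each adjacent to all three strip modules, while the outer sides of the apexes are monopolized by $b$ and $d$, module $a$ (and symmetrically $c$) must fully cover one long side of the strip, which gives the strict overhang on \emph{both} flanks of $u$.

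Note also that your hedge "at least one of the two merges" would not suffice even if proved: the merge step of Algorithm~\ref{TLabel} simply performs $M(u,a,F'_T)$ or $M(u,c,F'_T)$ with no computed selector (unlike the $m$ flag returned by Algorithm~\ref{RELF} in the $L$-case), so correctness of the algorithm as written requires that \emph{both} merges yield a $T$, or else a decision rule must be supplied. The strip argument delivers exactly this stronger conclusion, which is why the paper needs no analogue of $m$ here. Everything else in your proposal --- the removal of extraneous complex triangles, the subdivision of the shared edge $(a,c)$, four-completion plus $(N,S)$, the unprioritized canonical ordering, and the REL/RFP constructions --- coincides with the paper's argument; only the apex-degree observation, which is the theorem's real content, is missing.
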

\begin{proof}
Consider a triangulated graph $G_T$ that contains an internal complex triangle $K_T$. 
We aim to create a $T$-shaped module within the floor plan $F_T$ for the input graph $G_T$. For this purpose, a rectangular floor plan with an additional module, referred to as $u$, is needed. Creating a module of the $T$ shape requires merging module $u$ with either module $a$ or module $c$. To facilitate this process, we begin by considering a graph $G_T$ that includes an interior $K_T$ subgraph. We then modify $G_T$ by introducing two additional vertices, $u$ and $v$, thereby subdividing an interior edge of $K_T$. This modification yields a 4-connected triangulated graph, denoted as $G^1_T$, which is a critical prerequisite for establishing a canonical ordering. The transformation procedure further entails the elimination of any remaining complex triangles, the execution of 4-completion, and the insertion of an auxiliary edge $(N, S)$.
Next, we will assign a canonical ordering to the vertices of $G^1_L$ and then forward for REL generation and then using this to generate a rectangular floor plan (RFP) and lastly merge modules in RFP to generate $F_T$, which includes a $T$-shaped module.\\
We aim to show that a $T$-shaped module, representing the interior part $K_T$ of $G_T$, can always be constructed inside the floor plan $F_T$ of the graph $G_T$ by following our proposed Algorithm \ref{TLabel}. To prove this, we will go through each step of Algorithm \ref{TLabel} one by one, explaining how it works and confirming that it successfully builds the desired module.

\begin{enumerate}
    \item \textbf{Steps 1 to 4 (Algorithm \ref{TLabel}):}  Given an input graph $G_T$ that contains at least one $K_T$ as a subgraph, we first check the existence of any complex triangles other than $K_T$ (using the proposed algorithm in \cite{roy2001proof}). If such complex triangles exist in $G_T$ we apply the Complex Triangle Removal algorithm (proposed in  \cite{roy2001proof}), which introduces additional vertices and edges to eliminate them. This process ensures that all complex triangles (except $K_T$), are removed in $G_T$: see Figure \ref{T-6} (a-d). As a result, we obtain a modified graph $G_T$ that is free of complex triangles other than $K_T$.\\
    \item \textbf{Steps 5 to 6 (Algorithm \ref{TLabel}):} In order to modify the remaining $K_T$ subgraph, we first replace the internal edge $(a, c)$ of $K_T$  by inserting a new vertex $u$. To ensure that the updated graph remains triangulated, additional edges are introduced, as depicted in Figure 1. We denote this modified graph as $G^1_T$. Next, we apply the Four-Completion algorithm, as described in \cite{kant1997regular}, to $G^1_T$. At the final stage, we add an edge between vertex $N$ and $S$  to form $G^1_T$ (4-connected graph): see Figure \ref{T-6} (e-g). Thus, a 4-connected graph $G^1_T$ can always be constructed from an input graph $G^1_T$. \\
     \item \textbf{Steps 7 to 17 (Algorithm \ref{TLabel}):} Starting from the 4-connected PTG $G^1_T$ obtained earlier, we proceed with \textbf{Steps (7–9)} to initially assign $chord$ $value$ equal to $0$ and $status$ $value$ equal $False$ for each vertex in $G^1_T$. After that, mark vertex $W$ as $v_1$ and vertex $S$ as $v_2$, and set the visited value for vertex $E$ as $1$. After that, using \textbf{Steps (10–17)}, we assign canonical orders to all vertices of $G^1_T$ one by one using the method proposed in \cite{kant1997regular} and return the canonical ordered graph $G^2_T$: see Figures \ref{T-7}, \ref{T-8}. Hence, we can always construct a canonical ordered graph $G^2_T$ for a 4-connected PTG $G^1_T$.\\
     
    \item   \textbf{Step 18 (Algorithm \ref{TLabel}):} Using the canonical ordered graph $G^2_T$ generated earlier, we now proceed with Algorithm discussed in \cite{kant1997regular} to construct a regular edge labeling (REL, i.e., $T_1$ or $T_2$) of $G^2_T$, i.e., $G^3_T$: see Figure \ref{T-9}. Therefore, a REL $G^3_T$ can always be generated from the canonical ordered graph $G^2_T$.\\
  \item \textbf{Steps 19 to 24 (Algorithm \ref{TLabel}):}  Based on the previously generated REL $G^3_T$ of $G^2_T$, our next step is to design a rectangular floor plan (RFP) $F'_T$ using the $T_1$ and $T_2$ direction, following the algorithm outlined in \cite{kant1997regular}. Once the RFP $F'_T$ is obtained, we will apply the function $Merge$ $Rooms$. This function takes the following inputs: the canonical ordered graph $G^2_T$, the generated RFP $F'_T$, and the set of extra nodes ($Enodes_T$: which are additional vertices introduced in $G_T$ to eliminate complex triangles, excluding $K_T$). To construct a  $T$-shape module within the floor plan $F_T$, we have to merge the extra module $u$ with either $a$ or $b$ so that either module $a$ becomes a  $T$-shaped or $c$ becomes a $T$-shaped (See Figure \ref{CT-1}: Since degree of vertex $d$, $u$ and $e$ is 4 in graph $G^1_T$ and $d$ is adjacent to $u$, and $u$ is also adjacent to $e$, these modules can be arranged in the floor plan in two distinct configurations. Further adding modules $a$, $f$, $b$ and, $c$ in the floor plan implies module $a$ will always be $T$-shaped.  Hence, we can always construct a $T$-shaped module either by merging module $u$ with $a$ or by merging module $u$ with $c$). \\
  \begin{enumerate}
      \item [(a.)] \textbf{Steps 20 to 22 (Algorithm \ref{TLabel}):} Now, we perform the merging of additional modules in $F'_T$ using the set $Enodes_T$. Specifically, for each vertex in $Enodes_T$, the corresponding module in $F'_T$ (i.e., the module associated with $u_i$) is merged with either module $a_i$ or $b_i$ within $F'_T$: see Figure \ref{T-10} (a-b).\\
      \item [(b.)] \textbf{Steps 23 to 24 (Algorithm \ref{TLabel}):}  For the modified  $K_T$ vertices, the extra module $u$ will merge with the module $a$ in $F'_L$ or can merge with the module $c$ in $F'_L$. Ultimately, this process results in an orthogonal floor plan $F_T$ containing a $T$-shaped module corresponding to the subgraph $K_T$: see Figure \ref{T-10} (c).\\
  \end{enumerate}
  \end{enumerate}
\textbf{Thus, for PTG $G_T(V,E)$ that consists of a minimum one interior $K_T$, the Algorithm \ref{TLabel} always produces a module of the $T$-shape (corresponding to the interior $K_T$) within an orthogonal floor plan $F_T$.}
\end{proof}
\qed

\section{Time Complexity Analysis in Algorithms}
\label{Time}
\begin{theorem}
Given $G_L(V, E)$ that includes at least one internal subgraph isomorphic to $K_L$, Algorithm \ref{L-shaped} yields an orthogonal floor plan $F_L$, ensuring the inclusion of $L$‑shaped module (corresponds to the subgraph $K_L$) in linear time, i.e., $\textbf{O(n)}$.
\end{theorem}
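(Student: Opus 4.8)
The plan is to prove the $O(n)$ bound by a phase-by-phase accounting of Algorithm \ref{L-shaped}, showing that each of its six stages runs in linear time and that their sequential composition therefore stays linear. Since correctness is already settled by the preceding theorem, here I only need to bound the running time. I would fix $n = |V(G_L)|$ and first record the structural facts that drive every estimate: each intermediate graph ($G^1_L$, $G^2_L$, $G^3_L$) has $O(n)$ vertices and, being planar and triangulated, $O(n)$ edges with $\sum_v \deg(v) = O(n)$. This degree-sum bound is precisely what lets the per-vertex neighbor-update work aggregate to linear rather than quadratic.

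First I would dispatch Steps 1--6. The complex-triangle identification and removal of Roy et al.~\cite{roy2001proof} runs in $O(n)$ and inserts at most $O(n)$ auxiliary vertices; forming $\{C_1\}$ and subdividing $(a,b)$ by $u$ with the incident-edge insertions (Step 6) costs $O(\deg(a)+\deg(b)) = O(n)$ and is otherwise $O(1)$; the four-completion procedure of Kant and He~\cite{kant1997regular} together with the insertion of $(N,S)$ is $O(n)$. The initialization in Step 7 touches each vertex once, so it is $O(n)$, and the bulk ordering pass over the free vertices (Steps 8--28) orders each such vertex exactly once while the updates to $vi(\cdot)$ and $ch(\cdot)$ in Steps 26--28 amortize against the degree sum, giving $O(n)$ exactly as in the standard linear-time canonical ordering of~\cite{kant1997regular}.

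The key step --- and the one I expect to be the main obstacle --- is arguing that the priority branching in the function $Types$ $of$ $Priority$ $label_L$ (Steps 29--53), which may invoke $CanonLabel$ once for each of the six categories A--F, does not inflate the complexity beyond linear. The crucial observation I would make is that the expensive global ordering is performed only once (producing $G^{cl}_{L1}$), and every category trial resumes from this fixed intermediate state and must order only the constant-size set $\{a, C_1, d, u\}$. Hence each of the at most six trials performs $O(1)$ ordering steps whose neighbor updates cost at most $O(\deg(a)+\deg(C_1)+\deg(d)+\deg(u))$, and any reset of status between failed trials restores only these boundedly many vertices. Multiplying a constant number of trials by this $O(n)$ per-trial bound keeps the whole priority-ordering phase in $O(n)$; I would emphasize that it is the separation of the global pass from the constant-size branched pass that prevents the branching from compounding.

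Finally I would bound the remaining stages. The REL construction of Algorithm \ref{RELF} (Step 54) consists of two passes over the vertices computing $b_k$, $C_k$, and $R_k$, whose total cost is again controlled by the degree sum and is $O(n)$, with the case analysis in Steps 11--23 (including the possible edge flips that fix the value of $m$) adding only $O(1)$ work; the rectangular floor plan construction from the REL (Step 55) is $O(n)$ by Bhasker and Sahni~\cite{bhasker1988linear}; and the $Merge$ $Rooms$ routine (Steps 56--63) iterates over the $O(n)$ entries of $Enodes_L$, performing a constant-time merge for each, followed by a single final merge of $u$, for $O(n)$. Summing the six $O(n)$ contributions yields an overall running time of $O(n)$, establishing the claim.
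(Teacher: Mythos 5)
Your proposal is correct and follows essentially the same route as the paper: a phase-by-phase accounting in which Steps 1--6, the canonical-ordering pass, the six category trials (each one call to $CanonLabel$ at $O(n)$, hence $6\,O(n)$ in total), the REL construction, the floor-plan construction of Bhasker--Sahni, and the $Merge$ $Rooms$ routine are each bounded by $O(n)$ and summed. One small misdescription to fix: a category trial does \emph{not} order only the constant-size set $\{a, C_1, d, u\}$ --- by the algorithm's own structure (and the paper's ``Point A,'' where vertices such as $1$, $6$, $13$ remain unordered after the global pass), each trial must continue ordering all remaining vertices down to index $3$ before $r^{*} = 3$ can be checked, and a reset after a late failure may touch $\Theta(n)$ vertices; your fallback bound of $O(n)$ per trial absorbs both effects, so the conclusion is unaffected and agrees with the paper's $6\,O(n)$ estimate.
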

\begin{proof}
$G_L$ that includes at least one internal subgraph isomorphic to $K_L$, Algorithm \ref{L-shaped} yields an orthogonal floor plan $F_L$ (corresponding to $G_L$) that necessarily contains a $L$‑shaped module corresponding to the subgraph $K_L$. We will demonstrate that this construction can be completed in linear time, $O(n)$.
We will provide a comprehensive examination of the computational complexity of Algorithm \ref{L-shaped}.
\begin{enumerate}
    \item [1.] \textbf{Steps (1-4): Algorithm \ref{L-shaped}} [$\bm{O(n)}$]: Given an input graph $G_L$ that contains at least one interior $K_L$ subgraph, we can identify any such interior $K_L$ within $G_L$ in $\bm{O(n)}$ time. This can be achieved by detecting a triangle formed by three vertices that include an interior vertex of degree 3, using the method described in \cite{johnson1975finding}. Once such a complex triangle ($K_4$) is found, we designate it as $K_L$, this process takes linear time, i.e., $O(n)$. The vertices of $K_L$ are ordered in counter-clockwise order as follows: first $a$, then $b$, followed by $c$, and finally $d$, which also requires only $\bm{O(1)}$ time.\\ If graph $G_L$ contains complex triangles apart from the selected $K_L$, we apply the method proposed in \cite{roy2001proof}, which operates in $\bm{O(n)}$ time, to eliminate these complex triangles by introducing new vertices and edges. This ensures that $G_L$ is free of all complex triangles except $K_L$. Therefore, the total computational complexity:  Steps 1 through 4 (Algorithm \ref{L-shaped}) is $
\bm{[O(n) + O(1) + O(n):  O(n)]}$, indicating that this preprocessing can be executed efficiently in linear time.\\
\item [2.] \textbf{Steps (5-6): Algorithm \ref{L-shaped}} [$\bm{O(n)}$]: To process the remaining $K_L$ subgraph within $G_L$, we begin by removing the exterior edge $(a,b)$ of the subgraph $K_L$, insertion of one vertex $u$, and insertion four new edges to ensure the graph is triangulated. This modification is achieved in constant time, i.e.,  $\bm{O(1)}$, since addition and deletion are constant-time operations. The resulting graph after this step is referred to as $G^1_L$. Subsequently, we apply the Four-completion algorithm (linear time algorithm proposed by Kant and He  \cite{kant1997regular}), which introduces four new vertices ordered as $E$, $N$, $S$, and $W$ in $G^1_L$, along with their associated new edges (for triangulation). This step is performed in linear time, $\bm{O(n)}$.
After completing this procedure, a new edge $(N, S)$ is added to $G^1_L$. This step takes constant time, as inserting an edge is an $\bm{O(1)}$ operation. As a result, the overall time complexity for Steps 5 to 6 in Algorithm \ref{L-shaped} is $\bm{[2O(1) + O(n):  O(n)]}$. This demonstrates that these steps can be carried out efficiently in linear time relative to the input size $n$.\\
\item [3.] \textbf{Steps (7-53): Algorithm \ref{L-shaped}} [$\bm{O(n)}$]:
Starting from the previously constructed 4-connected triangulated graph denoted as $G^1_L$, we proceed with Steps 7 through 53 of Algorithm \ref{L-shaped} to derive a canonically ordered graph $G^2_L$ of $G^1_L$. 
\begin{enumerate}
\item [(a.)] \textbf{Steps (7–28): Algorithm \ref{L-shaped}} [$\bm{O(n)}$]: 
Initially, every vertex in $G^1_L$ is canonically ordered using the function $CanonLabel$ (with the exception of the vertices in the set $\{a, b, c, C_1, d, u\}$), as described in \cite{kant1997regular}, which is linear time i.e., $\bm{O(n)}$.
This canonical ordering process proceeds until the only remaining vertices eligible for ordering are those in the set $\{a,b,c, C_1, d, u\}$. At this point, the labels are assigned to these vertices following a predetermined order of priority, which is divided into six separate categories (Category A-F).
\\
\item  [(b.)] \textbf{Steps (29–53): Algorithm \ref{L-shaped}} [$\bm{O(n)}$]: 
Next, we will try/check to label the vertices of the set \{$a, b, c, C_1, d, u$\} systematically with respect to the defined six priority ordering one by one. For every category, the ${CanonLabel}$ function is called, which requires time complexity of $\bm{O(n)}$, and a priority validation condition, denoted by ${PLabel}(L)$, which is performed in constant time, $\bm{O(1)}$ (Since checking condition require constant time only). If any ordering is found (out of Category A-F), the resulting ordered graph $G^2_L$ is then forwarded for the REL (Regular Edge Labeling) construction. Thus, the validation of the ordering for any specific category requires $\bm{O(n)}$ time. Since there are six categories to check/evaluate, this phase requires at most $\bm{6O(n)}$ time in total.\\
\end{enumerate}

Consequently, the total computational complexity for steps 7 through 53 is $\bm{[O(n) + 6O(n) + O(1) = O(n)]}$, which demonstrates that the prioritize canonical ordering process can be executed efficiently in linear time.\\
\item [4.] \textbf{Step 54: Algorithm \ref{L-shaped}} [$\bm{O(n)}$]: Starting with the canonical ordered graph $G^2_L$, the Algorithm \ref{RELF} is used to construct the regular edge labeling (REL i.e., $T_1$ and $T_2$) $G^3_L$ of $G^2_L$.\\
\begin{enumerate}
    \item [(a.)] \textbf{Steps (1–10): Algorithm \ref{RELF}} [$\bm{O(n)}$]:  Initially, the REL (i.e., $T_1$ and $T_2$) for the input graph $G^2_L$ is constructed following the approach detailed in \cite{bhasker1988linear}. This construction process operates in linear time, that is, $\bm{O(n)}$, as explained in \cite{bhasker1988linear}. Hence, steps 1–10 of Algorithm \ref{RELF} require linear time. \\
 \item [(b.)] \textbf{Steps (11–23): Algorithm \ref{RELF}} [$\bm{O(1)}$]: The next step involves selecting and executing the appropriate operation based on the REL generated above ($T_1$ and $T_2$). Specifically, the algorithm merges the vertex $u$ with the vertex $a$ or the vertex $b$ (by returning value $r$ $=$ $1$ or $2$), or it performs an edge flip followed by merging $u$ with $a$  (by returning value $r$ $=$ $1$). Each of these operations can be completed in constant time (Since checking the condition requires constant time and changing the direction of edges, i.e., $T_1$ or $T_2$, takes constant time), i.e.,  $\bm{O(1)}$.\\
\end{enumerate}
Therefore, the computational complexity for Step 45 of Algorithm \ref{L-shaped} is $\bm{[O(n) + O(1) = O(n)]}$, which is linear.\\
\item [5.] \textbf{Steps (55-63): Algorithm \ref{L-shaped}} [$\bm{O(n)}$]:\\
\begin{enumerate}
    \item [(a.)] \textbf{Step 55: Algorithm \ref{L-shaped}} [$\bm{O(n)}$]: Utilizing the regular edge labeling, i.e., $G^3_L$ (represented by $T_1$ and $T_2$), a rectangular floor plan $F'_L$ is constructed by employing the approach described in Bhasker and Sahni \cite{bhasker1988linear}, which operates with linear time complexity, $\bm{O(n)}$.\\
    \item [(b.)] \textbf{Steps (56-58): Algorithm \ref{L-shaped}} [$\bm{O(r)}$]:
    After that, we will merge additional modules within $F'_L$ by calling the function $Merge$ $Room$. For each vertex in the set $Enodes_L$ where the cardinality $r$ of $Enodes_L$ is less than the total number of vertices $n$ in $G^1_L$, the corresponding modules in $F'_L$ are merged as required. Since each merging operation can be performed in constant time, the cumulative time complexity for this step is $\bm{O(r)}$, which is less than $O(n)$.\\
   \item [(c.)] \textbf{Steps (59-63): Algorithm \ref{L-shaped}} [$\bm{O(1)}$]: Finally, for the vertices associated with $K_L$, the algorithm merges module $u$ with either $a$ or $b$, and this operation is performed in constant time, $\bm{O(1)}$.\\
\end{enumerate}
In summary, the overall computational complexity for steps 55 through 63 (Algorithm \ref{L-shaped}) is $\bm{O(r) + O(1) + O(n): O(n)}$, which simplifies to $\bm{O(n)}$, since $r < n$. This shows that these steps are executable efficiently in linear time with respect to the size of the input graph.\\
\end{enumerate}
Consequently, the time complexity of Algorithm \ref{L-shaped} is established as linear since each of its five core stages runs in $O(n)$ time. When combined, these yield a total of $\bm{(O(n) + O(n) + ..... + O(n))}$ (five times), which simplifies to an overall time complexity of $O(n)$. \textbf{Therefore, given $G_L(V, E)$  that includes at least one internal subgraph isomorphic to $K_L$, Algorithm \ref{L-shaped} yields an orthogonal floor plan $F_L$ that necessarily contains a $L$‑shaped module corresponding to the subgraph $K_L$ in linear time, i.e., $O(n)$, where $n$ is the number of vertices in $G_L$}. 

\end{proof}
\qed

\begin{theorem}
Given $G_T(V, E)$ that includes at least one internal subgraph isomorphic to $K_T$, Algorithm \ref{TLabel} yields an orthogonal floor plan $F_T$, ensuring the inclusion of $T$‑shaped module (corresponds to the subgraph $K_T$) in linear time, i.e., $\textbf{O(n)}$.
\end{theorem}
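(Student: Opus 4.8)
The plan is to mirror the linear-time argument already carried out for Algorithm \ref{L-shaped}, decomposing Algorithm \ref{TLabel} into its six consecutive phases and certifying that each executes in $O(n)$ time, so that the total cost is a constant number of $O(n)$ terms and is therefore $O(n)$. Throughout I would set $n = |V(G_T)|$ and observe that all auxiliary objects introduced during the construction (the subdivision vertex $u$, the directional vertices $E,W,S,N$, and the members of $Enodes_T$) increase the vertex count by at most a constant or by $|Enodes_T| < n$; hence every intermediate graph $G^1_T$, $G^2_T$, $G^3_T$ has size $\Theta(n)$ and per-phase linear bounds compose cleanly.

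First I would treat Steps 1--4 (identification and removal of all complex triangles except $K_T$). Detecting an interior $K_T$ reduces to locating two $K_4$ subgraphs sharing a common edge; using the triangle-enumeration technique of \cite{johnson1975finding} one can find triangles containing a degree-$3$ interior vertex in $O(n)$ time, and fixing the counter-clockwise labels $a,b,c,d,e,f$ costs $O(1)$. Removing every other complex triangle via the algorithm of \cite{roy2001proof} is $O(n)$, giving $O(n)$ for this phase. For Steps 5--6 the deletion of $(a,c)$, the insertion of $u$ with its four incident edges, and the later addition of $(N,S)$ are each $O(1)$, while the Four-Completion routine of \cite{kant1997regular} runs in $O(n)$; thus $G^1_T$ is produced in $O(n)$.

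Next I would bound the ordering and labeling phases. Steps 7--17 invoke the standard canonical-ordering procedure of \cite{kant1997regular} on the $4$-connected triangulated graph $G^1_T$; crucially, in contrast to Algorithm \ref{L-shaped} this phase carries \emph{no} priority-based category search, so it is a single linear-time sweep yielding $G^2_T$ in $O(n)$. Step 18 constructs the regular edge labeling $G^3_T$ by the method of \cite{kant1997regular} in $O(n)$, and Step 19 converts $G^3_T$ into the rectangular floor plan $F'_T$ through the coordinate-assignment scheme of \cite{bhasker1988linear}, again $O(n)$. Finally, the \emph{Merge Rooms} routine of Steps 20--24 performs $|Enodes_T| < n$ constant-time merges for the auxiliary modules plus one constant-time merge of $u$ with $a$ or $c$, costing $O(n)$ overall.

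The hard part will not be any single calculation but rather certifying the first phase: establishing that the compound structure $K_T$ (two $K_4$'s sharing an edge) can be \emph{located} in strictly linear time, since the naive search for an edge shared by two separating triangles is more delicate than the single-$K_4$ detection used for $K_L$. Once this is secured, summing the six phase costs gives $O(n)+O(n)+O(n)+O(n)+O(n)+O(n)=O(n)$, and combining this bound with the correctness result already established for Algorithm \ref{TLabel} shows that the algorithm returns the required $T$-shaped orthogonal floor plan $F_T$ in linear time, where $n$ is the number of vertices of $G_T$.
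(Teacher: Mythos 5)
Your proposal is correct and follows essentially the same route as the paper's proof: the identical phase decomposition (Steps 1--4, 5--6, 7--17, 18, 19--24), the same citations for each subroutine (\cite{johnson1975finding} for detection, \cite{roy2001proof} for complex-triangle removal, \cite{kant1997regular} for four-completion, canonical ordering and REL, \cite{bhasker1988linear} for the floor-plan construction), and the same $O(r)+O(1)$ accounting with $r=|Enodes_T|<n$ for the merging phase. The step you flag as delicate---locating the two edge-sharing $K_4$'s in linear time---is handled by the paper exactly as you suggest, by detecting triangles containing an interior degree-$3$ vertex via \cite{johnson1975finding} and checking for a shared edge, so no further argument is needed beyond what you give.
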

\begin{proof}
$G_T$ that includes at least one internal subgraph isomorphic to $K_L$, Algorithm \ref{TLabel} yields an orthogonal floor plan $F_T$ (corresponding to $G_T$) that necessarily contains a $T$‑shaped module corresponding to the subgraph $K_T$. We will demonstrate that this construction can be completed in linear time, $O(n)$.
We will provide a comprehensive examination of the computational complexity of Algorithm \ref{TLabel}.
\begin{enumerate}
    \item [1.] \textbf{Steps (1-4): Algorithm \ref{TLabel}} [$\bm{O(n)}$]: Given an input graph $G_T$ that contains at least one interior $K_T$ subgraph, we can identify any such interior $K_T$ within $G_T$ in $\bm{O(n)}$ time. This can be achieved by detecting two triangles sharing a common edge where each triangle is formed by three vertices that include an interior vertex of degree 3, using the method described in \cite{johnson1975finding}. Once such a complex triangle is found, we designate it as $K_T$, this process takes linear time, i.e., $O(n)$. The vertices of $K_T$ are ordered in counter-clockwise order as follows: first $a$, then $b$, followed by $c$, $d$, $e$ and finally $f$, which also requires only $\bm{O(1)}$ time.\\ If graph $G_T$ contains complex triangles apart from the selected $K_T$, we apply the method proposed in \cite{roy2001proof}, which operates in $\bm{O(n)}$ time, to eliminate these complex triangles by introducing new vertices and edges. This ensures that $G_T$ is free of all complex triangles except $K_T$. Therefore, the total computational complexity:  Steps 1 through 4 (Algorithm \ref{TLabel}) is $
\bm{[2 O(n) + O(1):  O(n)]}$, indicating that this preprocessing can be executed efficiently in linear time.\\
\item [2.] \textbf{Steps (5-6): Algorithm \ref{TLabel}} [$\bm{O(n)}$]: To process the remaining $K_T$ subgraph within $G_T$, we begin by removing the interior edge $(a,c)$ of the subgraph $K_T$, insertion of one vertex $u$, and insertion four new edges to ensure the graph is triangulated. This modification is achieved in constant time, i.e.,  $\bm{O(1)}$, since addition and deletion are constant-time operations. The resulting graph after this step is referred to as $G^1_T$. Subsequently, we apply the Four-completion algorithm (linear time algorithm proposed by G. Kant and X. He  \cite{kant1997regular}), which introduces four new vertices labeled as $E$, $N$, $S$, and $W$ in $G^1_T$, along with their associated new edges (for triangulation). This step is performed in linear time, $\bm{O(n)}$.
After completing this procedure, a new edge $(N, S)$ is added to $G^1_T$. This step takes constant time, as inserting an edge is an $\bm{O(1)}$ operation. As a result, the overall time complexity for Steps 5 to 9 (Algorithm \ref{TLabel}) is $\bm{[2 O(1) + O(n): O(n)]}$. This demonstrates that these steps can be carried out efficiently in linear time relative to the input size $n$.\\
\item [3.] \textbf{Steps (7-17): Algorithm \ref{TLabel}} [$\bm{O(n)}$]:
Starting from the 4-connected PTG $G^1_T$ obtained earlier, we proceed with \textbf{Steps (7–9)} to initially assign $chord$ $value$ equal to $0$ and $status$ $value$ equal to $False$ for each vertex in $G^1_T$ which requires constant time i.e, $\bm{O(1)}$.  After that, mark vertex $W$ as $v_1$ and vertex $S$ as $v_2$ and set the visited value for vertex $E$ as $1$, which also requires $\bm{O(1)}$. After that, using \textbf{Steps (10–17)}, we assign canonical order to all vertices of $G^1_T$ one by one using the method proposed in \cite{kant1997regular} which require linear time i,e., $\bm{O(n)}$, while ensuring the process is explained in Definition 4 (refer to the Terminology Section \ref{Preliminaries}) and return the canonical ordered graph $G^2_T$. 
\\
Consequently, the total computational complexity for steps 7 through 17 is $\bm{[2O(1) + O(n):  O(n)]}$, which demonstrates that the canonical ordering process can be executed efficiently in linear time.\\
\item [4.] \textbf{Step 18: Algorithm \ref{TLabel}} 
[$\bm{O(n)}$]: 
Using the generated canonical ordered graph $G^2_T$ generated earlier, we now proceed with the Algorithm discussed in \cite{kant1997regular} to construct a Regular Edge labeling (REL, i.e., $T_1$ or $T_2$) of $G^2_T$, i.e., $G^3_T$.  
This construction process operates in linear time, that is, $\bm{O(n)}$, as explained in \cite{kant1997regular}.\\
Consequently, the total computational complexity for step 18 is $\bm{[O(n)]}$, which is linear.\\
\item [5.] \textbf{Steps (19-24): Algorithm \ref{TLabel}} [$\bm{O(n)}$]:\\
\begin{enumerate}
    \item [(a.)] \textbf{Step 19: Algorithm \ref{TLabel}} [$\bm{O(n)}$]: Utilizing the regular edge labeling, i.e., $G^3_T$ (represented by $T_1$ and $T_2$), a rectangular floor plan $F'_T$ is constructed by employing the approach described in Bhasker and Sahni \cite{bhasker1988linear}, which operates with linear time complexity, $\bm{O(n)}$.\\
    \item [(b.)] \textbf{Steps (20-22): Algorithm \ref{TLabel}} [$\bm{O(r)}$]:
    After that, we will merge additional modules within $F'_T$ by calling the function $Merge$ $Room$. For each vertex in the set $Enodes_T$ where the cardinality $r$ of $Enodes_T$ is less than the total number of vertices $n$ in $G^1_T$, the corresponding modules in $F'_T$ are merged as required. Since each merging operation can be performed in constant time, the cumulative time complexity for this step is $\bm{O(r)}$, which is less than $\bm{O(n)}$.\\
   \item [(c.)] \textbf{Steps (23-24): Algorithm \ref{TLabel}} [$\bm{O(1)}$]: Finally, for the vertices associated with $K_T$, the algorithm merges module $u$ with either $a$ or $c$, and this operation is performed in constant time, $\bm{O(1)}$.\\
\end{enumerate}
In summary, the overall computational complexity for Steps 19 through 24 (Algorithm \ref{TLabel}) is $\bm{O(n) + O(1) + O(r): O(n)}$, which simplifies to [$\bm{O(n)}$], since $r < n$. This shows that these steps are executable efficiently in linear time with respect to the size of the input graph.
\end{enumerate}
Consequently, the time complexity of Algorithm \ref{TLabel} is established as linear since each of its five core stages runs in $O(n)$ time. When combined, these yield a total of $\bm{(O(n) + O(n) + ..... + O(n))}$ (five times), which simplifies to an overall time complexity of $O(n)$. \textbf{Therefore, for a given $G_T(V, E)$  that includes at least one internal subgraph isomorphic to $K_T$, Algorithm \ref{TLabel} yields an orthogonal floor plan $F_T$ that necessarily contains a $T$‑shaped module corresponding to the subgraph $K_T$ in linear time, i.e., $\textbf{O(n)}$, where $n$ is the number of vertices in $G_T$}. 

\end{proof}
\qed

\section{Conclusion and future work}\label{conclusion}
This paper presents a structured approach outlined in Algorithm \ref{L-shaped} and Algorithm \ref{TLabel} for embedding $L$-shaped and $T$-shaped modules within the final floor plans $F_L$ and $F_T$, respectively. Each construction begins with a specific input graph: a PTG $G_L$ containing at least one internal complex triangle ($K_L$) for the $L$-shaped case and a PTG $G_T$ with at least one internal subgraph ($K_T$) for the $T$-shaped case. The process unfolds in four key stages: first, the input PTG is transformed into a 4-connected triangulated graph by introducing auxiliary vertices and edges. Next, a canonical ordering is applied to this modified graph. Using the resulting order, a rectangular floor plan is constructed. Finally, the auxiliary modules, which represent the added vertices, are merged to form an orthogonal floor plan that incorporates either a $L$-shaped or $T$-shaped module corresponding to the original graph $G_L$ or $G_T$.\\
An important characteristic of our proposed work for the generation of $L$-shaped and $T$-shaped modules is their inherently non-trivial structure, meaning that the desired module (either $L$ or $T$-shaped) in the final floor plan cannot be reduced to simpler forms through shrinking or stretching walls of its modules. Ensuring this structural uniqueness relies significantly on the application of canonical ordering methods.\\
In our forthcoming research, we plan to broaden the scope of our present framework to incorporate additional non-rectangular module types, specifically plus-shaped, stair-shaped, and Z-shaped forms, into the floor planning process. This will involve determining the structural criteria that permit the inclusion of these complex shapes in both floor plans and their corresponding graphs, followed by the design of algorithms to facilitate their generation.
\section {Declarations}
\textbf{Competing Interests:}
The authors declare that there are no competing interests, financial or non-financial, directly or indirectly related to the work submitted for publication. No funds, grants, or other support were received during the preparation of this manuscript.\\
\textbf{Data availability:} Enquiries about data availability should be directed to the authors.

\bibliographystyle{unsrt}
\bibliography{mybib} 

@inproceedings{grason1971approach,
  title={An approach to computerized space planning using graph theory},
  author={Grason, John},
  booktitle={Proceedings of the 8th Design automation workshop},
  pages={170--178},
  year={1971}
}

@article{levin1964use,
  title={Use of graphs to decide the optimum layout of buildings},
  author={Levin, Peter Hirsch},
  journal={The Architects' Journal},
  volume={7},
  pages={809--815},
  year={1964}
}

@article{shekhawat2023automated,
  title={Automated generation of floorplans with non-rectangular rooms},
  author={Shekhawat, Krishnendra and Lohani, Rohit and Dasannacharya, Chirag and Bisht, Sumit and Rastogi, Sujay},
  journal={Graphical Models},
  volume={127},
  pages={101175},
  year={2023},
  publisher={Elsevier}
}

@article{kozminski1985rectangular,
  title={Rectangular duals of planar graphs},
  author={Ko{\'z}mi{\'n}ski, Krzysztof and Kinnen, Edwin},
  journal={Networks},
  volume={15},
  number={2},
  pages={145--157},
  year={1985},
  publisher={Wiley Online Library}
}

@article{bhasker1988linear,
  title={A linear algorithm to find a rectangular dual of a planar triangulated graph},
  author={Bhasker, Jayaram and Sahni, Sartaj},
  journal={Algorithmica},
  volume={3},
  number={2},
  pages={247--278},
  year={1988}
}

@article{he1999floor,
  title={On floor-plan of plane graphs},
  author={He, Xin},
  journal={SIAM Journal on Computing},
  volume={28},
  number={6},
  pages={2150--2167},
  year={1999},
  publisher={SIAM}
}

@article{he1995efficient,
  title={An efficient parallel algorithm for finding rectangular duals of plane triangular graphs},
  author={He, Xin},
  journal={Algorithmica},
  volume={13},
  number={6},
  pages={553--572},
  year={1995},
  publisher={Springer}
}

@article{kant1997regular,
  title={Regular edge labeling of 4-connected plane graphs and its applications in graph drawing problems},
  author={Kant, Goos and He, Xin},
  journal={Theoretical Computer Science},
  volume={172},
  number={1-2},
  pages={175--193},
  year={1997},
  publisher={Elsevier}
}

@article{wang2018customization,
  title={Customization and generation of floor plans based on graph transformations},
  author={Wang, Xiao-Yu and Yang, Yin and Zhang, Kang},
  journal={Automation in Construction},
  volume={94},
  pages={405--416},
  year={2018},
  publisher={Elsevier}
}

@article{nisztuk2019hybrid,
  title={Hybrid evolutionary algorithm applied to automated floor plan generation},
  author={Nisztuk, Maciej and Myszkowski, Pawe{\l} B},
  journal={International Journal of Architectural Computing},
  volume={17},
  number={3},
  pages={260--283},
  year={2019},
  publisher={SAGE Publications Sage UK: London, England}
}

@article{upasani2020automated,
  title={Automated generation of dimensioned rectangular floorplans},
  author={Upasani, Nitant and Shekhawat, Krishnendra and Sachdeva, Garv},
  journal={Automation in Construction},
  volume={113},
  pages={103149},
  year={2020},
  publisher={Elsevier}
}

@article{roy2001proof,
  title={Proof regarding the NP-completeness of the unweighted complex-triangle elimination (CTE) problem for general adjacency graphs},
  author={Roy, S and Bandyopadhyay, S and Maulik, U},
  journal={IEE Proceedings-Computers and Digital Techniques},
  volume={148},
  number={6},
  pages={238-244},
  year={2001},
  publisher={IET}
}

@article{duncan2012optimal,
  title={Optimal polygonal representation of planar graphs},
  author={Duncan, Christian A and Gansner, Emden R and Hu, YF and Kaufmann, Michael and Kobourov, Stephen G},
  journal={Algorithmica},
  volume={63},
  pages={672-691},
  year={2012},
  publisher={Springer}
}

@article{sun1993floorplanning ,
  title={ Floor-planning by graph dualization: L-shaped modules},
  author={Sun, Y. and Sarrafzadeh, M.},
  journal={Algorithmica},
  volume={10},
  pages={429-456},
  year={1993},
  publisher={springer},
  }

@inproceedings{alam2011linear,
  title={Linear-time algorithms for hole-free rectilinear proportional contact graph representations},
  author={Alam, Muhammad Jawaherul and Biedl, Therese and Felsner, Stefan and Gerasch, Andreas and Kaufmann, Michael and Kobourov, Stephen G},
  booktitle={Algorithms and Computation: 22nd International Symposium, ISAAC 2011, Yokohama, Japan, December 5-8, 2011. Proceedings 22},
  pages={281--291},
  year={2011},
  organization={Springer}
}

@article{wang2023automated,
  title={Automated building layout generation using deep learning and graph algorithms},
  author={Wang, Lufeng and Liu, Jiepeng and Zeng, Yan and Cheng, Guozhong and Hu, Huifeng and Hu, Jiahao and Huang, Xuesi},
  journal={Automation in Construction},
  volume={154},
  pages={105036},
  year={2023},
  publisher={Elsevier}
}

@article{kurowski2003simple,
  title={Simple and efficient floor-planning},
  author={Kurowski, Maciej},
  journal={Information processing letters},
  volume={86},
  number={3},
  pages={113--119},
  year={2003},
  publisher={Elsevier},
}

@article{hu2020graph2plan,
  title={Graph2plan: Learning floorplan generation from layout graphs},
  author={Hu, Ruizhen and Huang, Zeyu and Tang, Yuhan and Van Kaick, Oliver and Zhang, Hao and Huang, Hui},
  journal={ACM Transactions on Graphics (TOG)},
  volume={39},
  number={4},
  pages={118--1},
  year={2020},
  publisher={ACM New York, NY, USA}
}

@article{liu2024intelligent,
  title={Intelligent floor plan design of modular high-rise residential building based on graph-constrained generative adversarial networks},
  author={Liu, Jiepeng and Qiu, Zijin and Wang, Lufeng and Liu, Pengkun and Cheng, Guozhong and Chen, Yan},
  journal={Automation in Construction},
  volume={159},
  pages={105264},
  year={2024},
  publisher={Elsevier}
}

@incollection{tsukiyama1993algorithm,
  title={An algorithm to eliminate all complex triangles in a maximal planar graph for use in VLSI floorplan},
  author={Tsukiyama, Shuji and Koike, Keiichi and Shirakawa, Isao},
  booktitle={Algorithmic Aspects Of VLSI Layout},
  pages={321--335},
  year={1993},
  publisher={World Scientific}
}

@article{shi2020addressing,
  title={Addressing adjacency constraints in rectangular floor plans using Monte-Carlo Tree Search},
  author={Shi, Feng and Soman, Ranjith K and Han, Ji and Whyte, Jennifer K},
  journal={Automation in Construction},
  volume={115},
  pages={103187},
  year={2020},
  publisher={Elsevier}
}

@article{liao2003compact,
  title={Compact floor-planning via orderly spanning trees},
  author={Liao, Chien-Chih and Lu, Hsueh-I and Yen, Hsu-Chun},
  journal={Journal of Algorithms},
  volume={48},
  number={2},
  pages={441--451},
  year={2003},
  publisher={Elsevier}
}

@article{rinsma1988existence,
  title={Existence theorems for floor-plans},
  author={Rinsma, I},
  journal={Bulletin of the Australian Mathematical Society},
  volume={ 37},
  number={3},
  pages={473--475},
  year={1988},
  publisher={SAGE Publications Sage UK: London, England},
}

@article{johnson1975finding,
  title={Finding all the elementary circuits of a directed graph},
  author={Johnson, Donald B},
  journal={SIAM Journal on Computing},
  volume={4},
  number={1},
  pages={77--84},
  year={1975},
  publisher={SIAM}
}

@article{shekhawat2017rectilinear,
  title={Rectilinear Floor-plans},
  author={Shekhawat, K. and  Duarte,  Jose P.},
  journal={Communications in Computer and Information Science},
  volume={724},
  pages={395--411},
  year={2017},
  publisher={Springer},
}
\end{document}